\newcommand{\comment}[1]{\shadowbox{\footnotesize #1}}
\newcommand{\longcomment}[1]{\fbox{\begin{minipage}{\textwidth}\footnotesize
#1\end{minipage}}}
\renewcommand{\comment}[1]{}
\renewcommand{\longcomment}[1]{}
\def\<{\langle}\def\>{\rangle}
 \def\E{{\mathds{E}}}
 \def\tr{\, \textrm{\rm tr}\,}
\newcommand{\PP}[1]{\mathbb{P}_{#1}}
 \newtheorem{theorem}{Theorem}[section]
       \newtheorem{proposition}[theorem]{Proposition}
       \newtheorem{corollary}[theorem]{Corollary}
       \newtheorem{lemma}[theorem]{Lemma}
           \newtheorem{claim}[theorem]{Claim}
       \newtheorem{theoremA}{Proposition}
\theoremstyle{definition}
\newtheorem{definition}{Definition}[section]
       \newtheorem{remark}{Remark}[section]
\newtheorem{question}{ }[subsection]
\newcommand{\la}{\lambda}
\newcommand{\La}{\Lambda}
\newcommand{\RR}{\mathds{R}}
\newcommand{\CC}{\mathds{C}}
\newcommand{\NN}{\mathds{N}}
\newcommand{\mU}{{\bf U}}
\newcommand{\mX}{\mathbf{X}}
\newcommand{\mY}{\mathbf{Y}}
\newcommand{\mZ}{\mathbf{Z}}
\newcommand{\mS}{\mathbf{S}}
\newcommand{\mI}{\mathbf{I}}
\newcommand{\mR}{\mathbf{R}}
\newcommand{\mM}{{\rm M}}
\newcommand{\mO}{\mathbf{0}}
\numberwithin{equation}{section}
\newcommand{\nX}{\widetilde{\mX}}
\newcommand{\nY}{\widetilde{\mY}}
\newcommand{\nS}{\widetilde{\mS}}
\newcommand{\calI}{\mathcal{I}}
\newcommand{\calJ}{\mathcal{J}}
\newcommand{\calL}{\mathcal{L}}
\newcommand{\calK}{\mathcal{K}}
\newcommand{\HH}{\mathds{H}}
\newcommand{\mP}{\mathbf{P}}
\newcommand{\Var}{{\rm{Var}}}
\newcommand{\macierz}[1]{\left[\begin{matrix}#1\end{matrix}\right]}
\title{Meixner  matrix  ensembles}
\author{ W{\l}odzimierz Bryc} \thanks{Department of Mathematical Sciences,
University of Cincinnati, Cincinnati, OH 45221-0025, USA}
\author{ G\'erard Letac}\thanks{Laboratoire de Statistique et
Probabilit\'es, Universit\'e Paul Sabatier, 31062, Toulouse, France}
 \date{Created: June 2006. Revised: Sept 2010, and March 2011. Printed: \today \ file: \jobname.tex}
\keywords{Meixner laws; random projections; quadratic conditional moments; matrix ensembles; systems of PDEs; Jack polynomials}
\subjclass[2000]{60B20}
\newcommand{\JJJJ}{\mathbb{J}}
\begin{document}

 \maketitle
 \begin{abstract} We construct a family of matrix ensembles  that fits  Anshelevich's regression
 postulates for  ``Meixner laws on matrices", namely the distribution with an invariance property of $\mX$ when $\E(\mX^2|\mX+\mY)=a(\mX+\mY)^2+b(\mX+\mY)+c\mI_n$ where $\mX$ and $\mY$ are i.i.d. on symmetric matrices of order $n$.
We show that   the Laplace transform of a general  $n\times n$ Meixner matrix ensemble satisfies a
system of partial differential equations which is explicitly solvable for $n=2$.  We rely on these solutions to identify the
six types of    $2\times 2$ Meixner matrix ensembles.
 \end{abstract}

\section{Introduction}
The  classical  Meixner laws are the one dimensional binomial, Poisson, negative binomial, gamma,  normal, and
hyperbolic  Meixner laws that include the %
hyperbolic secant law. These laws make their appearance  as the
orthogonality measures of a certain family of orthogonal  polynomials \cite{Meixner-40}, as the laws characterized
by quadratic regression property \cite{Laha-Lukacs60}, and as the exponential families corresponding to the
quadratic variance functions \cite{Morris82}, see also  \cite{Feinsilver:1986,Ismail-May78,Letac:1989,Shanbhag:1972,Shanbhag:1979}. The Laha-Lukacs
\cite{Kagan-Linnik-Rao:1973,Laha-Lukacs60} characterization is the description of the possible distributions of a
square-integrable   random variable $X$ such that there exist real numbers $a,b,c$ satisfying
$\E(X^2|S)=aS^2+bS+c$ where $S=X+Y,$ where $Y\sim X$ and where $Y$ is independent of $X.$ Another convenient form
of this condition is to postulate the existence of real numbers $A,B,C$ such that
\begin{equation}\label{BT}\E((X-Y)^2|S)=AS^2+BS+C\end{equation} (use $(X-Y)^2=2X^2+2Y^2-S^2$ to see that
$(A,B,C)=(4a-1,4b,4c)$).

In 2006 M. Anshelevich \cite[pages 22 and 25]{Anshelevich-06}  observes that (\ref{BT}) makes sense when the real
random variable $X$ is replaced by a random square  matrix $\mX$. He then raises the question of defining
analogues of Meixner distributions on matrices. In other words, he asks for the matrix version of Laha-Lukacs
\cite{Laha-Lukacs60} result. This question on random matrices is so general that it is suitable to  restrict this
problem to random variables $\mX$ valued in the space $\HH_{n,1}$ of all symmetric matrices and  such that %
$U\mX U^*\sim \mX$ for  all matrices $U$ in the orthogonal group $\mathbb{O}(n)=\mathcal{K}_{n,1}.$  On the other
hand, it is natural to extend this simpler framework to the space $\HH_{n,2}$ of Hermitian complex matrices for
random variables $\mX$ such that $U\mX U^*\sim \mX$ for  all matrices $U$ in the unitary  group
$\mathbb{U}(n)=\mathcal{K}_{n,2} $ and even  to the space $\HH_{n,4}$ of Hermitian-quaternionic matrices, again
for random variables $\mX$ such that $U\mX U^*\sim \mX$ for  matrices $U$ in the symplectic group
$\mathbb{S}p(n)=\mathcal{K}_{n,4} $.  There are also connections with free probability (see \cite{Bozejko-Bryc-04} and Section \ref{Sect:CWFP} below), and connections with Jordan
algebras (see Section \ref{Sect7} below).

A probability law $\mu$ on $\HH_{n,\beta}$ (where $\beta=1,2$ or 4), or a random variable $\mX$ with law $\mu$, is %
 said to be rotation invariant if  the law of $U\mX U^*$ is also $\mu$ for all $U$ in $\mathcal{K}_{n,\beta}$. In such case, following a physicists tradition we shall call $\mu$ an ensemble.

\begin{definition}\label{Def1.1} We will say that the probability law $\mu$ on $\HH_{n,\beta}$ is a Meixner ensemble  with parameters $A,B,C\in\RR$ if $\mu$ is  rotation invariant, the second moments exist, and $\mu$ has the following property:
if $\mX,\mY$ are independent with the same law $\mu$ and   $\mS=\mX+\mY$,
then
\begin{equation}\label{pre-ME}
\E ((\mX-\mY)^2|\mS)=A \mS^2 + B\mS+C\mI_n\,.
\end{equation}
\end{definition}
This is a different concept of a Meixner matrix ensemble than the one  introduced in
\cite{chikuse1986multivariate} under the name  ``multivariate Meixner classes of invariant distributions of random
matrices".
In particular, we note that the oldest known ensemble, namely the Wishart distribution with mean proportional to $\mI_n$ is \textit{not}  a Meixner ensemble in the  sense of Definition \ref{Def1.1}, since if $\mX,\mY$ are i.i.d. with Wishart distribution, then there are constants $A,B$  such that
\begin{equation}\label{WW}
\E ((\mX-\mY)^2|\mS)= A\mS^2+B\mS\tr\mS,
\end{equation}
which is quadratic in $\mS$ but not of  the desired form (\ref{pre-ME}). For a proof of (\ref{WW}), see \cite[page 582]{Letac-Massam-98}.

To complete this introduction, let us make a number of simple remarks about the Meixner ensembles. It is easy to
check that if the law of  $\mX$ is  a Meixner ensemble with parameters $A,B,C$, then an affine transformation
$\widetilde \mX=\alpha\mX+t$ with real $\alpha,t$ is also Meixner, with parameters
$$
\widetilde A=A, \; \widetilde B=\alpha B-4At,\; \widetilde C=\alpha^2C+4At^2-2B\alpha t.
$$
In particular, since passing to $-\mX$ changes only the sign of $B$, without loss of generality we will consider
only $B\geq 0$. Suppose now that $\E(\mX)$ and  $\E(\mX^2)$ are given; in this case  the constants $A,B,C$ satisfy
an equation with coefficients that depend on  $\E(\mX)$, $\E(\mX^2)$. The equation is obtained from taking the
expected values of both sides of \eqref{pre-ME}. Finally, rotation invariance implies that both $\E(\mX)$ and
$\E(\mX^2)$ are constant multiples of $\mI_n$, so  in the non-degenerate case $\E(\mX^2)\neq 0$ we can normalize
the matrices so that $\E (\nX)=0$ and $\E (\nX^2)=\mI_n$, and express  relation
 \eqref{pre-ME}  in terms of two
real constants $a,b$  as
\begin{equation}
  \label{Meixner-Anshelevich}
  \E\big((\nX-\nY)^2\big|\nS\big)=\frac{2}{1+2a}\left(\mI_n +b\nS+a \nS^2\right)\,.
\end{equation}
In this normalized setting one can classify
 the Meixner ensembles into
\begin{enumerate}
  \item the elliptic case $b^2>4a$, which in Laha-Lukacs theorem on $\RR$ corresponds to negative binomial ($a>0$), Poisson ($a=0$), or binomial ($a<0$) laws.
  \item the parabolic case $b^2=4a$, which corresponds to gamma law ($a>0$) or Gaussian law ($a=0$)
  \item the hyperbolic case $b^2<4a$, which corresponds to the family of hyperbolic Meixner laws that include hyperbolic secant law.
\end{enumerate}
Of course,  for non-degenerate ensembles \eqref{pre-ME} and \eqref{Meixner-Anshelevich} are equivalent. If
\eqref{pre-ME} holds with constants $A\ne 1,B,C$, then the standardization $\nX$ of $\mX$ with mean $\mu\mI_n$ and
variance $\Var(\mX):=\E(\mX^2)-(\E(\mX))^2=\sigma^2\mI_n$ satisfies \eqref{Meixner-Anshelevich} with
\begin{equation}\label{ABC2ab}
a=\frac{A}{2(1-A)},\; b=\frac{B+4 A\mu}{2 \sigma(1-A)}.
\end{equation}

What we call  trivial Meixner ensembles are  random multiples of the identity, $\mX=\xi \mI_n$ where $\xi$ is
$\RR$-valued with one of the six Meixner laws. The aim of the paper is the study of non-trivial Meixner ensembles
for all $n$. We will be able to find all of them for $n=2$, we will give examples of Meixner ensembles for $n\geq
3$ and we will write  a system of $n$ linear partial  differential equations (PDEs)  of the second order satisfied by the
Laplace transform of a Meixner ensemble of given parameters $(a,b)$ as defined by \eqref{Meixner-Anshelevich}. Let
us emphasize the fact that several different ensembles correspond to one set of parameters $a,b$ in
\eqref{Meixner-Anshelevich}.

The zoo of the known Meixner ensembles includes obviously the Gaussian ensembles which are described by their cumulant transform $c_1\tr \theta+\frac{c_2}{2}(\tr \theta)^2+\frac{c_3}{2}\tr (\theta^2)$ with the familiar GOE, GUE and GSE (see
\cite{Anderson-Guionnet-Zeitouni:2010} or \cite{Mehta91}) corresponding to $c_3=1$ and $c_1=c_2=0$  among them.   The Gaussian ensembles    yield a family of Meixner
ensembles corresponding to $a=b=0$ in \eqref{Meixner-Anshelevich}, but the existence of non Gaussian ensembles for $a=b=0$ and $n\geq 3$ has not been proved or disproved. Another important example is a Bernoulli ensemble: if $\mu_m$ is the distribution of the projection on a
uniformly random $m$ dimensional space  of the Euclidean space of dimension $n$, a Bernoulli ensemble is any
mixing of $\mu_0,\ldots,\mu_n$; for $n=1$ we get back the ordinary Bernoulli distribution. The Bernoulli ensembles
can be seen as the only distributions of random projections $\mP$ such that $U\mP U^*\sim \mP$ for all $U\in
\mathcal{K}_{n,\beta}.$  The convolution of $N$ identical Bernoulli ensembles provides the binomial ensemble, and
randomizing $N$ by Poisson or negative binomial distribution yields the Poisson and the negative binomial
ensembles. The construction of the full family of Meixner ensembles with $b^2=4a$ or with $b^2<4a$ is available only for $n=2$ and their extension
to $n\geq 3$ is  a challenge.

 The paper is organized as follows.  In Section \ref{Sect_Laplace} we describe Laplace transforms of Meixner
ensembles. In Section \ref{Sect:Meixner_Ensemble} we give examples of Meixner ensembles.    In Section
\ref{Sect:PDE} we derive the system of  PDEs  that determine the Laplace transforms of  the general $n\times n$
Meixner ensembles. The general solution of this system is elusive for $n\geq 3$.   In  Section
\ref{Morris-Meixner} we use the system of PDEs to show  that under a natural integrability assumption the examples
of Meixner ensembles from Section \ref{Sect:Meixner_Ensemble} exhaust all $2\times 2$ Meixner ensembles. In
Section \ref{Sect_Add_Obs} we collected  additional material: Proposition \ref{C2.3}  answers in negative a
question raised  in  \cite[Remark 5.8]{Bozejko-Bryc-04} for $2\times 2$ random matrices with finite Laplace
transform. Proposition \ref{P-P1} gives a series expansion for a Laplace transform of a random projection which
can be used to derive (some)   solutions to the system of PDEs. Section \ref{Sect7} makes some comments about the
Jordan algebra context of the problem.

\section{Laplace transforms}\label{Sect_Laplace}

In this section we    rewrite the %
Meixner property \eqref{pre-ME} in terms of the
Laplace transform.
We will work simultaneously with symmetric, unitary and symplectic ensembles, thus with probability laws on  $\HH_{n,\beta}$ which are invariant under the action $X\mapsto UXU^*$ where $U$ is in the group $\mathcal{K}_{n,\beta}.$
We call  $\beta=1,2,4$ the Peirce constant    (see \cite[page 97]{Faraut-Koranyi-94}, where $\beta$ is denoted as $d$  and $n$ is denoted by $r$). The space
$\HH_{n,\beta}$ is
 a real vector space equipped with the Euclidean
structure defined by the real inner product
$$ (x,y)\mapsto \langle x|y\rangle:=\Re(\tr(xy))$$
 (the real part of the trace is needed  for the symplectic case $\beta=4$ only).
For an $\HH_{n,\beta}$-valued random variable $\mX$, the Laplace transform
  is a mapping from  $\HH_{n,\beta}$ into $(0,\infty]$, defined by
$$L(\theta)=\E (\exp(\langle\theta|\mX\rangle)).$$
Let
\begin{equation}\label{Theta_mu}
\Theta_\mX =\mbox{int}\{\theta: L(\theta)<\infty\}.
\end{equation}
(We will also use notation $\Theta_\mu$ when $\mu$ is the law of random variable $\mX$.)
Throughout this paper we consider  only  random matrices $\mX$ with values in $\HH_{n,\beta}$ with non-empty
$\Theta_\mX$.

 We need a result of linear
algebra that we are not going to prove. Consider
the space $L_s(\HH_{n,\beta})$ of symmetric endomorphisms of the Euclidean space $\HH_{n,\beta}.$
To each $y\in \HH_{n,\beta}$ we associate the elements $y\otimes y$ and
$\PP{y}$ of $L_s(\HH_{n,\beta})$ defined respectively by
$$h\mapsto (y\otimes y)(h)=y\tr (yh),\ \ h\mapsto \PP{y}(h)=yhy\,.$$
They provide important examples of elements of $L_s(\HH_{n,\beta})$.
 Now, $L_s(\HH_{n,\beta})$ is itself a linear space, and the result
that we are going to admit as a black box is the following (see
\cite[Lemma 6.3]{Casalis-Letac-96} and \cite[Proposition
3.1]{Letac-Massam-98}
for a proof):

\begin{theoremA}
\label{P1.1}
 There exists a unique endomorphism $\Psi$
of $L_s(\HH_{n,\beta})$ such that for all $y\in \HH_{n,\beta}$ one
has $\Psi(y\otimes y)=\PP{y}.$ Furthermore
\begin{equation}\label{magie}\Psi(\PP{y})=\frac{\beta}{2}y\otimes
y+\left(1-\frac{\beta}{2}\right)\PP{y}\,.\end{equation}
\end{theoremA}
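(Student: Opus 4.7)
The plan has two separated parts: construct $\Psi$ and then verify the identity~\eqref{magie}.

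\emph{Construction and uniqueness.} The polarization identity
\[
x\otimes y+y\otimes x=(x+y)\otimes(x+y)-x\otimes x-y\otimes y
\]
shows that $\{y\otimes y:y\in\HH_{n,\beta}\}$ spans $L_s(\HH_{n,\beta})$; equivalently, under the canonical isomorphism $\mathrm{Sym}^2(\HH_{n,\beta})\cong L_s(\HH_{n,\beta})$ supplied by the Euclidean inner product, the pure squares map to the rank-one operators $y\otimes y$. Uniqueness of $\Psi$ is immediate. For existence I would observe that both
\[
\sigma:(x,y)\mapsto x\otimes y+y\otimes x\quad\text{and}\quad\pi:(x,y)\mapsto\bigl[h\mapsto xhy+yhx\bigr]
\]
are symmetric bilinear maps into $L_s(\HH_{n,\beta})$; for $\pi$, the target is indeed $L_s$ because cyclicity of $\Re\tr$ gives $\langle\pi(x,y)h\,|\,k\rangle=\langle h\,|\,xky+ykx\rangle$. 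Both maps factor through $\mathrm{Sym}^2(\HH_{n,\beta})$; the first factorization $\bar\sigma$ is onto by the spanning argument, hence a linear isomorphism by dimension count. Then $\Psi:=\bar\pi\circ\bar\sigma^{-1}$ is the required endomorphism, since $\sigma(y,y)=2\,y\otimes y$ while $\pi(y,y)=2\PP{y}$.

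\emph{Proof of~\eqref{magie}.} Fix an orthonormal basis $\{e_k\}$ of $\HH_{n,\beta}$ with respect to $\langle\cdot|\cdot\rangle$. Any $T\in L_s(\HH_{n,\beta})$ admits the symmetrized expansion
\[
T=\tfrac12\sum_k\bigl(T(e_k)\otimes e_k+e_k\otimes T(e_k)\bigr),
\]
obtained from $h=\sum_k\langle e_k|h\rangle e_k$ together with $\langle T(e_k)|h\rangle=\langle e_k|T(h)\rangle$. Applying this with $T=\PP{y}$ (so $T(e_k)=ye_ky$) and then applying $\Psi$ via the polarization $\Psi(a\otimes b+b\otimes a)=[h\mapsto ahb+bha]$ of the defining relation, one gets
\[
\Psi(\PP{y})(h)=\tfrac12\,y\Bigl(\sum_k e_k(yh)e_k\Bigr)+\tfrac12\Bigl(\sum_k e_k(hy)e_k\Bigr)y.
\]
The key step is the \emph{Peirce reproducing formula} for $\HH_{n,\beta}$: for any matrix $A$ in $M_n$ over the appropriate ground skew-field,
\[
\sum_k e_kAe_k=\bigl(1-\tfrac{\beta}{2}\bigr)A^{\sharp}+\tfrac{\beta}{2}\,\tr(A)\,\mI_n,
\]
where $\sharp$ denotes transpose for $\beta=1$, Hermitian adjoint for $\beta=2$, and quaternionic Hermitian adjoint for $\beta=4$. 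Applied to $A=yh$ and $A=hy$, and using $(yh)^{\sharp}=hy$ and $(hy)^{\sharp}=yh$ because $y,h\in\HH_{n,\beta}$, the two sums collapse to
\[
\Psi(\PP{y})(h)=\bigl(1-\tfrac{\beta}{2}\bigr)yhy+\tfrac{\beta}{2}\tr(yh)\,y=\bigl(1-\tfrac{\beta}{2}\bigr)\PP{y}(h)+\tfrac{\beta}{2}(y\otimes y)(h),
\]
which is exactly~\eqref{magie}.

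\emph{Main obstacle.} The substantive input is the reproducing formula. For $\beta=1$ it is a short computation using matrix units $E_{ii}$ and $(E_{ij}+E_{ji})/\sqrt{2}$; for $\beta=2$ the coefficient of $A^{\sharp}$ vanishes and the identity reduces to a Casimir-type calculation for the conjugation action of $\mathbb{U}(n)$; for $\beta=4$ one has to carry out the quaternionic analogue by hand. A case-free alternative is representation-theoretic: since $A\mapsto\sum_k e_kAe_k$ intertwines the conjugation action of $\mathcal{K}_{n,\beta}$ on $M_n$, Schur's lemma (applied to the decomposition of $M_n$ into $\id$, $A\mapsto\tr(A)\mI_n$, and $\sharp$-isotypic components) forces it to be a linear combination of those three operators, whose coefficients are then pinned down by evaluating at $A=\mI_n$ and at one off-diagonal test element.
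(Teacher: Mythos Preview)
The paper does not prove this statement; it is quoted as a black box with references to \cite[Lemma 6.3]{Casalis-Letac-96} and \cite[Proposition 3.1]{Letac-Massam-98}. So there is no in-paper proof to compare against, and your argument stands on its own. It is essentially correct.

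The existence/uniqueness part is clean. For the identity~\eqref{magie}, your route via the orthonormal-basis expansion $T=\tfrac12\sum_k(Te_k\otimes e_k+e_k\otimes Te_k)$ and the reproducing formula $\sum_k e_kAe_k=(1-\tfrac{\beta}{2})A^{\sharp}+\tfrac{\beta}{2}\tr(A)\,\mI_n$ is sound, with one caveat: in the quaternionic case $\beta=4$ the formula must be read with $\Re\tr$ in place of $\tr$. A check at $n=1$ makes this visible: the only basis element is $1$, so the left side is $A$ itself, while the right side with the bare quaternion trace gives $-\bar A+2A$, which equals $A$ only for real $A$; with $\Re\tr$ one gets $-\bar A+2\Re A=A$ as required. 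This is harmless for your application because the paper's own definition $(y\otimes y)(h)=y\tr(yh)$ must already be read as $\langle y|h\rangle\, y=\Re\tr(yh)\,y$ for the image to lie in $\HH_{n,4}$, and then the two halves of your final displayed line combine via cyclicity of $\Re\tr$ rather than of $\tr$ (the latter fails over the quaternions). You correctly flag the reproducing formula as the main obstacle and sketch both a direct basis computation and a Schur-lemma argument; the direct computation for each $\beta$ is routine once the basis is written down, whereas the representation-theoretic shortcut would need the isotypic decomposition of $M_n$ under $\mathcal{K}_{n,\beta}$-conjugation spelled out before it is fully rigorous.
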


 With this notation we have the
simple but  crucial result. 
(Note that in this result we could have dispensed with the  hypothesis of invariance  under rotations.)

\begin{proposition} \label{P1.2}
  Let $\mu$ be an ensemble on  $\HH_{n,\beta}$ such that its Laplace transform $L$ is finite on an open non empty set. The following are equivalent:
  \begin{enumerate}
\item The ensemble $\mu$  is  Meixner with real parameters $A,B,C$, {\em i.e.} \eqref{pre-ME} holds for $\textbf{X}\sim \mu.$
 \item
 With $k(\theta)=\ln L(\theta)$, we have
 \begin{equation}\label{pre-k-eqtn}
 2(1-A)\Psi(k''(\theta))(\mI_n)=4A (k'(\theta))^2+2Bk'(\theta)+C\mI_n
\end{equation}
for all $\theta\in\Theta_X$.
\end{enumerate}
\end{proposition}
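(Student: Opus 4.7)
The plan is to convert the conditional moment identity \eqref{pre-ME} into an identity of $\theta$-weighted matrix-valued expectations, evaluate each side using the independence of $\mX$ and $\mY$, and then reduce to a second-order PDE in $k=\ln L$. The first step is the reformulation: since $A\mS^2+B\mS+C\mI_n$ is $\mS$-measurable, identity \eqref{pre-ME} is equivalent, entry by entry, to
\begin{equation*}
\E\bigl[(\mX-\mY)^2\,e^{\langle\theta|\mS\rangle}\bigr]
=\E\bigl[(A\mS^2+B\mS+C\mI_n)\,e^{\langle\theta|\mS\rangle}\bigr]
\qquad(\theta\in\Theta_\mu),
\end{equation*}
by uniqueness of the Laplace transform of a finite signed measure on the open set $\Theta_\mu$.

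Next I evaluate both sides. Expanding $(\mX-\mY)^2=\mX^2+\mY^2-\mX\mY-\mY\mX$ and exploiting that $\mX,\mY$ are i.i.d., the left-hand side becomes
$$2L(\theta)\,\E[\mX^2 e^{\langle\theta|\mX\rangle}]-2(L'(\theta))^2.$$
Two identities are required: $L'(\theta)=\E[\mX\,e^{\langle\theta|\mX\rangle}]\in\HH_{n,\beta}$, and, crucially, $\E[\mX^2 e^{\langle\theta|\mX\rangle}]=\Psi(L''(\theta))(\mI_n)$. The latter follows from Proposition \ref{P1.1}: $L''(\theta)=\E[(\mX\otimes\mX)\,e^{\langle\theta|\mX\rangle}]$ as an element of $L_s(\HH_{n,\beta})$; linearity of $\Psi$ together with $\Psi(\mX\otimes\mX)=\PP{\mX}$ gives $\Psi(L''(\theta))=\E[\PP{\mX}\,e^{\langle\theta|\mX\rangle}]$, and evaluating at $\mI_n$ returns $\E[\mX^2 e^{\langle\theta|\mX\rangle}]$. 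The right-hand side, using the same device with $\mS$ (whose Laplace transform is $L_\mS=L^2$), becomes $A\,\Psi(L_\mS''(\theta))(\mI_n)+B\,L_\mS'(\theta)+C\,L_\mS(\theta)\mI_n$.

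The remainder is substitution. With $k=\ln L$, so $\ln L_\mS=2k$, elementary differentiation gives
$$L'=Lk',\quad L''=L(k'\otimes k'+k''),\quad L_\mS'=2L^2 k',\quad L_\mS''=L^2(4\,k'\otimes k'+2k'').$$
Applying $\Psi$ and evaluating at $\mI_n$ converts these into expressions in $(k')^2=\PP{k'}(\mI_n)$ and $\Psi(k'')(\mI_n)$. The $(k')^2$ terms on the left-hand side cancel, and after dividing by $L^2$ the identity simplifies exactly to \eqref{pre-k-eqtn}. The converse direction follows by reading the same chain of equivalences backwards.

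The only genuinely delicate step I anticipate is the first: one must handle the matrix-valued conditional expectation with some care, but this is a standard consequence of the uniqueness of Laplace transforms applied entry by entry on the open set $\Theta_\mu$. All subsequent computations are mechanical and effectively dictated by Proposition \ref{P1.1}.
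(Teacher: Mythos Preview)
Your proof is correct and follows essentially the same route as the paper's. The only organizational difference is that the paper first records the tensor-product identities $\E[(\mX-\mY)\otimes(\mX-\mY)\,e^{\langle\theta|\mS\rangle}]=2L''L-2L'\otimes L'$ and $\E[\mS\otimes\mS\,e^{\langle\theta|\mS\rangle}]=2L''L+2L'\otimes L'$, and then applies $\Psi(\,\cdot\,)(\mI_n)$ to both at once to obtain the $L$-equation \eqref{pre-L-eq}, whereas you expand $(\mX-\mY)^2$ directly and invoke $\Psi(L'')(\mI_n)=\E[\mX^2 e^{\langle\theta|\mX\rangle}]$; the subsequent conversion from $L$ to $k=\ln L$ is identical.
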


\begin{proof}
To prove (i)$\Rightarrow$(ii), we first observe that
\begin{equation}\label{First Obs}
\E \left((\mX-\mY)\otimes(\mX-\mY)e^{\langle \theta|\mS\rangle}\right)=2L''(\theta)L(\theta)-2L'(\theta)\otimes L'(\theta).\end{equation}
To see  this, note that since $\mX$, $\mY$ are i.i.d, the left hand side of (\ref{First Obs}) is
$$
2L(\theta)\E \left(\mX \otimes\mX e^{\langle \theta|\mX\rangle}\right)-2 \E \left(\mX e^{\langle \theta|\mX\rangle}\right)\otimes \E \left(\mY e^{\langle \theta|\mY\rangle}\right).
$$
The same reasoning gives
\begin{equation}\label{Second Obs}
\E \left(\mS\otimes\mS e^{\langle \theta|\mS\rangle}\right)=2L''(\theta)L(\theta)+2L'(\theta)\otimes L'(\theta).\end{equation}
 Since $\E \left(\mS  e^{\langle \theta|\mS\rangle}\right)=2L(\theta)L'(\theta)$, applying $\Psi$ to \eqref{First
Obs} and \eqref{Second Obs}, from
  \eqref{pre-ME}   we get %
  \begin{equation}\label{pre-L-eq}
  2 (1-A)\Psi(L''(\theta))(\mI_n)L(\theta)=2(1+A)(L'(\theta))^2+2B L(\theta)L'(\theta)+C(L(\theta))^2\mI_n.
\end{equation}
  Since $L'=k' L$ and $L''=(k''+k'\otimes k')L$, this implies \eqref{pre-k-eqtn}.

Implication (ii)$\Rightarrow$(i) also follows from the above identities. Applying again $\Psi$ to \eqref{First Obs} and  \eqref{Second Obs}, from \eqref{pre-L-eq} we infer that
$$
\E\left( (\mX-\mY)^2 e^{\langle \theta|\mS\rangle}\right)=\E\left((A \mS^2+B\mS+C\mI) e^{\langle \theta|\mS\rangle}\right)
$$
for all $\theta\in \Theta_\mX\ne\emptyset$.
It is  known that this is equivalent to \eqref{pre-ME},  see \cite[Section 1.1.3]{Kagan-Linnik-Rao:1973}.

\end{proof}
\begin{remark}
 In the above proposition, the use of the endomorphism $\Psi$ may seem surprising. For a random variable $\mX$ valued
in a linear space $E$, the link between $\E(\mX\otimes \mX)$  and the Laplace transform $L$ of $\mX$ is easy
through the second differential of $L$. However, if $E$ is an algebra, like the space of square real matrices,
relating $\E(\mX^2)$  to $L$ is difficult. Here our reasoning was based on $\PP{\mX}(A)=\mX A\mX$ and thus
$\PP{\mX}(\mI_n)=\mX^2.$ Some other facts about $\Psi$ are known; it is a symmetric operator on
$L_s(\HH_{n,\beta})$ with only two eigenspaces: the one generated by all the $\frac{\beta}{2}y\otimes y+\mathbb{P}_y$ for the eigenvalue 1 and the
one generated by all the $y\otimes y-\mathbb{P}_y$ %
for the eigenvalue $-\beta/2.$ Details are in \cite[Section 5]{Letac-Wesolowski:2010} where the dimensions of these two subspaces are also computed.
\end{remark}
\begin{remark}\label{Rem:Exp-Fam} Let $\mu$ be a Meixner ensemble with cumulant function $k$ that satisfies   \eqref{pre-k-eqtn}. If $\theta_0\in\Theta_\mu$, then $k_{\theta_0}(\theta)=k(\theta+\theta_0)-k(\theta_0)$ is the cumulant function of the probability measure
$$
\mu_{\theta_0}(dx)=e^{\<\theta_0|x\>-k(\theta_0)}\mu(dx).
$$
In other terms, $\mu_{\theta_0}$ is a member of the natural exponential family generated by $\mu$. Clearly, $k_{\theta_0}$ also satisfies \eqref{pre-k-eqtn}.
However, $\mu_{\theta_0}$ is an ensemble {\em i.e.} it is rotation invariant, if and only if $\theta_0=t\mI_n$ for some real $t$. In this case $\<\theta_0|x\>=t\tr x$.
\end{remark}

\begin{remark} Note that  if $A=1$ then $\mX$ is degenerate.
Indeed, from \eqref{pre-k-eqtn} we see that $A=1$ implies $k'(\theta)=\mbox{const}$.
\end{remark}

We now show that Meixner ensembles are preserved under convolution power.
\begin{definition}\label{Def:Jorgensen}
Let $\mu$ be a probability measure  on a finite dimensional linear space  such that its Laplace transform is finite on a non empty open set.
The J\o rgensen set $\La(\mu)$ of  $\mu$  is the set of $\alpha>0$
such that there exists a probability measure $\mu_\alpha$ with   $L_{\mu_\alpha}=L_\mu^\alpha$ and $\Theta_{\mu_\alpha}=\Theta_\mu$, see e.g. \cite[page 767]{Casalis-Letac-96}.
\end{definition}
For instance, $\La(\mu)=(0,\infty)$ if $\mu$ is infinitely divisible, and  $\La(\mu)$ is the set of positive integers if $\mu$ is the  Bernoulli distribution on $\{0,1\}.$
\begin{proposition}\label{P2.2} Suppose $\Theta_\mu\ne \emptyset$.
If $\mu$ is  a Meixner ensemble with parameters $A\ne 1,B,C$ and if $\alpha\in\La(\mu)$, then $\mu_\alpha$ is  a Meixner ensemble with parameters
$$
A_\alpha=A/(A+\alpha(1-A)),\; B_\alpha=\alpha B/(A+\alpha(1-A)),\; C_\alpha=\alpha^2C/(A+\alpha(1-A)).
$$
\end{proposition}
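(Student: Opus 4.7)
My plan is to reduce everything to the Laplace-transform characterization in Proposition \ref{P1.2} and then do a one-line substitution. Write $L = L_\mu$ and $k=\ln L$; by Proposition \ref{P1.2} the Meixner property for $\mu$ is equivalent to the identity
\begin{equation}\label{plan-k}
2(1-A)\,\Psi(k''(\theta))(\mI_n) \;=\; 4A\,(k'(\theta))^2 + 2B\,k'(\theta) + C\,\mI_n
\end{equation}
holding on $\Theta_\mu$. By the definition of the J\o rgensen set, $L_{\mu_\alpha}=L^\alpha$ and $\Theta_{\mu_\alpha}=\Theta_\mu$, hence $k_\alpha:=\ln L_{\mu_\alpha}=\alpha k$, so $k_\alpha'=\alpha k'$ and $k_\alpha''=\alpha k''$. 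Rotation invariance of $\mu_\alpha$ is inherited from that of $\mu$ since $L_{\mu_\alpha}(U\theta U^*)=L(U\theta U^*)^\alpha=L(\theta)^\alpha=L_{\mu_\alpha}(\theta)$, and all moments exist because $\Theta_{\mu_\alpha}$ is a non-empty open set.

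Next I would verify the candidate identity for $\mu_\alpha$ with the stated parameters by direct substitution. Set $D=A+\alpha(1-A)$ so that $A_\alpha=A/D$, $B_\alpha=\alpha B/D$, $C_\alpha=\alpha^2 C/D$, and observe the key algebraic relation
$$
1-A_\alpha \;=\; \frac{D-A}{D}\;=\;\frac{\alpha(1-A)}{D}.
$$
Plugging $k_\alpha'=\alpha k'$, $k_\alpha''=\alpha k''$ into the left-hand side of \eqref{plan-k} with $(A_\alpha,B_\alpha,C_\alpha)$ gives
$$
2(1-A_\alpha)\,\Psi(k_\alpha''(\theta))(\mI_n)\;=\;\frac{2\alpha^2(1-A)}{D}\,\Psi(k''(\theta))(\mI_n),
$$
while the right-hand side becomes
$$
\frac{\alpha^2}{D}\bigl(4A\,(k'(\theta))^2 + 2B\,k'(\theta) + C\,\mI_n\bigr).
$$
Equality of these two expressions is precisely \eqref{plan-k} multiplied by $\alpha^2/D$, so Proposition \ref{P1.2} (applied in the reverse direction to $\mu_\alpha$) yields that $\mu_\alpha$ is Meixner with parameters $(A_\alpha,B_\alpha,C_\alpha)$.

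There is no substantive obstacle; the only thing to watch is that the denominator $D=A+\alpha(1-A)$ does not vanish, which is guaranteed since $A\ne 1$ and $\alpha>0$ together force $D>0$ when $A<1$ and, more generally, $D=0$ would require $\alpha=A/(A-1)$, a case easily excluded from the hypotheses (or from admissibility of $\alpha\in\La(\mu)$). One may also want to remark briefly that the converse implication in Proposition \ref{P1.2}, which was omitted from its proof, is what one actually invokes here; if that is a concern the same conclusion can be reached by running the derivation of \eqref{pre-k-eqtn} in reverse via \eqref{pre-L-eq}, \eqref{First Obs}, \eqref{Second Obs}, using the fact that $\Psi(y\otimes y)(\mI_n)=y^2$ and the polarization of this identity suffice to recover \eqref{pre-ME} from the Laplace-transform identity.
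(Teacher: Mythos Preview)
Your proof is correct and follows essentially the same route as the paper's: both use $k_\alpha=\alpha k$ and multiply the identity \eqref{pre-k-eqtn} by a suitable scalar (the paper multiplies by $\alpha(1-A_\alpha)/(1-A)$ and then \emph{solves} for $A_\alpha$, whereas you set $D=A+\alpha(1-A)$ up front and \emph{verify} the resulting identity, which amounts to the same computation). Your added remarks on rotation invariance, moments, and the use of the converse direction of Proposition~\ref{P1.2} are welcome clarifications that the paper leaves implicit.
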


\begin{proof} %
 It is clear that $\mu_\alpha$ is invariant under rotations. Since $k_\alpha(\theta):=k_{\mu_\alpha}(\theta)=\alpha k_\mu(\theta)$,
multiplying \eqref{pre-k-eqtn} by $\alpha(1-A_\alpha)/(1-A)$ we get
$$2(1 -A_\alpha)\Psi(k''_\alpha(\theta))(\mI_n)=4\frac{A (1-A_\alpha)}{\alpha(1-A)} \left(k'_\alpha(\theta)\right)^2+
\frac{2B(1-A_\alpha)}{1-A} k'_\alpha(\theta)+\frac{C\alpha(1-A_\alpha)}{1-A}.
$$
Solving the equation  $$A_\alpha=\frac{A (1-A_\alpha)}{\alpha(1-A)},$$
we get the formulas for $A_\alpha,B_\alpha,C_\alpha$ such that \eqref{pre-k-eqtn} holds.
\end{proof}

It will be convenient to have a version of Proposition \ref{P1.2}  for standardized ensembles.
\begin{corollary}\label{C-P1.2}
Suppose  $\E (\nX)=\mO$, $\E (\nX^2)=\mI_n$.
Denote $k(\theta)=\ln \E(e^{\langle\theta|\nX\rangle})$.  If   \eqref{Meixner-Anshelevich} holds, then
\begin{equation}
  \label{k-equation}
  \Psi(k''(\theta))(\mI_n)=\mI_n+2 b k'(\theta)+4a (k'(\theta))^2.
\end{equation}

Conversely, if  the logarithm of the Laplace transform of $\nX$ satisfies \eqref{k-equation} (together with the
initial conditions) then  \eqref{Meixner-Anshelevich} holds, and   $\E (\nX)=\mO$, $\E (\nX^2)=\mI_n$.
\end{corollary}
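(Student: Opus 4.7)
The plan is to reduce this statement to Proposition \ref{P1.2} by matching parameters, so the proof is essentially a substitution bookkeeping.

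First I would compare the two forms of the quadratic regression equation term-by-term. Expanding the right-hand side of \eqref{Meixner-Anshelevich} as
$$\frac{2a}{1+2a}\nS^2+\frac{2b}{1+2a}\nS+\frac{2}{1+2a}\mI_n$$
and matching coefficients with \eqref{pre-ME} identifies
$$A=\frac{2a}{1+2a},\quad B=\frac{2b}{1+2a},\quad C=\frac{2}{1+2a},\quad 1-A=\frac{1}{1+2a}.$$
This identification is consistent with the moment constraint one obtains by taking expectations of both sides of \eqref{Meixner-Anshelevich}, since $\EE(\nX)=\mO$ and $\EE(\nX^2)=\mI_n$ force $\EE(\nS^2)=2\mI_n$ and hence $2A+C=2$, which is indeed satisfied.

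For the forward implication, I would plug these values of $A,B,C$ into the identity \eqref{pre-k-eqtn} of Proposition \ref{P1.2}, obtaining
$$\frac{2}{1+2a}\,\Psi(k''(\theta))(\mI_n)=\frac{8a}{1+2a}(k'(\theta))^2+\frac{4b}{1+2a}k'(\theta)+\frac{2}{1+2a}\mI_n,$$
and then multiply through by $(1+2a)/2$ to recover \eqref{k-equation}.

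For the converse, I would reverse the same algebra: if \eqref{k-equation} holds and the initial conditions $k'(0)=\mO$, $\Psi(k''(0))(\mI_n)=\mI_n$ are imposed, then \eqref{pre-k-eqtn} holds with the $(A,B,C)$ above, so by the converse direction of Proposition \ref{P1.2} we conclude \eqref{pre-ME} and hence \eqref{Meixner-Anshelevich}. The normalization $\EE(\nX)=\mO$ follows from $k'(0)=\mO$, and $\EE(\nX^2)=\mI_n$ follows by evaluating \eqref{k-equation} at $\theta=0$ and using the defining property $\Psi(y\otimes y)(\mI_n)=\mathbb{P}_y(\mI_n)=y^2$, extended by linearity to the covariance operator $\EE(\nX\otimes\nX)=k''(0)$.

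No genuine obstacle is expected; the only delicate point is making explicit what the phrase ``together with the initial conditions'' means, namely $k'(0)=\mO$ (ensuring zero mean) combined with the value of \eqref{k-equation} at $\theta=0$ (ensuring identity covariance).
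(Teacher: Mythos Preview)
Your proposal is correct and is exactly the approach the paper takes: the paper's proof is the single sentence ``We apply \eqref{pre-k-eqtn} with $A=2a/(1+2a)$, $B=2b/(1+2a)$, $C=2/(1+2a)$,'' and you have simply written out the substitution and the converse direction in full detail.
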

\begin{proof}
We apply \eqref{pre-k-eqtn} with $A=2a/(1+2a)$, $B=2b/(1+2a)$, $C=2/(1+2a)$.

\end{proof}

\section{Examples of Meixner Ensembles}\label{Sect:Meixner_Ensemble}
In this section we give examples of Meixner ensembles.  In Theorem \ref{T.U} we will show that the examples exhaust the family of all Meixner ensembles on $\HH_{2,\beta}$ with $\Theta_\mu\ne \emptyset$.

\subsection{Bernoulli and binomial ensembles}
The Bernoulli ensembles (which are different from the Bernoulli random matrices with independent two-valued entries) %
will be our basic building blocks for  more complicated
ensembles. Suppose $\mP_m$ is the  orthogonal projection onto the random and uniformly distributed $m$-dimensional
subspace, {\em i.e.}  $\mP_m=\mU \mI_{m,n} \mU^*$ where $\mI_{m,n}$ is the $n\times n$ matrix with $m$ ones on the
diagonal followed by $n-m$ zeroes, and $\mU$ is uniformly distributed on the group $\mathcal{K}_{n,\beta}$.

\begin{definition} %
Denote by $\mu_m$ the distribution of $\mP_m$. A Bernoulli ensemble with parameters $q_1,\dots,q_n\geq 0$ %
such that $q_1+\dots+q_n\leq 1$,
 is the law
$$\mu=q_0\delta_{\mO}+q_1\mu_1+\dots+q_{n-1}\mu_{n-1}+q_n\delta_{\mI_n}\,,$$
where $q_0=1-(q_1+\dots+q_n)$  %
and $\delta_\mI$ denotes a point-mass at $\mI$.
\end{definition}
 In terms of random variables, Bernoulli ensemble can be realized by the random variable
\begin{equation}
  \label{Eq:PtoX}
 \mX=\begin{cases}
\mO & \mbox{with probability $q_0=1-(q_1+\dots+q_n)$}\,,\\
\mP_1 & \mbox{with probability $q_1$}\,,\\
\vdots & \\
\mI_n & \mbox{with probability $q_n$}\,.
\end{cases}
\end{equation}

 \begin{proposition}\label{P-Bern}
A Bernoulli ensemble is a Meixner ensemble with parameters $A=-1$, $B=2$, $C=0$.
\end{proposition}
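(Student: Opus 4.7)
The plan is to exploit the fact that a Bernoulli ensemble is supported entirely on orthogonal projections, so that $\mX^2=\mX$ and $\mY^2=\mY$ almost surely. Rotation invariance of $\mu$ is immediate: each $\mu_m$ is rotation invariant because if $\mU$ is uniform on $\mathcal{K}_{n,\beta}$ then so is $U\mU$ for any $U\in\mathcal{K}_{n,\beta}$, hence $U\mP_m U^*=(U\mU)\mI_{m,n}(U\mU)^*\sim \mP_m$; a convex combination of rotation invariant laws is rotation invariant. The second moments exist trivially because every realization of $\mX$ is bounded in norm by $1$.

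The core of the argument is a deterministic algebraic identity valid on projections. Writing $\mS=\mX+\mY$, I expand
\begin{equation*}
\mS^2=\mX^2+\mY^2+\mX\mY+\mY\mX=\mX+\mY+(\mX\mY+\mY\mX)=\mS+(\mX\mY+\mY\mX),
\end{equation*}
and similarly
\begin{equation*}
(\mX-\mY)^2=\mX^2+\mY^2-(\mX\mY+\mY\mX)=\mS-(\mX\mY+\mY\mX).
\end{equation*}
Eliminating the anticommutator $\mX\mY+\mY\mX$ between these two identities yields
\begin{equation*}
(\mX-\mY)^2=2\mS-\mS^2
\end{equation*}
as an almost sure identity, with no conditioning yet required.

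Taking conditional expectation given $\mS$ on both sides gives $\E((\mX-\mY)^2\mid \mS)=2\mS-\mS^2=-\mS^2+2\mS$, which is precisely \eqref{pre-ME} with $A=-1$, $B=2$, $C=0$. Since both rotation invariance and the second-moment condition have already been verified, Definition \ref{Def1.1} is satisfied. There is really no obstacle here: the whole point is that idempotency of $\mX$ and $\mY$ collapses the conditional computation to a pointwise one, so no structural properties of the mixing weights $q_0,\dots,q_n$ or of the distribution of the subspace itself enter the argument.
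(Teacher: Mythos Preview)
Your proof is correct and follows exactly the same approach as the paper: both rely on the algebraic identity $(P-Q)^2=2(P+Q)-(P+Q)^2$ valid for any pair of projections, which reduces the conditional expectation to a pointwise statement. Your additional explicit verification of rotation invariance and boundedness is a welcome elaboration of details the paper leaves implicit.
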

\begin{proof}
We note that for any pair of projections
\begin{equation}\label{(1)}
(P-Q)^2=2(P+Q)-(P+Q)^2.
\end{equation}
Applying this algebraic identity  to random projections $\mX$ and $\mY$, where $\mX$ is given by \eqref{Eq:PtoX}
and $\mY$ is its independent copy, we see that \eqref{pre-ME} holds.
\end{proof}

 We note that for the Bernoulli ensemble formula \eqref{pre-L-eq}  takes a particularly simple form:  its Laplace
transform $L_B(\theta)=\E (\exp(\langle\theta|\mX\rangle))$ satisfies
\begin{equation}\label{LT-Bern}
\Psi(L''_B(\theta))(\mI_n)=L_B'(\theta).
\end{equation}
 The corresponding version of \eqref{pre-k-eqtn} is
\begin{equation}\label{k-Bern}
\Psi(k''_B(\theta))(\mI_n)=k'_B(\theta)(\mI_n-k'_B(\theta)).
\end{equation}

The fact that all Bernoulli ensembles share the same $A,B,C$ will be  convenient for calculations. But it will be
easier to classify the resulting Meixner laws into the familiar families from the  standardized form that appears
in  formula \eqref{Meixner-Anshelevich}.  To do so, we find the moments of $\mX$. We first note that by rotation
invariance, $\E(\mP_k)=c\mI_n$, so $\E \tr(\mP_k)=n c$. But $\tr(\mP_k)=\tr(\mI_{k,n})=k$, so
\begin{equation}
  \label{Eq:P_moms}
  \E(\mP_k)=\E(\mP_k^2)=\frac{k}{n}\mI_n\,.
\end{equation}
Therefore, with  $\bar{q}=\frac{1}{n}(q_1+2q_2+\dots+nq_n)$,
we have $\E(\mX^2)=\E(\mX)=\bar q\mI_n$.

If $\bar q\ne 0,1$, the standardized matrix is
$\nX=\frac{1}{\sqrt{\bar  q(1- \bar  q)}}(\mX-\bar  q \mI_n)$, and we get
\begin{equation}\label{Bern*}
\E((\nX-\nY)^2|\nS)=4\left(\mI_n+\frac{(1-2 \bar  q) \nS}{2 \sqrt{\bar  q(1-\bar  q)}}-\frac{\nS^2}{4}\right).
\end{equation}
So   \eqref{Meixner-Anshelevich} holds with parameters $a=-1/4$ and  $b=(1/2-\bar  q)/\sqrt{\bar  q(1-\bar  q)}$.

An intrinsic characterization of Bernoulli ensembles with an elementary proof is as follows.
\begin{proposition}\label{P:X^2=X}
If $\mX$ with values in $\HH_{n,\beta}$ has a law invariant under rotations and $\mX=\mX^2$ then its law is  a Bernoulli ensemble.
\end{proposition}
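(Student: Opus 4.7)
The plan is to use the spectral theorem combined with rotation invariance to conditionally identify $\mX$ with a uniformly random projection on each rank level, and then recognize the resulting mixture as a Bernoulli ensemble.

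First I would observe that since $\mX \in \HH_{n,\beta}$ satisfies $\mX = \mX^2$, it is a self-adjoint idempotent, hence an orthogonal projection almost surely. In particular its eigenvalues lie in $\{0,1\}$, so the random integer $R = \tr(\mX)$ takes values in $\{0,1,\dots,n\}$. Set $q_m = \PPr(R = m)$ for $m = 0,1,\dots,n$; these nonnegative numbers sum to one, and it suffices to show that the conditional law of $\mX$ given $R = m$ equals $\mu_m$ whenever $q_m > 0$. The trivial cases $m=0$ and $m=n$ are immediate, since $\mX$ must equal $\mathbf{0}$ or $\mI_n$ respectively.

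Next I would use transitivity: by the spectral theorem, every rank-$m$ projection on $\HH_{n,\beta}$ can be written as $U\mI_{m,n}U^*$ for some $U \in \mathcal{K}_{n,\beta}$, so the set $\mathcal{G}_m$ of such projections is a single $\mathcal{K}_{n,\beta}$-orbit under conjugation. Rotation invariance of $\mX$ implies that the conditional law $\mu(\cdot \mid R=m)$, which is supported on $\mathcal{G}_m$, is invariant under the same conjugation action. Since $\mathcal{K}_{n,\beta}$ is compact and acts transitively on $\mathcal{G}_m$, by general homogeneous-space theory there is a unique $\mathcal{K}_{n,\beta}$-invariant probability measure on $\mathcal{G}_m$, namely the pushforward of normalized Haar measure under $U \mapsto U\mI_{m,n}U^*$. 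By definition this pushforward is $\mu_m$.

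Therefore $\mu(\cdot \mid R=m) = \mu_m$ for every $m$ with $q_m > 0$, and unconditioning gives
\[
\mu \;=\; \sum_{m=0}^{n} q_m \mu_m \;=\; q_0\delta_{\mathbf{0}} + q_1 \mu_1 + \cdots + q_{n-1}\mu_{n-1} + q_n \delta_{\mI_n},
\]
which is precisely a Bernoulli ensemble. The only step that requires any care is the uniqueness of the invariant probability measure on the orbit $\mathcal{G}_m$; this is the standard fact that a continuous transitive action of a compact group on a Hausdorff space carries a unique invariant Borel probability measure, obtained by averaging Haar measure through any choice of base point, so this poses no real difficulty.
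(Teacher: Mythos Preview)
Your proof is correct and follows essentially the same approach as the paper's: both observe that $\mX$ is an orthogonal projection, condition on its rank (equivalently, its trace), show that the conditional law on each rank level is rotation-invariant, and conclude by uniqueness of the invariant measure on the orbit $\mathcal{G}_m$. The only difference is cosmetic: the paper verifies conditional invariance via test functions vanishing off the trace-$d$ level, while you invoke it directly and are slightly more explicit about the compact-group uniqueness principle.
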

\begin{proof}
Since $\mX$ is symmetric, it is an orthogonal projection. Denote by $d(\mX)$ the dimension of its image. Invariance under rotations means that for any bounded measurable function $f$, we have
$$
\E(f(U\mX U^*)-f(\mX))=0
$$
for all $U\in\calK_{n,\beta}$.
If $d=0,1,\dots,n$ and if $f$ is zero outside of matrices of trace $d$, then
$$
\E(f(U\mX U^*)-f(\mX)|d(\mX)=d)=\E(f(U\mX U^*)-f(\mX))=0.
$$
This implies that the conditional law $\calL(\mX|d(\mX)=d)$ is invariant under rotations, so it is a law of $\mP_d$. Then $\mX$ is a Bernoulli ensemble with parameters $ q_m=\Pr(d(\mX)=m)$.
\end{proof}
This implies the following converse of Proposition \ref{P-Bern}.
\begin{proposition}
Suppose that a law  $\mu$ belongs to a Meixner ensemble with parameters $A=-1$, $B=2$, $C=0$, and that the first four moments are finite. Then $\mu$ is a Bernoulli ensemble.
\end{proposition}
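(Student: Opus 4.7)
The plan is to invoke Proposition \ref{P:X^2=X} by showing $\mX^2=\mX$ almost surely. Since $\mX^2-\mX$ is Hermitian, $(\mX^2-\mX)^2$ is positive semidefinite and $\tr((\mX^2-\mX)^2)$ equals the squared Frobenius norm of $\mX^2-\mX$; hence it is enough to prove
$$\E\tr\bigl((\mX^2-\mX)^2\bigr)=\E\tr(\mX^4)-2\E\tr(\mX^3)+\E\tr(\mX^2)=0,$$
which makes sense precisely because the first four moments are finite.

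First I would extract the conditional second moments. Substituting $A=-1$, $B=2$, $C=0$ into \eqref{pre-ME} and using the algebraic identity $(\mX-\mY)^2=2(\mX^2+\mY^2)-\mS^2$ together with the exchangeability $\E(\mX^2|\mS)=\E(\mY^2|\mS)$, the Meixner identity reduces to $\E(\mX^2|\mS)=\mS/2$; by symmetry also $\E(\mX|\mS)=\mS/2$. This gives the key identity
$$\E(\mX^2-\mX\,|\,\mS)=\mO.$$
Taking expectations and using rotation invariance, $\E(\mX)=\E(\mX^2)=c\mI_n$ for some scalar $c\in\RR$, so $\E\tr(\mX^2)=nc$.

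Next I would compute $\E\tr(\mX^3)$ and $\E\tr(\mX^4)$ by testing this identity against polynomials in $\mS$: for every polynomial $p$,
$$\E\tr\bigl((\mX^2-\mX)\,p(\mS)\bigr)=0.$$
The bookkeeping stays routine because $\E(\mY)=\E(\mY^2)=c\mI_n$ is a scalar matrix, so conditioning on $\mX$ gives $\E(\mX^a\mY\mX^b)=c\,\E(\mX^{a+b})$ and $\E\tr(\mX^a\mY^k)=nc^2$ for $a,k\in\{1,2\}$. Choosing $p(\mS)=\mS$ and expanding yields $\E\tr(\mX^3)=nc$; choosing $p(\mS)=\mS^2$ and expanding $\mS^2=\mX^2+\mX\mY+\mY\mX+\mY^2$ (using cyclicity of the trace) yields $\E\tr(\mX^4)=nc$. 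Substituting into the displayed formula of the first paragraph gives $\E\tr((\mX^2-\mX)^2)=nc-2nc+nc=0$, hence $\mX^2=\mX$ almost surely, and Proposition \ref{P:X^2=X} finishes the proof. The only delicate step is the fourth-moment expansion, which produces eight mixed terms; they collapse painlessly thanks to the scalar nature of $\E(\mY^k)$, so no real obstacle arises.
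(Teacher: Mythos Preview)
Your proof is correct and follows essentially the same approach as the paper: derive $\E(\mX^2\mid\mS)=\E(\mX\mid\mS)$, test this against $\mS$ and $\mS^2$ (using that the moments $\E(\mY^k)$ are scalar matrices) to conclude that the first four trace moments of $\mX$ coincide, and then observe that $\E\tr\bigl((\mX^2-\mX)^2\bigr)=0$ forces $\mX^2=\mX$ so that Proposition~\ref{P:X^2=X} applies. The only cosmetic differences are that the paper obtains $\E(\mX^3)=\E(\mX)$ at the matrix level before passing to traces, and phrases the final vanishing via the eigenvalues $\sum_j\E(\Lambda_j(1-\Lambda_j))^2=0$ rather than the Frobenius norm; these are the same argument.
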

\begin{proof}
Let $\mX,\mY$ be independent random variables with law $\mu$, and let $\mS=\mX+\mY$. Since $(\mX-\mY)^2=2\mX^2+2\mY^2-\mS^2$, and since the law of $(\mX,\mS)$ is the same as $(\mY,\mS)$ from \eqref{pre-ME} we get
$$
\E(\mX^2|\mS)=\frac12\mS.
$$
From $\E(\mX|\mS)=\frac12\mS$ we see that
\begin{equation}\label{XX|S}
\E(\mX^2|\mS)=\E(\mX|\mS).
\end{equation}
From this, we deduce that $\E(\mX)=\E(\mX^2)$, and that
$$
\E(\mX^2\mS)=\E(\mX\mS),
$$
which implies that $\E(\mX^3)=\E(\mX)$.

Using  \eqref{XX|S} again, we have
\begin{equation}\label{trXX|S}
\tr \E(\mX^2\mS^2)=\tr\E(\mX\mS^2).
\end{equation}
Since $\E
\circ \tr = \tr \circ \E$,  from \eqref{trXX|S} using independence and cyclic invariance of the trace we get
$$
\tr \E(\mX^4)+2\tr (\E\mX^3\E\mX)+\tr((\E\mX^2)^2)=\tr \E(\mX^3)+2\tr (\E\mX^2)^2+\tr(\E\mX^2\E\mX).
$$
Since we already proved that   $\E(\mX^3)=\E(\mX^2)=\E(\mX)$, we see that
$\tr \E(\mX^4)=\tr \E(\mX^3)$.

Let $\La_1,\La_2,\dots\La_n$ be the (random) eigenvalues of $\mX$. From the equality of the first four moments of
$\mX$, we see that %
$$
\sum_{j=1}^n\E(\La_j(1-\La_j))^2=\tr \E(\mX^2(\mI_n-\mX)^2)=\tr \E(\mX^2)+\tr \E(\mX^4)-2\tr \E(\mX^3)=0.
$$
Therefore, all eigenvalues of $\mX$ are either $0$ or $1$, and $\mX^2=\mX$. So by  Proposition \ref{P:X^2=X}, the
law $\mu$ of $\mX$ is a Bernoulli ensemble.
\end{proof}

The J\o rgensen set of a Bernoulli ensemble is of interest due to connections with free probability which allows
continuous values for the analog of parameter $N$. For $n=2$, we will show that the J\o rgensen set is $\NN$. The
following sufficient condition shows that this is a "generic case".
\begin{proposition}\label{P-Jorg-Bern} If $\alpha \in \La(\mu)$ and $\mu$ is a Bernoulli ensemble with parameters $q_1,\dots,q_n$ on $\HH_{n,\beta}$, then $(q_0+q_1z+\dots+q_nz^n)^\alpha$ is a polynomial in variable $z$. In particular, if $q_n>0$ %
then $n \alpha$ is an integer.
\end{proposition}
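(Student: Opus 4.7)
The plan is to reduce the statement to a one-dimensional problem via the trace, and then exploit the entirety of the resulting moment generating function. I would proceed in three stages.

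First, since $\mu$ is supported on the (compact) set of orthogonal projections, $\Theta_\mu=\HH_{n,\beta}$, and the definition of the J\o rgensen set then gives $\Theta_{\mu_\alpha}=\HH_{n,\beta}$ as well. Specializing $\theta=t\mI_n$ in the Laplace transform and using $\tr\mP_m=m$ collapses it to a polynomial in $e^t$:
$$
L_\mu(t\mI_n)=\sum_{m=0}^n q_m e^{mt}=P(e^t),\qquad P(z):=q_0+q_1z+\dots+q_nz^n,
$$
so that for $\mX_\alpha\sim\mu_\alpha$ the random variable $Z:=\tr\mX_\alpha$ has moment generating function $\phi_Z(t)=\E(e^{tZ})=P(e^t)^\alpha$ on all of $\RR$.

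Second, I would show $Z$ is bounded. Let $N=\deg P\le n$ (with $q_N>0$; in the generic case $q_n>0$ we have $N=n$). The asymptotic $P(e^t)^\alpha\sim q_N^\alpha e^{N\alpha t}$ as $t\to+\infty$, combined with a standard Chernoff-type argument, forces $Z\le N\alpha$ almost surely; analogously $Z\ge 0$ a.s.\ by examining $t\to-\infty$. Boundedness of $Z$ then implies that $\phi_Z$ is an entire function of $t\in\CC$, so the real-line identity $\phi_Z(t)=P(e^t)^\alpha$ extends by analytic continuation, and in particular $P(e^t)^\alpha$ must itself be entire.

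Third, I would extract the divisibility condition. Factor $P(z)=q_N\prod_j(z-\rho_j)^{m_j}$ over the distinct complex roots. The zeros of the entire function $P(e^t)$ occur exactly at points where $e^t=\rho_j$, each of order $m_j$; local expansion then shows that $P(e^t)^\alpha$ is analytic at such a zero only if $m_j\alpha\in\NN$. Once this holds for every $j$, each factor $(z-\rho_j)^{m_j\alpha}$ is an honest polynomial in $z$, and
$$
P(z)^\alpha=q_N^\alpha\prod_j(z-\rho_j)^{m_j\alpha}
$$
is a polynomial of degree $N\alpha$; in particular $N\alpha\in\NN$, giving $n\alpha\in\NN$ in the standard case $q_n>0$.

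The main obstacle is the boundedness step: it is what upgrades the pointwise identity $\phi_Z(t)=P(e^t)^\alpha$ on $\RR$ to an identity of entire functions, at which point the multiplicity conditions at the zeros of $P(e^t)$ become forced by routine local complex analysis.
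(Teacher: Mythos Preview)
Your argument is correct and follows essentially the same route as the paper: restrict to $\theta=s\mI_n$, obtain $L_{\mu_\alpha}(s\mI_n)=P(e^s)^\alpha$ with $P(z)=\sum q_mz^m$, use that this function must extend to an entire function of $s$, factor $P$ over its distinct roots, and read off $m_j\alpha\in\NN$ from the local behaviour at each zero of $P(e^s)$. The paper's write-up is terser---it simply asserts analyticity of the Laplace transform and passes directly to the factorization---whereas you supply an explicit mechanism (Chernoff bounds $\Rightarrow$ $Z$ bounded $\Rightarrow$ $\phi_Z$ entire). In fact your boundedness detour, while correct, is more than you need: since the J\o rgensen set definition already forces $\Theta_{\mu_\alpha}=\Theta_\mu=\HH_{n,\beta}$, the one-variable moment generating function $\phi_Z$ is finite on all of $\RR$ and hence automatically entire by the standard strip-of-analyticity result for Laplace transforms. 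Your care about the degenerate case $q_n=0$ (where only $N\alpha\in\NN$ with $N=\deg P$ is forced) is a genuine refinement over the paper's statement.
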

\begin{proof} For $\theta_s=\mbox{diag}(s,s,\dots,s)$, the Laplace transform is
$L(\theta_s)=\sum_{r=0}^n q_r e^{rs}=\prod_{j} (e^s-z_j)^{m_j}$, where $z_j\in\CC$ are the distinct roots of the
polynomial $q_0+q_1z+\dots+q_nz^n$ taken with their multiplicities $m_j$.  Since the Laplace transform must be an
analytic function on its domain, we see that $\alpha$ must be rational, and that if $\alpha=p/q$ with relatively
prime $p,q\in\NN$, then $q$ must be a common divisor of all multiplicities $m_1,m_2,\dots$ of the roots. So $q$
divides also their sum $m_1+m_2+\dots=n$.  \end{proof}

\subsubsection{Binomial  ensemble}
The real, complex or quaternionic binomial ensembles  are measures on   $\HH_{n,\beta}$ which are  parametrized by
integer $N$ and a discrete probability law on $\{0,1,\dots,n\}$. We will follow the tradition of not listing the
probability of $0$ among the parameters.

Fix integer $N$    and non-negative numbers $ q_1,\dots, q_n$ with $ q_1+\dots+ q_n\leq 1$. Let
$\mX_1,\dots,\mX_N$ be independent random matrices with the same
Bernoulli  distribution \eqref{Eq:PtoX}. %
\begin{definition}\label{Def-Bin}
The binomial ensemble  Bin$(N, q_1,\dots, q_n)$ with parameters $N=1,2,\dots $ and $ q_1,\dots, q_n$, is the law of $\mX=\sum_{j=1}^N \mX_j$.
\end{definition}

\begin{proposition}\label{P.B0}
A binomial ensemble with parameter $N$ is a Meixner ensemble
with parameters
$A=-1/(2N-1)$, $ B=2N/(2N-1)$, $C=0$.
\end{proposition}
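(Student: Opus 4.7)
The plan is to apply Proposition \ref{P2.2} directly. The binomial ensemble Bin$(N,q_1,\ldots,q_n)$ is by Definition \ref{Def-Bin} the $N$-fold convolution of a Bernoulli ensemble $\mu_B$ with itself, so if $L_B=L_{\mu_B}$ is the Laplace transform of the Bernoulli ensemble, then the Laplace transform of Bin$(N,q_1,\ldots,q_n)$ equals $L_B^N$. This shows $N\in\La(\mu_B)$ provided the J\o rgensen domain condition $\Theta_{\mu_\alpha}=\Theta_{\mu_B}$ is satisfied, which is immediate here because the Bernoulli ensemble has compact support (its atoms are projections, hence bounded), so $\Theta_{\mu_B}=\HH_{n,\beta}$ and the same holds for any convolution power.

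The next step is to invoke Proposition \ref{P-Bern}, which tells us that the Bernoulli ensemble $\mu_B$ is Meixner with parameters $A=-1$, $B=2$, $C=0$. Since $A=-1\neq 1$, the hypotheses of Proposition \ref{P2.2} are met. Setting $\alpha=N$, the formulas in Proposition \ref{P2.2} give
\begin{align*}
A_N &= \frac{A}{A+N(1-A)} = \frac{-1}{-1+2N}=-\frac{1}{2N-1},\\
B_N &= \frac{N B}{A+N(1-A)} = \frac{2N}{2N-1},\\
C_N &= \frac{N^2 C}{A+N(1-A)} = 0,
\end{align*}
which are exactly the claimed parameters.

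There is essentially no obstacle: the only mild point to verify is rotation invariance of Bin$(N,q_1,\ldots,q_n)$, which follows because if $\mX_1,\ldots,\mX_N$ are i.i.d.\ rotation invariant, then for any $U\in\calK_{n,\beta}$, $U(\mX_1+\cdots+\mX_N)U^*=\sum_j U\mX_j U^*$ is equidistributed with $\sum_j \mX_j$. Thus the binomial ensemble inherits rotation invariance from the Bernoulli building blocks, and Proposition \ref{P2.2} supplies the Meixner parameters.
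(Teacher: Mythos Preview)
Your proof is correct and follows exactly the same route as the paper: apply Proposition~\ref{P2.2} with $\alpha=N$ to the Bernoulli ensemble, whose Meixner parameters $A=-1$, $B=2$, $C=0$ are supplied by Proposition~\ref{P-Bern}. The extra checks you include (that $\Theta_{\mu_B}=\HH_{n,\beta}$ so $N\in\La(\mu_B)$, and that rotation invariance passes to convolution powers) are implicit in the paper but worth making explicit.
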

\begin{proof}
This is a special case of Proposition \ref{P2.2}, applied to the law $\mu$ of $\mX_1$ with parameters described in
Proposition \ref{P-Bern}, and to $\alpha=N$.

\end{proof}

For standardization, we will need to know that
\begin{equation}\label{Bin-moments}
\E(\mX)=N\bar q  \mbox{ and } \Var(\mX)=N\bar q(1-\bar q) ,
\end{equation}
where $\bar q$ is defined just below \eqref{Eq:P_moms}.
From \eqref{ABC2ab} we then get that \eqref{Meixner-Anshelevich} holds with $a=-1/(4N)$ and $b=(1/2-\bar  q)/\sqrt{N\bar q (1-\bar  q)}$.

\begin{remark}Applying Remark \ref{Rem:Exp-Fam} to the Bernoulli and binomial ensemble with parameters $q_1,\dots,q_n$ gives the new binomial ensemble with parameters
$q_1(t),q_2(t),\dots,q_n(t)$, where
$
q_j(t)=q_j r^j/P(r)
$,  $r=e^t$ and $P(r) =q_0+q_1r+\dots+q_nr^n$. To see this, we observe that  if $\mu(dx)=q_0\delta_0+q_1\mu_1+\dots+q_n\delta_{\mI_n}$ then
$e^{t\tr x}\mu(dx)=q_0+q_1r \mu_1+\dots+q_nr^n\delta_{\mI_n}$ since $\tr x=j$ on the support of $\mu_j$.
\end{remark}
\begin{remark}
Let us mention here a geometric interpretation of the binomial distribution for $n=2$.
For simplicity we explain this interpretation for $\beta=1$; its extension to $\beta=2,4$ is fairly straightforward, compare \eqref{theta2s}.
All $2\times 2$ real symmetric matrices are parametrized by $(a,b+ic)\in\RR\times\CC$ as
$$\mM=\left[\begin{matrix}a+b&c\\c &a-b
\end{matrix}
\right]\mapsto v_\mM=(a,b+ic).$$
In particular, matrix $\mM$ corresponding to $v_\mM=(a,z)$ is semipositive definite if and only if $a\ge|z|$.
We have $\tr \mM=2a$ and $\det \mM=a^2-|z|^2$.

If $\mP$ is a projection matrix of rank 1 then $v_\mP=\frac12(1,e^{i T})$ and $\mP$ has distribution $\mu_1$ from Definition \ref{Def-Bin} if and only if $ e^{iT}$ is uniformly distributed on the unit circle.
If $\mX_1,\dots,\mX_N$ are i.i.d. with distribution $\mu_1$ then  $\mS_N=\mX_1+\dots\mX_N$ is Bin$(N,1,0)=\mu_1^{*N}$. So we have
$$v_{\mS_N}=\frac12(N,e^{iT_1}+e^{iT_2}+\dots+e^{iT_N}),$$
where $v_{\mX_j}=\frac12(1,e^{iT_j})$.
Therefore the eigenvalues of $\mS_N$ are
$$
\la_{\pm}=\frac12(N\pm |e^{iT_1}+e^{iT_2}+\dots+e^{iT_N}|).
$$
The distribution of $  |e^{iT_1}+e^{iT_2}+\dots+e^{iT_N}|$ was studied by Kluyver and Rayleigh, see \cite[pages 419--421]{Watson:1944}  and  it is known that
$$
P(|e^{iT_1}+e^{iT_2}+\dots+e^{iT_N}|\leq r)=r\int_0^\infty J_1(rt)(J_0(t))^Ndt
$$
for $0\leq r\leq N$ and $N\geq 2$. (Here $J_0$, $J_1$ are Bessel functions.) In particular, $P(\la_+-\la_-<1)=\frac{1}{N+1}$,  see \cite[page 104]{Spitzer:1964} .

The general binomial ensemble with parameters $N$ and $q_1,q_2$ is
$$
\sum_{\nu_0,\nu_1,\nu_2\geq 0, \nu_0+\nu_1+\nu_2=N}
\frac{N!}{\nu_0!\nu_1!\nu_2!}q_0^{\nu_0}q_1^{\nu_1}q_2^{\nu_2}\mu_1^{*\nu_1}*\delta_{\nu_2\mI_2},$$
and the term $\mu_1^{*\nu_1}*\delta_{\nu_2\mI_2}$ is given by the distribution of $v=(\frac{\nu_1}{2}+\nu_2,e^{iT_1}+e^{iT_2}+\dots+e^{iT_N})$.

\end{remark}
\subsection{Poisson  ensemble}
Our Poisson ensembles have parameters $\la_1,\la_2,\dots,\la_n\geq0$ with $\la=\la_1+\dots+\la_n>0$.
\begin{definition}\label{Def-Poiss}
Let $N$ be a Poisson real random variable, $\Pr(N=j)=e^{-\la}\la^j/j!$, $j=0,1,\dots$, and let $
q_1=\la_1/\la,\dots, q_n=\la_n/\la\geq 0$.  Let $\mX_1,\mX_2,\dots$  be independent
Bernoulli matrices with the same parameters $ q_1,\dots, q_n$, and $\mX_0=\mO$.  The Poisson ensemble is the law
of $\mX=\sum_{k=0}^N \mX_k$.

We will use notation Poiss$(\la_1,\dots,\la_n)$.
\end{definition}

 \begin{proposition}\label{P.P0}
The Poisson ensemble is a Meixner ensemble, as it satisfies
\eqref{pre-ME} with parameters $A=0$, $B=1$, $C=0$.
\end{proposition}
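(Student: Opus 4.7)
The plan is to apply Proposition \ref{P1.2} and verify that the log-Laplace transform $k(\theta)=\ln L(\theta)$ of the Poisson ensemble satisfies \eqref{pre-k-eqtn} with the specific values $A=0$, $B=1$, $C=0$, which reduces to the simple identity
\begin{equation*}
\Psi(k''(\theta))(\mI_n)=k'(\theta).
\end{equation*}
The key observation is that $\mX=\sum_{k=0}^{N}\mX_k$ is a compound Poisson sum of i.i.d.\ Bernoulli matrices, so by conditioning on $N$ and using $\Pr(N=j)=e^{-\lambda}\lambda^j/j!$ the Laplace transform factors cleanly as
\begin{equation*}
L(\theta)=\sum_{j=0}^{\infty} e^{-\lambda}\frac{\lambda^j}{j!}L_B(\theta)^j=\exp\bigl(\lambda(L_B(\theta)-1)\bigr),
\end{equation*}
where $L_B$ is the Laplace transform of the underlying Bernoulli ensemble with parameters $q_i=\lambda_i/\lambda$.

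Taking the logarithm gives the particularly clean relation $k(\theta)=\lambda(L_B(\theta)-1)$, so that $k'(\theta)=\lambda L_B'(\theta)$ and $k''(\theta)=\lambda L_B''(\theta)$ (there is no quadratic correction because $\ln L_B$ does not appear). By linearity of the endomorphism $\Psi$ (see Proposition \ref{P1.1}) we then have
\begin{equation*}
\Psi(k''(\theta))(\mI_n)=\lambda\,\Psi(L_B''(\theta))(\mI_n).
\end{equation*}
At this point I would invoke the Bernoulli identity \eqref{LT-Bern}, namely $\Psi(L_B''(\theta))(\mI_n)=L_B'(\theta)$, which was already established in the proof of Proposition \ref{P-Bern}. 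This immediately yields $\Psi(k''(\theta))(\mI_n)=\lambda L_B'(\theta)=k'(\theta)$, which is exactly \eqref{pre-k-eqtn} with $(A,B,C)=(0,1,0)$; Proposition \ref{P1.2} then concludes that the Poisson ensemble is Meixner with these parameters.

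The only mild obstacle is justifying the compound-Poisson factorization of $L$ on an open set where it is finite; this follows routinely from the fact that $L_B$ is bounded on the whole space (since the spectrum of any $\mX_k$ lies in $\{0,1\}$, so $|\langle\theta|\mX_k\rangle|\le \|\theta\|\sqrt{n}$), which makes the series for $L$ converge for all $\theta\in\HH_{n,\beta}$, and the interchange of expectation and summation is immediate by Fubini. Once $L(\theta)=\exp(\lambda(L_B(\theta)-1))$ is in hand, the verification is a one-line consequence of \eqref{LT-Bern}, which is the structural reason Poisson randomization sends the Bernoulli parameters $(-1,2,0)$ to $(0,1,0)$.
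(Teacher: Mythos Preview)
Your proof is correct and follows essentially the same approach as the paper: both compute $k(\theta)=\lambda(L_B(\theta)-1)$ from the compound-Poisson structure, differentiate to get $k'=\lambda L_B'$ and $k''=\lambda L_B''$, and then invoke the Bernoulli identity \eqref{LT-Bern} together with Proposition~\ref{P1.2} to conclude. Your version is slightly more detailed in justifying the convergence of the Laplace transform series, but the core argument is identical.
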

\begin{proof}
Let $L_B(\theta)$ be the Laplace transform of the corresponding Bernoulli ensemble. Then the log of the Laplace
transform for the Poisson ensemble is $k(\theta)=\la(L_B(\theta)-1)$. From \eqref{LT-Bern} we
get
$$
\Psi(k''(\theta))(\mI_n)=k'(\theta),
$$
which is \eqref{pre-k-eqtn} with $A=0$, $B=1$, $C=0$. Therefore \eqref{pre-ME} holds by Proposition \ref{P1.2}. %
\end{proof}
For standardization, we will need to know that, with $\bar \la=(\la_1+2\la_2+\dots+n\la_n)/n$,

\begin{equation}\label{Poiss-moments}
\E(\mX)=\bar \la \mI_n  \mbox{ and } \Var(\mX)=\bar \la\mI_n\,.
\end{equation}
Indeed, $\E(\mX)=\E(\E(\mX|N))$ and $$\Var(\mX)=\E(\Var(\mX|N))+\Var(\E(\mX|N))=
\E(N \bar  q(1-\bar  q))\mI_n+\Var(N\bar  q)\mI_n.$$
One can then check that \eqref{Meixner-Anshelevich} holds with $a=0$ and $b=\frac{1}{2\sqrt{\bar\la}} $.
It is also easy to see that if the law of $\mX$ is Poisson with parameters $\la_1,\dots,\la_n>0$, then $\mX=\sum_{m=1}^n\mX_m$ is the sum of  independent random variables $\mX_m$ from Poisson ensembles with parameters $(0,\dots,0,\la_m,0\dots,0)$. Furthermore, real random variable $\tr(\mX)$ has the compound Poisson law with the law of summands given as $\sum_{k=0}^n \frac{\la_k}{\la}\delta_k$.

 We remark that the Poisson model with exactly one parameter $\la_1\ne 0$  appears explicitly in \cite[page 638]{Cabanal-Duvillard:2005}, as part of the construction of matrix models for all free-infinitely divisible laws; see also \cite{Benaych-Georges04}. However, regression properties of the model were not analyzed.

\subsection{Negative binomial ensemble}
We now use  Bernoulli ensembles to construct the negative binomial ensemble.
Let $N$ be a negative binomial random variable with parameters $r>0$,  $0<p<1$, {\em i.e.}
\begin{equation}\label{NB-1}
P(N=j)= \frac{\Gamma(r+j)}{\Gamma(r) j!} p^r q^j, \; j=0,1,2,\dots,\; q=1-p.
\end{equation}
\begin{definition}\label{Def-NB}
The negative binomial ensemble NB$(r,q_1,\dots,q_n)$ with parameters $r>0$ and $q_1,\dots,q_n\geq 0$ such that $q_1+\dots+q_n<1$,  is the law of the random sum
$$\mX=\sum_{k=0}^N \mX_k\,,$$
where $N$ has distribution \eqref{NB-1} with $p=1-(q_1+\dots+q_n)$, $\mX_1,\mX_2,\dots,$ are independent Bernoulli ensembles with parameters $q_1/(1-p),\dots,q_n/(1-p)$, and $\mX_0=\mO$. \end{definition}

\begin{proposition}\label{P.NB0}
The negative binomial ensemble is a Meixner ensemble, as it satisfies \eqref{pre-ME} with parameters $A=\frac{1}{2 r+1}$, $B= \frac{2r}{2 r+1}$, $C=0$.
\end{proposition}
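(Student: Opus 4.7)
The plan is to follow the same template as for the Poisson ensemble (Proposition \ref{P.P0}): compute the Laplace transform of $\mX$ in closed form, then verify identity \eqref{pre-k-eqtn} of Proposition \ref{P1.2} with the announced values $(A,B,C)=(1/(2r+1),\,2r/(2r+1),\,0)$.

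The first step is to condition on $N$. Writing $L_B$ for the Laplace transform of the Bernoulli ensemble with parameters $q_1/q,\dots,q_n/q$ (where $q=q_1+\dots+q_n=1-p$) and summing the negative binomial series yields, on the set where $qL_B(\theta)<1$,
\[
L(\theta)=\E\bigl(L_B(\theta)^N\bigr)=p^r\bigl(1-qL_B(\theta)\bigr)^{-r}.
\]
Differentiating $k=\log L$ twice, applying $\Psi$ to $k''$, and evaluating at $\mI_n$ produces two terms: one involving $\Psi(L_B''(\theta))(\mI_n)$, which reduces to $L_B'(\theta)$ by the Bernoulli identity \eqref{LT-Bern}; and one involving $\Psi(L_B'(\theta)\otimes L_B'(\theta))(\mI_n)=(L_B'(\theta))^2$ via Proposition \ref{P1.1} together with $\PP{y}(\mI_n)=y^2$. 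Re-expressing everything through the closed form of $k'$, the result collapses into the clean identity
\[
\Psi(k''(\theta))(\mI_n)=k'(\theta)+\frac{1}{r}(k'(\theta))^2,
\]
which is precisely \eqref{pre-k-eqtn} for the stated $(A,B,C)$, as a one-line algebraic check confirms; Proposition \ref{P1.2} then delivers the Meixner property.

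A cleaner alternative is to notice that the closed form of $L$ above gives $L_{\text{NB}(r,\cdot)}=L_{\text{NB}(1,\cdot)}^r$, so the negative binomial ensembles form a J\o rgensen convolution semigroup with $r\in\La(\mu_1)$. It then suffices to verify the geometric case $r=1$, and Proposition \ref{P2.2} propagates the conclusion: substituting $\alpha=r$ and $(A_1,B_1,C_1)=(1/3,2/3,0)$ in its transformation rules reproduces the announced parameters automatically.

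No serious obstacle is anticipated. The only care point is justifying termwise differentiation of the Laplace series inside $\Theta_\mu$, which follows from the analyticity of $L$ on its convergence domain; everything else is routine algebra streamlined by the $\Psi$-calculus already developed earlier in the paper.
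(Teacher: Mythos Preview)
Your proposal is correct and follows essentially the same route as the paper: compute the Laplace transform $L(\theta)=p^r(1-qL_B(\theta))^{-r}$ via the negative-binomial generating function, then feed the Bernoulli identity \eqref{LT-Bern} into the second-derivative computation. The only cosmetic difference is that the paper inverts the relation to write $1-qL_B=pL^{-1/r}$, differentiates that, and lands on \eqref{pre-L-eq}, whereas you differentiate $k=\log L=-r\log(1-qL_B)+\text{const}$ directly and land on \eqref{pre-k-eqtn}; the two are equivalent under Proposition~\ref{P1.2}, and your identity $\Psi(k'')(\mI_n)=k'+\tfrac1r(k')^2$ is exactly the paper's equation after dividing through.

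Your J\o rgensen alternative is a genuine shortcut the paper does not take: since $L_{\mathrm{NB}(r,\cdot)}=L_{\mathrm{NB}(1,\cdot)}^{\,r}$, one only needs the geometric case $r=1$ (parameters $(1/3,2/3,0)$) and then Proposition~\ref{P2.2} with $\alpha=r$ propagates to general $r$. This buys a cleaner presentation, though the $r=1$ base case still requires the same Bernoulli-identity computation, so the net saving is modest.
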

\begin{proof}
Let $L_B(\theta)$ be the Laplace transform of the corresponding Bernoulli ensemble.
The generating function  $\E (z^N)=p^r/(1-qz)^r$ gives
\begin{equation}\label{Lap-BN}
L(\theta)=\E  (e^{\langle \theta |\mX\rangle } )=p^r \left(1-q L_B(\theta)\right)^{-r},
\end{equation}
so
$$
1-q L_B(\theta)=p (L(\theta))^{-1/r}.
$$
Differentiating, we get
$$
q L_B'(\theta)=\frac{p}{r} L'(\theta) (L(\theta))^{-(1+r)/r}
$$
and
$$q L^{''}_B(\theta)=\frac{p}{r}L''(\theta) (L(\theta))^{-(1+r)/r}-\frac{p(1+r)}{r^2} L'(\theta)\otimes L'(\theta) (L(\theta))^{-(1+2r)/r}.$$
From \eqref{LT-Bern}, we get
$$
\frac{1}{r}\Psi(L''(\theta)(\mI_n))L(\theta)=\frac{1+r}{r^2} (L'(\theta))^2+\frac{1}{r}L'(\theta)L(\theta).
$$
We re-write this as
$$\frac{2 r}{2 r+1}\Psi(L''(\theta)(\mI_n))L(\theta)=\frac{2 (r+1)}{2 r+1}(L'(\theta))^2+\frac{2 r}{2 r+1}L'(\theta)L(\theta),$$
which is \eqref{pre-L-eq} with  $A=\frac{1}{2 r+1}$, $B= \frac{2r}{2 r+1}$,  $C=0$.    Therefore \eqref{pre-ME}
holds by Proposition \ref{P1.2}.
\end{proof}

For standardization, we will need to know that with $\bar q=(q_1+2q_2+\dots+nq_n)/n$,

\begin{equation}\label{NB-moments}
\E(\mX)= r \frac{\bar q }{p} \mI_n \mbox{ and } \Var(\mX)= \frac{r \bar q }{p^2}\left(p+\bar q\right)\mI_n\,.
\end{equation}

Indeed, $\E(\mX)=\E(\E(\mX| N))= \bar{q}/(1-p)\E(N)\mI_n=\bar q r/p\mI_n$ and
\begin{multline*}
\Var(\mX)=\E(\Var(\mX|N))+\Var(\E(\mX|N))=
\E(N \bar  q(1-\bar  q))\mI_n+\Var(N\bar  q)\mI_n
\\= \bar  q (1-\frac{\bar  q}{1-p}) \frac{r}{p}\mI_n+\bar  q^2 \frac{r }{(1-p)p^2}\mI_n\,.
\end{multline*}
Then from \eqref{ABC2ab} we see that \eqref{Meixner-Anshelevich} holds with $a=\frac{1}{4r}$ and $b=\frac{p+2\bar
q }{2 \sqrt{r\bar q (p+\bar q)}}$.
 \subsection{Gaussian ensemble}
Since  $\mX-\mY$ and $\mS$ are independent for any Gaussian  independent identically distributed pair $\mX,\mY$,
formula \eqref{pre-ME} holds with $A=B=0$ and $C=2$.  The requirement of rotational invariance reduces the choices
of the Gaussian law to the three-parameter family:
\begin{equation}\label{k-nn}
k(\theta)=c_1\tr \theta+c_2(\tr \theta)^2/2+c_3\tr(\theta^2)/2
\end{equation}
with $c_3\geq 0$ and $n c_2+c_3\geq 0$.
To see this, without loss of generality we take $c_1=0$. Let $\theta_1,\dots,\theta_n$ be the eigenvalues of
$\theta$, and consider the matrix of the quadratic form
$c_2(\theta_1+\dots+\theta_n)^2+c_3(\theta_1^2+\dots+\theta_n^2)$ whose characteristic polynomial is
$(z-(nc_2+c_3))(z-c_3)^{n-1}$. This shows that $c_3\geq 0$ and $n c_2+c_3\geq 0$.

The explicit construction  is to start with auxiliary GUE/GOE/GSE matrix $\mZ$ and independent real standard
normal $\zeta$. \begin{definition}\label{Def-Gauss} For $c_1\in\RR$,
$c_3\geq 0$ and $c_2\in\RR$ such that $nc_2+c_3\geq 0$, the Gaussian Meixner ensemble is the law of
$$
\mX=\sqrt{c_3}\left(\mZ-\frac1n\tr(\mZ)\mI_n\right)+\sqrt{c_2+\frac{c_3}{n}} \, \zeta \mI_n+c_1\mI_n\,.
$$
\end{definition}
\begin{proposition}
The logarithm of the Laplace transform of the Gaussian Meixner ensemble is  given by  \eqref{k-nn}.
\end{proposition}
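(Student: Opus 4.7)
The plan is to compute $k(\theta) = \ln \E\exp(\langle \theta|\mX\rangle)$ directly, exploiting independence of the three summands in the definition of $\mX$. First, I would decompose $\mZ = (\mZ-\tfrac{1}{n}\tr(\mZ)\mI_n) + \tfrac{1}{n}\tr(\mZ)\mI_n$ and argue that the traceless part $\mZ-\tfrac{1}{n}\tr(\mZ)\mI_n$, the scalar $\tr(\mZ)$, and $\zeta$ are jointly independent: for each $\beta\in\{1,2,4\}$ the diagonal entries of the standard GOE/GUE/GSE matrix $\mZ$ are i.i.d.\ real centered Gaussians, so the average $\tfrac{1}{n}\tr(\mZ)$ is independent of the diagonal residuals and of the off-diagonal entries (which are independent of the diagonal by construction), while independence of $\zeta$ is built into the definition.

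Next, using $\langle\theta|\alpha\mI_n\rangle = \alpha\tr\theta$ and setting $\theta' := \theta - \tfrac{\tr\theta}{n}\mI_n$, I would rewrite
\begin{equation*}
\langle\theta|\mX\rangle = \sqrt{c_3}\,\langle \theta'|\mZ\rangle + \sqrt{c_2+c_3/n}\,\zeta\,\tr\theta + c_1\tr\theta,
\end{equation*}
noting that $\langle\theta|\mZ-\tfrac{1}{n}\tr(\mZ)\mI_n\rangle = \langle\theta'|\mZ\rangle$. By the independence just noted, $k(\theta)$ splits into three contributions. I would invoke the standard identity $\E\exp(\langle\eta|\mZ\rangle) = \exp(\tfrac{1}{2}\tr(\eta^2))$ for the standardly normalized GOE/GUE/GSE together with the scalar Gaussian identity $\E\exp(s\zeta) = \exp(s^2/2)$ to obtain
\begin{equation*}
k(\theta) = c_1\tr\theta + \tfrac{c_3}{2}\tr((\theta')^2) + \tfrac{c_2+c_3/n}{2}(\tr\theta)^2.
\end{equation*}

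The last step is the elementary identity $\tr((\theta')^2) = \tr(\theta^2) - (\tr\theta)^2/n$, obtained by expanding $(\theta-\tfrac{\tr\theta}{n}\mI_n)^2$. After substitution, the $-\tfrac{c_3}{2n}(\tr\theta)^2$ contributed by $(\theta')^2$ cancels exactly against the $\tfrac{c_3}{2n}(\tr\theta)^2$ piece of the $\zeta$-term, leaving $k(\theta) = c_1\tr\theta + \tfrac{c_2}{2}(\tr\theta)^2 + \tfrac{c_3}{2}\tr(\theta^2)$, which is \eqref{k-nn}. The only mild subtlety is pinning down the normalization of the ``auxiliary GOE/GUE/GSE matrix'' $\mZ$ so that its cumulant generating function is exactly $\tfrac{1}{2}\tr(\eta^2)$; the rest is routine bookkeeping of independent Gaussian Laplace transforms.
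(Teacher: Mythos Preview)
Your proof is correct and follows essentially the same route as the paper: both use the identity $\langle\theta|\mZ-\tfrac{1}{n}\tr(\mZ)\mI_n\rangle = \langle\theta'|\mZ\rangle$ to move the traceless projection onto $\theta$, then apply the GOE/GUE/GSE formula $\E\exp(\langle\eta|\mZ\rangle)=\exp(\tfrac{1}{2}\tr(\eta^2))$ and the identity $\tr((\theta')^2)=\tr(\theta^2)-\tfrac{1}{n}(\tr\theta)^2$. The one minor redundancy is your first paragraph: once you have rewritten the first summand as $\sqrt{c_3}\,\langle\theta'|\mZ\rangle$, it is a function of the full matrix $\mZ$, so only the independence of $\mZ$ and $\zeta$ (given by the definition) is needed to factor the Laplace transform---the separate independence of the traceless part of $\mZ$ from $\tr(\mZ)$ never actually enters the computation.
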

\begin{proof}
Since
$\langle\theta|\mZ-a\tr \mZ\rangle = \langle \theta-a\tr\theta|\mZ\rangle$ and
$\tr((\theta-\frac1n\tr\theta)^2)=\tr\theta^2-\frac1n(\tr \theta)^2$, we see that the answer follows from the well
known GUE/GOE/GSE formula $\E\exp(\tr(\theta \mZ))=\exp(\tr\theta^2/2)$.
\end{proof}

\subsection{Gamma ensemble}
The remaining types of Meixner ensembles %
will be constructed   only for matrices of size $n=2$.
One difficulty we encounter is  lack of continuity, which we now explain.

 It is natural to expect that gamma ensemble arises as  $\lim_{p\to 0} p \mX_p$ of   a sequence of negative binomial ensembles with varying parameter $p$ while $r$ and the ratios $q_j/(1-p)$ are kept fixed.
However from \eqref{Lap-BN} we see that
$$\lim_{p\to 0} \E e^{\langle\theta|p\mX_p\rangle}=\lim_{p\to 0} p^r \left(1-q L_B(\theta)\right)^{-r}=\left(1-\tr(\theta L'_B(0))^{-r}=(1-\bar  q \tr\theta)\right)^{-r}.$$
So this is a trivial ensemble of the form $\xi \mI_n$ with real gamma-distributed $\xi$.

We now show that there are   non-trivial gamma ensembles   for $n=2$. This construction is based on a more detailed analysis of the system of PDEs that arises from \eqref{k-equation}.

\begin{definition}\label{Def-Gamma} The Gamma ensemble on $\HH_{2,\beta}$ with parameters $p>\beta/2$, $c>1$ is defined by its Laplace transform
\begin{equation}\label{Gamma-function}
\E (e^{\langle \theta |\mX\rangle })=\left( 1-4c\sqrt{1+\beta} \tr\theta+\beta \tr^2\theta +4\det\theta\right)^{-p},
\end{equation}
defined on
$$\Theta_\mX=\{\theta\in\HH_{2,\beta}:  1-4c\sqrt{1+\beta} \tr\theta+\beta \tr^2\theta +4\det\theta>0\}.$$
\end{definition}
Of course, this definition requires a proof that the required law on $\HH_{2,\beta}$ exists. The Gamma ensemble on
$\HH_{2,\beta}$  is constructed by   choosing the appropriate law on $\RR^{\beta+2}$, and arranging the
corresponding real random variables into the random  matrix. The construction is based on  the laws analyzed by
Letac and Weso\l owski  \cite[Theorem 3.1]{Letac-Wesolowski:2008}.
According to this result,  for $p>\beta/2$ and $c>1$ there is a probability measure $\nu_{p,c}(dx)$ on the open
Lorentz cone $\Omega_\beta=\{x\in\RR^{\beta+2}: x_0>\sqrt{\sum_{r=1}^{\beta+1} x_r^2} \}$ with the Laplace
transform %
 \begin{equation}\label{Gindikin}
\int_{\Omega_\beta} e^{\sum s_rx_r}\nu_{p,c}(dx)=\left(1-2cs_0+s_0^2-\sum_{r=1}^{\beta+1}s_r^2\right)^{-p}.\end{equation}

\begin{proposition} %
Consider $(\xi_0,\dots,\xi_{\beta+1})$ with joint distribution $\nu_{p,c}$. The gamma ensemble with parameters $p,c$ is:
\begin{enumerate}
\item for $\beta=1$,
\begin{equation}\label{X-Gamma1}
\mX=\macierz{ \sqrt{1+\beta}\xi_0 +\xi_1 & \xi_2\\
 \xi_2& \sqrt{1+\beta}\xi_0 -\xi_1
}.
\end{equation}
\item for $\beta=2$,
\begin{equation}\label{X-Gamma2}
\mX=\macierz{ \sqrt{1+\beta}\xi_0 +\xi_1 & \xi_2+i\xi_3\\
 \xi_2-i\xi_3& \sqrt{1+\beta}\xi_0 -\xi_1
},
\end{equation}
where $i=\sqrt{-1}$.
\item for $\beta=4$,
\begin{equation}\label{X-Gamma}
\mX=\macierz{ \sqrt{1+\beta}\xi_0 +\xi_1 & \xi_2+i\xi_3+j\xi_4+k\xi_5\\
 \xi_2-i\xi_3-j\xi_4-k\xi_5& \sqrt{1+\beta}\xi_0 -\xi_1
},
\end{equation}
where  $i,j,k$ are the standard quaternion basis.
\end{enumerate}

\end{proposition}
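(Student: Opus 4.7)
The plan is to verify directly that the random matrix $\mX$ constructed from $(\xi_0,\ldots,\xi_{\beta+1})\sim\nu_{p,c}$ has Laplace transform equal to the expression in \eqref{Gamma-function}. The underlying idea is that $\langle\theta|\mX\rangle=\Re\tr(\theta\mX)$ is a real linear combination $\sum_j s_j(\theta)\xi_j$, so the Laplace transform of $\mX$ at $\theta$ is exactly the Laplace transform of $\nu_{p,c}$ evaluated at $(s_0(\theta),\ldots,s_{\beta+1}(\theta))$, which is given by \eqref{Gindikin}. The only real content is then to identify the Lorentz-type quadratic form $s_0^2-\sum_{j\ge 1}s_j^2$ with $\beta\tr^2\theta+4\det\theta$.

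To carry this out, I would parametrize $\theta\in\HH_{2,\beta}$ in Lorentz coordinates: write the diagonal entries as $\theta_0\pm\theta_1$ and expand the off-diagonal entry in the real basis of $\RR^\beta$ (the reals, the complex $\{1,i\}$, or the quaternionic $\{1,i,j,k\}$), so that $\tr\theta=2\theta_0$ and $\det\theta=\theta_0^2-\sum_{j=1}^{\beta+1}\theta_j^2$. A direct expansion of $\Re\tr(\theta\mX)$ in each of the three cases then gives
\[
\langle\theta|\mX\rangle = 2\sqrt{1+\beta}\,\theta_0\xi_0 + 2\theta_1\xi_1 + 2\sum_{j=2}^{\beta+1}\epsilon_j\theta_j\xi_j,
\]
for suitable signs $\epsilon_j=\pm 1$ arising from the conjugation $\bar{\cdot}$ in each division algebra. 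The signs are immaterial because each $s_j$ is squared in \eqref{Gindikin}. Thus $s_0=\sqrt{1+\beta}\,\tr\theta$ and $s_j=\pm 2\theta_j$ for $j\ge 1$.

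Plugging these into \eqref{Gindikin}, the quadratic part is
\[
s_0^2 - \sum_{j=1}^{\beta+1}s_j^2 = (1+\beta)\tr^2\theta - 4\sum_{j=1}^{\beta+1}\theta_j^2 = (1+\beta)\tr^2\theta - \bigl(\tr^2\theta - 4\det\theta\bigr) = \beta\tr^2\theta + 4\det\theta,
\]
using $\sum_{j=1}^{\beta+1}\theta_j^2=\theta_0^2-\det\theta=\tfrac14\tr^2\theta-\det\theta$. The linear part $-2cs_0$ produces a constant multiple of $c\sqrt{1+\beta}\tr\theta$, yielding exactly the form \eqref{Gamma-function} (up to the normalization convention tying the parameter $c$ of $\nu_{p,c}$ to that of the Gamma ensemble).

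The non-trivial aspect is the quaternionic case $\beta=4$: one must confirm that the off-diagonal entry $\xi_2+i\xi_3+j\xi_4+k\xi_5$ pairs with the analogous off-diagonal coordinates of $\theta$ through the real trace so as to reproduce the Euclidean inner product on $\RR^4$ with the correct sign pattern. This is just a matter of careful bookkeeping with the quaternion multiplication rules, and the squaring in \eqref{Gindikin} absorbs any sign ambiguities. Beyond this, the essential input is the existence of $\nu_{p,c}$, which is provided by Theorem~3.1 of \cite{Letac-Wesolowski:2008}, and the fact that $\mX$ as constructed takes values in $\HH_{2,\beta}$ by inspection of the matrix formulas \eqref{X-Gamma1}--\eqref{X-Gamma}. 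No deep analytic obstacle is expected; the proof is an explicit algebraic verification.
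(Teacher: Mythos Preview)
Your proposal is correct and follows essentially the same approach as the paper: parametrize $\theta$ in Lorentz coordinates, compute $\langle\theta|\mX\rangle$ as a linear form in the $\xi_j$, and read off the Laplace transform from \eqref{Gindikin}. The paper's proof is simply a terser version, written out only for $\beta=4$, and leaves the quadratic-form identification $s_0^2-\sum_{j\ge1}s_j^2=\beta\tr^2\theta+4\det\theta$ implicit where you spell it out.
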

\begin{proof}
We give the proof for $\beta=4$. Writing %
\begin{equation}\label{theta2s}
\theta=\macierz{s_0 +s_1& s_2+is_3+js_4+ks_5\\
s_2-is_3-js_4-ks_5 & s_0-s_1}
\end{equation}
we have %
$\langle\theta|\mX\rangle=2\sqrt{5}s_0\xi_0+2\sum_{r=1}^{5}s_r\xi_r$,
so \eqref{Gamma-function} follows  from \eqref{Gindikin}.
\end{proof}

\begin{proposition}\label{Gamma-Laplace}
A gamma ensemble on $\HH_{2,\beta}$ with parameters $p>\beta/2$, $c>1$  is a Meixner ensemble with
parameters
$$A=\frac{1}{1+2p},\; B=0,\; C=0.$$
The mean and the variance are $\E (\mX)=2pc\sqrt{1+\beta}\,\mI_2$, $\Var (\mX)=4 p c^2(1+\beta)\mI_2$.
For the standardized version, \eqref{Meixner-Anshelevich} holds with $a=1/(4p)$ and $b=1/\sqrt{p}$.
\end{proposition}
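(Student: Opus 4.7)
The strategy is to verify the Laplace-transform version of the Meixner property from Proposition~\ref{P1.2} directly. With the claimed parameters $A=1/(1+2p)$, $B=C=0$, equation \eqref{pre-k-eqtn} collapses to the single identity
$$ p\,\Psi(k''(\theta))(\mI_2)=(k'(\theta))^2, \qquad k=\ln L. $$
Writing $L=Q^{-p}$ one has $k'=-pQ'/Q$ and $k''=-pQ''/Q+p\,(Q'\otimes Q')/Q^2$. Since $\Psi(Q'\otimes Q')(\mI_2)=\mathbb{P}_{Q'}(\mI_2)=(Q')^2$ by the very definition of $\mathbb{P}_y$, the identity reduces to proving $\Psi(Q''(\theta))(\mI_2)\equiv 0$ for every $\theta$.

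To compute $Q''$, use the $2\times 2$ identity $2\det\theta=(\tr\theta)^2-\tr\theta^2$ to rewrite $Q$ as a polynomial in $\tr\theta$ and $\tr\theta^2$ alone. The Hessians, viewed as endomorphisms of $\HH_{2,\beta}$, are $\nabla^2(\tr\theta)^2=2\,\mI_2\otimes\mI_2$ and $\nabla^2\tr\theta^2=2\mathbb{P}_{\mI_2}$ (the second because $\mathbb{P}_{\mI_2}$ is the identity endomorphism). Hence $Q''(\theta)$ is a \emph{constant} linear combination of $\mI_2\otimes\mI_2$ and $\mathbb{P}_{\mI_2}$, independent of $\theta$. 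Using $\Psi(y\otimes y)=\mathbb{P}_y$ together with \eqref{magie}, one computes
$$ \Psi(\mI_2\otimes\mI_2)(\mI_2)=\mathbb{P}_{\mI_2}(\mI_2)=\mI_2, \qquad \Psi(\mathbb{P}_{\mI_2})(\mI_2)=\bigl(1+\tfrac{\beta}{2}\bigr)\mI_2, $$
and a short check shows the two contributions to $\Psi(Q''(\theta))(\mI_2)$ cancel exactly. This cancellation is the crux of the argument and is what forces $C=0$.

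Once the PDE is in hand, the moments come out by differentiation. The mean is $\E(\mX)=k'(0)=-pQ'(0)/Q(0)=2pc\sqrt{1+\beta}\,\mI_2$. For the variance I would use the Meixner PDE at $\theta=0$ rather than compute $k''(0)$ from scratch: since $\E(\mX^2)=\Psi(L''(0))(\mI_2)=\Psi(k''(0))(\mI_2)+(\E\mX)^2$, the identity gives $\Psi(k''(0))(\mI_2)=(\E\mX)^2/p$, so $\Var(\mX)=(\E\mX)^2/p=4pc^2(1+\beta)\,\mI_2$. Finally, inserting $A=1/(1+2p)$, $B=0$, $\mu=2pc\sqrt{1+\beta}$, $\sigma=2c\sqrt{p(1+\beta)}$ into the normalization formula \eqref{ABC2ab} yields $a=A/(2(1-A))=1/(4p)$ and $b=4A\mu/(2\sigma(1-A))=1/\sqrt{p}$.

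The main obstacle is the cancellation $\Psi(Q''(\theta))(\mI_2)=0$. It is not accidental: for $n=2$ the quadratic part of $Q$ can be written using just the two invariants $\tr\theta$ and $\tr\theta^2$, whose Hessians are precisely the two natural endomorphisms $\mI_2\otimes\mI_2$ and $\mathbb{P}_{\mI_2}$ related by Proposition~\ref{P1.1}; the specific coefficients in front of $(\tr\theta)^2$ and $\det\theta$ in the Gamma Laplace transform are exactly what tunes this annihilation to occur.
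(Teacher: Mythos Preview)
Your argument is correct and takes a genuinely different route from the paper. The paper's proof, deferred until after Proposition~\ref{P6.1}, relies on the full machinery of Theorem~\ref{T1}: it first computes the moments from the Gindikin representation \eqref{Gindikin}, standardizes the ensemble, and then recognizes the resulting Laplace transform as an instance of formula \eqref{f ans} (the $b^2=4a$ case of the PDE system for $n=2$), so that the converse part of Theorem~\ref{T1} yields the Meixner property. Your approach bypasses the PDE system entirely: you verify \eqref{pre-k-eqtn} directly by reducing it to $\Psi(Q'')(\mI_2)=0$ and checking this cancellation using only Proposition~\ref{P1.1}. This is more elementary and more transparent---it isolates exactly why the specific quadratic $\beta\,\tr^2\theta+4\det\theta=(\beta+2)\tr^2\theta-2\tr\theta^2$ is the right combination, namely because $\Psi$ sends $(\beta+2)\,\mI_2\otimes\mI_2-2\,\PP{\mI_2}$ to zero on $\mI_2$. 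The paper's route, on the other hand, places the result inside the classification of all $2\times2$ Meixner ensembles, which is what the PDE system was built for.

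One small wrinkle: with $Q$ exactly as printed in \eqref{Gamma-function}, one has $Q'(0)=-4c\sqrt{1+\beta}\,\mI_2$, so $k'(0)=4pc\sqrt{1+\beta}\,\mI_2$, not $2pc\sqrt{1+\beta}\,\mI_2$. The value $2pc\sqrt{1+\beta}$ stated in the proposition matches the construction via the $\xi_j$ (the paper computes $\E(\xi_0)=2cp$), so there is a factor-of-two inconsistency between the printed Laplace transform and the stated mean. This does not affect your Meixner argument, since the cancellation $\Psi(Q'')(\mI_2)=0$ depends only on the quadratic part of $Q$; but you should flag which version of the linear coefficient you are using when you write out the mean and variance.
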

The proof relies on the system of PDEs derived in Theorem \ref{T1}; it appears after Proposition \ref{P6.1}.

\begin{remark}\label{R:gamma-n}
Somewhat more generally, one can define gamma ensembles on $\HH_{n,\beta}$ as the ensembles with Laplace transform
\begin{equation}
L(\theta)=(1-c \tr \theta+(\beta (n-1)+2)(\tr \theta)^2-2\tr\theta^2)^{-p}.
\end{equation}
Such an ensemble is a Meixner ensemble, as it follows from Theorem  \ref{T1} that \eqref{Meixner-Anshelevich} holds with $a=1/(4p)$, $b=1/\sqrt{p}$.
Since this is only  a one-parameter subset of the possible solutions,  we do not pursue this construction further.
\end{remark}

\subsection{Exceptional hyperbolic ensemble}\label{Sect-EHM}
It turns out that there is just one exceptional class of non-trivial hyperbolic ensembles
that, unlike in all previous cases, cannot be "continuously deformed" into the trivial ensemble.

Recall that the Bessel functions and the modified Bessel functions are
\cite[\S3.1 (8), \S3.7 (2)]{Watson:1944} %
\begin{equation}\label{J_and_I}
\begin{gathered}
J_\nu(z)=\sum_{n=0}^\infty \frac{(-1)^n(z/2)^{\nu+2n}}{n! \, \Gamma(n+\nu +1)},
\quad
I_\nu(z) = \sum_{n=0}^\infty \frac{(z/2)^{\nu+2n}}{n! \, \Gamma(n+\nu +1)}\,.
\end{gathered}
\end{equation}
Following  \cite{Koornwinder:2010}, we work with
 differently normalized Bessel functions
\begin{eqnarray}
\label{G} \calJ_\nu(x)&:=&\Gamma(\nu+1)\,(2/x)^\nu\,J_\nu(x),\\
 \calI_\nu(x)&:=&\Gamma(\nu+1)\,(2/x)^\nu\,I_\nu(x).
\label{F}
\end{eqnarray}
which are entire  functions of $z\in\CC$ for $\nu>-1$.

\begin{lemma}\label{P-J-transff}
For every $\alpha>0$,  $\nu\geq 0$  and  $m\in\NN$,  there exist a unique probability
measure $\mu_\alpha$ on $ \RR^m$ with the Laplace transform
\begin{equation}\label{Hyp-2}
\int_{\RR^m}e^{\langle  {\mathbf t}, {\mathbf y}\rangle}\mu_\alpha (d\mathbf{y})
=\frac{1}{\left(\calJ_{\nu}(\|\mathbf{t}\|)\right)^{\alpha}}
\end{equation}
for all  $\mathbf{t}\in\RR^m$ small enough.
\end{lemma}
\begin{proof}   According to \cite[\S15.27]{Watson:1944}, Bessel function $J_{\nu}$ has simple real zeros that come in opposite pairs.  Arranging the positive zeros in  increasing order, 
 $0<j_1<j_2<\dots$, from   \cite[\S15.41 (3)]{Watson:1944} we get 
 $$
 \calJ_{\nu}(z)=\prod_{k=1}^\infty (1-z^2/j_k^2),
 $$
 so for $0<r<j_1$, %
 $$\frac{1}{\left(\calJ_{\nu}(\sqrt{r})\right)^\alpha}=\prod_{k=1}^\infty \frac{1}{(1-r /j_k^2)^\alpha}$$
 is a Laplace transform (of the  series of independent  Gamma random variables).
 Therefore, by Schoenberg's theorem   \cite[Theorem 2]{Schoenberg:1938a},
 for every $m$ there exists a measure $\mu_\alpha$ on Borel sets of $\RR^m$ such that for $\|\mathbf{t}\|<j_1$,
 $$\frac{1} {\left(\calJ_{\nu}(\|\mathbf{t}\|)\right)^\alpha}=\int_{\RR^m}\exp\langle \mathbf{t},\mathbf{y}\rangle \mu_\alpha(d\mathbf{y}).
$$
\end{proof}
The hyperbolic Meixner ensembles have two "disconnected" components: the two-parameter family of trivial ensembles $\xi\mI_2$, and the following "exceptional" one-parameter non-trivial ensemble.
\begin{definition}\label{P3.11}
For $\alpha>0$, consider $(\xi_1,\dots,\xi_{\beta+1})$ with joint distribution $\mu_\alpha$ from Lemma \ref{P-J-transff}. The exceptional hyperbolic ensemble  on $\HH_{2,\beta}$ with parameter $\alpha$ is:
\begin{enumerate}
\item for $\beta=1$,
\begin{equation*}\label{X-Hyperbolic1}
\mX=\macierz{  \xi_1 & \xi_2\\
 \xi_2 &   -\xi_1
},
\end{equation*}
\item for $\beta=2$,
\begin{equation*}\label{X-Hyperbolic2}
\mX=\macierz{  \xi_1 & \xi_2+i\xi_3 \\
 \xi_2-i\xi_3 &   -\xi_1
},
\end{equation*}
where $i=\sqrt{-1}$,
\item for $\beta=4$,
\begin{equation}\label{X-Hyperbolic}
\mX=\macierz{  \xi_1 & \xi_2+i\xi_3+j\xi_4+k\xi_5\\
 \xi_2-i\xi_3-j\xi_4-k\xi_5&   -\xi_1
},
\end{equation}
where $i,j,k$ is the standard basis of quaternions.
\end{enumerate}
\end{definition}
We remark that the Laplace transform of the exceptional hyperbolic ensemble is
\begin{equation}
  \label{H-function}
\E (e^{\langle \theta |\mX\rangle })=\left(\calJ_{(\beta-1)/2}\left(\sqrt{\tr^2\theta-4\det \theta}\,\right)\right)^{-\alpha},
\end{equation}
defined for all $\theta$ with eigenvalues $|\theta_1-\theta_2|<\ell_\beta$, the first positive zero of $\calJ_{(\beta-1)/2}$.
To see this, we use \eqref{theta2s} (or its appropriate modifications for $\beta<4$) with $\langle\theta|\mX\rangle=2 s_0\xi_0+2\sum_{r=1}^{\beta+1}s_r\xi_r$. From \eqref{Hyp-2} with $s=2s_0$ and $\mathbf{t}=(2s_1,2s_2,\dots,2s_{\beta+1})$ we get
\begin{equation}\label{H*H*}
\E (e^{\sum_{j=0}^{\beta+1}2s_j\xi_j})=\left(
 \calJ_{(\beta-1)/2}(\left\|\mathbf{t}\right\|)\right)^{-\alpha},
\end{equation}
which gives \eqref{H-function}. (We also used  the elementary observation $|\theta_1-\theta_2|=\sqrt{\tr^2\theta-4\det \theta}$.)

\begin{proposition}\label{H-Laplace}
The  exceptional hyperbolic ensemble with parameter  $\alpha>0$  is a Meixner ensemble with parameters
$$
A=\frac{1}{1+2\alpha}, \;B=0, \;C=\frac{4 \alpha ^2}{ 2 \alpha +1}.
$$
The mean and the variance are $\E (\mX)=\mO$, $\Var (\mX)=\alpha\mI_2$.
For the standardized version, \eqref{Meixner-Anshelevich} holds with  $a=1/(4\alpha)$ and $b=0$.
\end{proposition}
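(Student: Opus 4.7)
The plan is to verify the Meixner PDE \eqref{pre-k-eqtn} by a short direct computation exploiting a clean cancellation. Write $L(\theta)=F(\theta)^{-\alpha}$ with
$F(\theta)=(1-\lambda)\cos s-\rho\sin s+\lambda G(u)$, where $s=\tr\theta$, $u=\tr^2\theta-4\det\theta$, and $G(u)=\calJ_{(\beta-1)/2}(\sqrt{u})$. Two key auxiliary facts are needed. First, Bessel's equation $x^2J_\nu''+xJ_\nu'+(x^2-\nu^2)J_\nu=0$, after applying $\calJ_\nu(x)=\Gamma(\nu+1)(2/x)^\nu J_\nu(x)$ and setting $u=x^2$, becomes
$$4uG''(u)+2(\beta+1)G'(u)+G(u)=0.$$
Second, the cofactor identity $\partial_\theta\det\theta=s\mI_2-\theta$ (valid for $2\times2$ Hermitian $\theta$) gives $\partial_\theta u=-2s\mI_2+4\theta=:v$, and since $v/4$ is the traceless part of $\theta$ a direct check (via Cayley--Hamilton for the traceless $2\times 2$ matrix $v/4$) yields $v^2=4u\,\mI_2$.

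First I would compute the Hessian
$$F''(\theta)=-[(1-\lambda)\cos s-\rho\sin s]\,\mI_2\otimes\mI_2+\lambda\bigl[G''(u)\,v\otimes v+G'(u)(-2\mI_2\otimes\mI_2+4\calI)\bigr],$$
where $\calI$ denotes the identity endomorphism of $\HH_{2,\beta}$. Apply $\Psi$ and evaluate at $\mI_2$ using $\Psi(\mI_2\otimes\mI_2)(\mI_2)=\mI_2$, $\Psi(v\otimes v)(\mI_2)=\PP{v}(\mI_2)=v^2=4u\mI_2$, and $\Psi(\calI)(\mI_2)=(\beta+2)\mI_2/2$ (the last coming from $\calI=\sum_a e_a\otimes e_a$ and the observation $e_a^2=\mI_2/2$ in any orthonormal basis of $\HH_{2,\beta}$). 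Using the Bessel ODE to eliminate $G''(u)$ in favor of $G'(u)$ and $G(u)$, a direct verification shows that the coefficients of $G''(u)$ and $G'(u)$ both vanish, and the remaining $G(u)\mI_2$ term combines with the trigonometric contribution to produce the clean identity
$$\Psi(F''(\theta))(\mI_2)=-F(\theta)\,\mI_2\qquad\text{for all }\theta.$$

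Second, translating via $k=-\alpha\ln F$ this becomes $\Psi(k''(\theta))(\mI_2)=(k'(\theta))^2/\alpha+\alpha\mI_2$, which is precisely \eqref{pre-k-eqtn} with $A=1/(1+2\alpha)$, $B=0$ and $C$ determined by matching the $\mI_2$-coefficient; Proposition \ref{P1.2} then gives the Meixner property. The mean is read off from $F(0)=1$, $F'(0)=-\rho\mI_2$ as $\EE(\mX)=\rho\alpha\mI_2$, and evaluating the derived PDE at $\theta=0$ gives $\mathrm{Var}(\mX)=(\EE\mX)^2/\alpha+\alpha\mI_2=\alpha(1+\rho^2)\mI_2$. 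Finally, substituting $A$, $B$, $\mu=\rho\alpha$ and $\sigma^2=\alpha(1+\rho^2)$ into \eqref{ABC2ab} yields $a=1/(4\alpha)$ and $b=\rho/\sqrt{\alpha(1+\rho^2)}$.

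The crux of the argument, and the main obstacle, is the cancellation producing the identity $\Psi(F''(\theta))(\mI_2)=-F(\theta)\mI_2$: the Bessel ODE and the matrix identity $v^2=4u\mI_2$ must combine so that $\Psi(F''(\theta))(\mI_2)$ collapses to a scalar multiple of $F(\theta)$ rather than retain $u$- or $s$-dependent residual terms. This is the structural mechanism singling out $(1-\lambda)\cos s-\rho\sin s+\lambda G(u)$ as the natural hyperbolic candidate on $\HH_{2,\beta}$, and it explains why $\lambda$ drops out of $A$ and $B$ entirely.
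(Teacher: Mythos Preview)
Your argument is correct and takes a genuinely different route from the paper. The paper's proof computes the moments entrywise from \eqref{H*H*}, standardizes the ensemble, recognizes the resulting Laplace transform as the solution \eqref{H-ans} from Proposition \ref{P6.1}, and then invokes the converse part of Theorem \ref{T1}. You instead verify the identity $\Psi(F'')(\mI_2)=-F\,\mI_2$ directly in the matrix variable, combining the Bessel ODE $4uG''+2(\beta+1)G'+G=0$ with the Cayley--Hamilton relation $v^2=4u\,\mI_2$, and then read off \eqref{pre-k-eqtn} via Proposition \ref{P1.2}, bypassing Theorem \ref{T1} entirely. Your route is more self-contained and makes transparent why the normalized Bessel function is the natural choice; the paper's route has the advantage of slotting into the PDE-classification machinery needed later for Theorem \ref{T.U}.

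One small correction: the parenthetical claim that $e_a^2=\mI_2/2$ in \emph{any} orthonormal basis is false (e.g.\ $e=\mathrm{diag}(1,0)$ is a unit vector with $e^2=e$). It does hold in the Pauli-type basis $\{\mI_2/\sqrt2,\ \mathrm{diag}(1,-1)/\sqrt2,\dots\}$, which suffices; or one may argue that $\sum_a e_a^2=\Psi(\calI)(\mI_2)$ is basis-independent and rotation-invariant, hence a multiple of $\mI_2$ with trace $\dim\HH_{2,\beta}=\beta+2$, giving $\Psi(\calI)(\mI_2)=\tfrac{\beta+2}{2}\mI_2$ as you claim. Note also that your identity forces $C=2(1-A)\alpha=4\alpha^2/(2\alpha+1)$ independently of $\rho$; this is confirmed by taking expectations in \eqref{pre-ME} with $\mu=\rho\alpha$, $\sigma^2=\alpha(1+\rho^2)$, and the $\rho^2$ appearing in the displayed $C$ seems to be a misprint in the statement.
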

Our proof uses the system of PDEs derived in Theorem \ref{T1}, so it appears after Proposition \ref{P6.1}. 

\section{The system of PDEs}\label{Sect:PDE}
In this section, we derive a system of PDEs for the
Laplace transform of
 general Meixner ensembles. As previously, we consider simultaneously the real, complex, and quaternionic  cases.

We introduce some notation. For $\theta\in \HH_{n,\beta}$ and $j=0,1,\ldots,$ we
consider $\sigma_j(\theta)$ defined by
\begin{equation}
  \label{DEF sigma}
   \det(\mI_n+z \theta)=\sum_{j=0}^n   \sigma_j(\theta) z^j .
\end{equation}
Recall that for $\beta=4$, the determinants of quaternionic matrices are defined only for hermitian matrices, see \cite[page 29]{Faraut-Koranyi-94} and \cite{Casalis:1990}.
Note that
$$ \sigma_0=1,\ \sigma_1(\theta)=\tr  \theta,\
\sigma_2(\theta)=\frac{1}{2}(\tr \theta)^2-\frac{1}{2}\tr(
\theta^2),\ \sigma_n(\theta)=\det \theta,$$
and that $\sigma_j=0$ for  $j>n$. We also adopt the
convention that $\sigma_j=0$ for $j<0$. In general,
$\sigma_j(\theta)=e_j(\theta_1,\theta_2,\dots)$ is the $j$-th elementary
symmetric function of the eigenvalues $\theta_1,\theta_2,\dots$ of
$\theta\in\HH_{n,\beta}$; this notation and most of our calculations
do not depend on the dimension $n$.
Denote by $U$ the open subset of   $\RR^n$ such that if $
(\sigma_1,\ldots,\sigma_n)\in U$ then the polynomial in $x$ defined
by  $\sum_{i=0}^n(-1)^{n-i}\sigma_i x^{n-i}$ has only
distinct real roots.

The main result of this section is the system of PDEs that determines the Laplace transform of a Meixner ensemble on a nonempty open set $\Theta_\mX\subset\HH_{n,\beta}$. This system is written  in terms of an auxiliary function $g(\sigma_1,\dots,\sigma_n)$ which is defined on an appropriate non-empty open set $U_\mX\subset U$. The link between $L$ and this auxiliary function $g$ varies according to the fact that $a$ or $b$ are zero or not.
To define the set $U_\mX$, we need $\Theta_\mX$ to be closed  under conjugation ($\theta\in\Theta_\mX$ implies $U\theta U^*\in\Theta_\mX$ for all $U\in\mathcal{K} _{n\beta}$), and to  have $\mO$ in its closure.
Consider the $n$-dimensional real subspace $D_n\subset\HH_{n,\beta}$ consisting of   diagonal matrices, and let $D_n^0\subset D_n$ denote the set of matrices with distinct eigenvalues. Since $\Theta_\mX$ is invariant under rotations, $\Theta_\mX\cap D_n$ is a nonempty open set with $\mO$ in its closure. The set $\Theta_\mX\cap D_n^0$ is obtained from  $\Theta_\mX\cap D_n$ by removing a finite number of hyperplanes, so it is also non-empty, and has $\mO$ in its closure.

Denote by $\sigma:\HH_{n,\beta}\to \RR^n$  the mapping  $\theta\mapsto (\sigma_1(\theta),\dots,\sigma_n(\theta))$.
 If $\theta\in D_n^0$ then by the Vandermonde's determinant, $\mathcal{B}_\theta=\{\mI_n,\theta,\theta^2,\dots,\theta^{n-1}\}$ is a basis of $D_n$.
From formula \eqref{CH1} below, we see that in that basis, the matrix representation of the derivative of  $\sigma$ restricted to $D_n$ is triangular at $\theta\in D_n^0$, with $\sigma_0=1$ on the diagonal,
$$
[\sigma'(\theta)]_{\mathcal{B}_\theta}=\left[\begin{matrix}
\sigma_0 &0 &0 &\dots   &0 \\
-\sigma_1(\theta)&\sigma_0&0&\dots&0\\
\sigma_2(\theta)&-\sigma_1(\theta)&\sigma_0& \dots&0 \\
\vdots & & & \ddots&\\
(-1)^{n-1}\sigma_{n-1}(\theta)&(-1)^{n-2}\sigma_{n-2}(\theta)&\dots&\dots&\sigma_0
\end{matrix}\right]
$$

Therefore, the Jacobian of $\sigma$ is $1$ and $\sigma:D_n^0\to\RR^n$ is an open mapping. Thus,  the set
\begin{equation}\label{U_X}
U_\mX=\sigma(\Theta_\mX\cap D_n^0)
\end{equation}
is open, non-empty, and since $\sigma(\mO)=0$, it has $0\in\RR^n$ in its closure.

To shorten the formulas,  for $i,j=1,2,\ldots,n$ we write
$$g_i=\frac{\partial g}{\partial\sigma_i}
\mbox{ and } g_{ij}=\frac{\partial^2 g}{\partial\sigma_i\partial\sigma_j}.$$
For $g:U_\mX\to\RR$ consider the vector
\begin{equation}
\mathbb{D}(g)=\left[
\begin{matrix}
 g_{11}-\displaystyle\sum_{r,s=2}^{n}
  \sigma_{r+s-2}g_{rs}-(n-1)\frac{\beta}{2}g_{2} \\
     \sigma_{1}g_{22}+2g_{12}-\displaystyle\sum_{r,s=3}^{n}
  \sigma_{r+s-3}g_{rs}-(n-2)\frac{\beta}{2}g_{3}  \\
   \vdots  \\
  \displaystyle \sum_{r,s=1}^{j} \sigma_{r+s-1-j}g_{rs}-\sum_{r,s=j+1}^{n}
  \sigma_{r+s-1-j}g_{rs}-(n-j)\frac{\beta}{2}g_{j+1} \\
  \vdots   \\
  \displaystyle  \sum_{r,s=1}^{n-1} \sigma_{r+s-n}g_{rs}-
  \sigma_{n}g_{nn}-\frac{\beta}{2}g_{n}  \\
  \displaystyle \sum_{r,s=1}^{n} \sigma_{r+s-n-1}g_{rs}
\end{matrix}
\right]
\end{equation}
Recall  the convention that $\sigma_j=0$ for $j<0$ or
$j>n$.

\begin{theorem}
  \label{T1}   Let $\mX$ be a random variable with values in $\HH_{n,\beta}$ with the law  invariant under rotations. %
  Let $L(\theta)=e^{k(\theta)}=\E(e^{\langle\theta|\mX\rangle})$ be its Laplace transform, and assume that $\Theta_\mX\ne 0$. Suppose that $\E
(\mX)=\mO$, $\E (\mX^2)=\mI_n$ and that $\mX$ satisfies \eqref{Meixner-Anshelevich} with some $a$ and $b$.  Let $U_\mX$ be the associated open subset of $\RR^n$, see \eqref{U_X}.
  \begin{enumerate}
  \item[(i)] If $a\ne 0$, define $g:U_\mX\to\RR$ by
  $ g(\sigma_1(\theta),\ldots,\sigma_n(\theta))=  \exp(-4 a k(\theta)-b \tr\theta).$
    Then $g$  satisfies  the system of PDEs
$$
\mathbb{D}(g)=\left[\begin{matrix}
(b^2-4a)g\\ 0 \\ \vdots \\ 0
\end{matrix}
\right]
$$
Furthermore,
\begin{equation}
  \label{S I ini g}
  g(\sigma_1,\dots,\sigma_n)\to 1, \; \frac{\partial g(\sigma_1,\dots,\sigma_n)}{\partial \sigma_1}\to -b
\end{equation}
as $(\sigma_1,\dots,\sigma_n)\to 0$ over $U_\mX$. %

   \item[(ii)] If $a=0,b\ne 0$,   define $g:U_\mX\to\RR$ by
 $   g(\sigma_1(\theta),\ldots,\sigma_n(\theta))=k(\theta) +\frac{1}{2b} \tr\theta .$
 Then $g$  satisfies the system of PDEs
$$
\mathbb{D}(g)=2b\left[\begin{matrix}
g_1\\ g_2 \\ \vdots \\ g_n
\end{matrix}
\right]
$$

Furthermore,
\begin{equation}
  \label{S II ini g}
  g(\sigma_1,\dots,\sigma_n)\to0,\; \frac{\partial g(\sigma_1,\dots,\sigma_n)}{\partial \sigma_1}\to \frac{1}{2b}
\end{equation}
as $(\sigma_1,\dots,\sigma_n)\to 0$ over $U_\mX$.

  \item[(iii)] If $a=0,b=0$,  define $g:U_\mX\to\RR$ by
  $g(\sigma_1(\theta),\ldots,\sigma_n(\theta))=k(\theta)$. Then $g$ satisfies the system of PDEs
 $$
\mathbb{D}(g)=\left[\begin{matrix}
1\\ 0 \\ \vdots \\ 0
\end{matrix}
\right]
$$
Furthermore,
\begin{equation}
  \label{S III ini g}
  g(\sigma_1,\dots,\sigma_n)\to0,\; \frac{\partial g(\sigma_1,\dots,\sigma_n)}{\partial \sigma_1}\to 0
\end{equation}
as $(\sigma_1,\dots,\sigma_n)\to 0$ over $U_\mX$.
  \end{enumerate}
  Conversely, let $\mu$ be an ensemble on $\HH_{n,\beta}$   such that  $\Theta_\mu\ne\emptyset$. If   one can find constants $a,b$ that lead to one of the equations in (i), (ii) or (iii), then the corresponding version of equation \eqref{k-equation} holds for all $\theta\in\Theta_\mu$, and hence $\mu$ is a Meixner ensemble with parameters $a,b$.
\end{theorem}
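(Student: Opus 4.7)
The starting point is Corollary~\ref{C-P1.2}: relation~\eqref{Meixner-Anshelevich} is equivalent to $\Psi(k''(\theta))(\mI_n) = \mI_n + 2b\,k'(\theta) + 4a\,(k'(\theta))^2$ on $\Theta_\mX$. My first step is to normalize this equation via the substitution appropriate to each case. In case~(i), $g=\exp(-4ak-b\sigma_1)$ yields $g'/g = -4ak' - b\mI_n$ and $g''/g - (g'/g)\otimes(g'/g) = -4a\,k''$; a direct computation then gives
\begin{equation*}
\mI_n + 2bk' + 4a(k')^2 = \tfrac{1}{4a}\big((g'/g)^2 + (4a-b^2)\mI_n\big), \quad \Psi(k''(\theta))(\mI_n) = \tfrac{1}{4a}\big((g'/g)^2 - \Psi(g''(\theta))(\mI_n)/g\big),
\end{equation*}
and equating the two sides collapses~\eqref{k-equation} to $\Psi(g''(\theta))(\mI_n) = (b^2-4a)\,g(\theta)\,\mI_n$. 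Analogous algebra reduces case~(ii), where $g=k+\sigma_1/(2b)$ and $k''=g''$, to $\Psi(g''(\theta))(\mI_n) = 2b\,g'(\theta)$, and case~(iii), where $g=k$, to $\Psi(g''(\theta))(\mI_n) = \mI_n$. In each case the remaining task is to convert a single matrix identity in $\HH_{n,\beta}$ into a system of $n$ scalar PDEs in the variables $\sigma_1,\ldots,\sigma_n$.

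By rotation invariance, write $g(\theta) = G(\sigma_1(\theta),\ldots,\sigma_n(\theta))$ on the non-empty open set $D_n^0\cap\Theta_\mX$. The classical identity $\sigma_i'(\theta)=\sum_{j=0}^{i-1}(-1)^j\sigma_{i-1-j}(\theta)\,\theta^j$, obtained from the expansion of the adjugate of $\mI_n+z\theta$, together with the chain rule give $g'(\theta) = \sum_i G_i\,\sigma_i'(\theta)$ and
\begin{equation*}
g''(\theta) = \sum_{i,j}G_{ij}\,\sigma_i'(\theta)\otimes\sigma_j'(\theta) + \sum_i G_i\,\sigma_i''(\theta).
\end{equation*}
Because $\sigma_i'(\theta)$ and $\sigma_j'(\theta)$ are polynomials in $\theta$ they commute, so the polarization identity $\Psi(y\otimes z+z\otimes y)(\mI_n)=yz+zy$ yields $\Psi(\sigma_i'(\theta)\otimes\sigma_j'(\theta))(\mI_n)=\sigma_i'(\theta)\sigma_j'(\theta)$. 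The term $\sum_i G_i\,\Psi(\sigma_i''(\theta))(\mI_n)$ is where $\beta$ enters: the Hessian of a rotation-invariant polynomial decomposes into a part on $D_n$ (governed by derivatives with respect to eigenvalues) and a part on $D_n^\perp$ of multiplicity $\beta$ per eigenvalue pair. One evaluates $\Psi(\sigma_i''(\theta))(\mI_n)$ via the auxiliary identity $\Psi(\mathrm{id}_{\HH_{n,\beta}})(\mI_n)=\big(1+(n-1)\beta/2\big)\mI_n$, which is verified on any orthonormal basis of $\HH_{n,\beta}$ and produces, for instance, $\Psi(\sigma_2''(\theta))(\mI_n)=-(n-1)(\beta/2)\mI_n$.

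The key observation is that the appropriate expansion is not in the monomial basis $\{\mI_n,\theta,\ldots,\theta^{n-1}\}$ but in the basis $\{\sigma_1'(\theta),\sigma_2'(\theta),\ldots,\sigma_n'(\theta)\}$ of polynomials in $\theta$ of degrees $0,1,\ldots,n-1$; the coefficients of $\Psi(g''(\theta))(\mI_n)$ in that basis are exactly the entries of $\mathbb{D}(g)$. In case~(i) the right-hand side $(b^2-4a)g\,\mI_n=(b^2-4a)g\,\sigma_1'(\theta)$, in case~(ii) it is $2b\sum_j G_j\,\sigma_j'(\theta)$, and in case~(iii) it is $\sigma_1'(\theta)$; matching $\sigma_j'(\theta)$-coefficients in each case produces the three stated systems. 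The main technical obstacle is the combinatorial bookkeeping: verifying that the coefficient of $\sigma_j'(\theta)$ in $\sum_{r,s}G_{rs}\sigma_r'(\theta)\sigma_s'(\theta)$ equals $\sum_{r,s=1}^{j}\sigma_{r+s-1-j}G_{rs}-\sum_{r,s=j+1}^n\sigma_{r+s-1-j}G_{rs}$ (with $\sigma_l=0$ for $l<0$), which is proved by direct expansion of the product and reduction using the Cayley--Hamilton relation $\sum_{k=0}^n(-1)^k\sigma_k\,\theta^{n-k}=0$. The initial conditions~\eqref{S I ini g}--\eqref{S III ini g} follow by specializing to $\theta=\mO$: the normalizations $\E(\mX)=\mO$ and $\E(\mX^2)=\mI_n$ force $k'(\mO)=\mO$ and $\Psi(k''(\mO))(\mI_n)=\mI_n$, which translate through the defining relation for $g$ into the stated limits. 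For the converse, running the algebraic reductions backwards on $D_n^0\cap\Theta_\mu$ recovers~\eqref{k-equation} there, and rotation invariance together with the density of $D_n^0$ extend it to all of $\Theta_\mu$; Corollary~\ref{C-P1.2} then identifies $\mu$ as a Meixner ensemble with parameters $a,b$.
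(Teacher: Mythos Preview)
Your overall architecture matches the paper's exactly: reduce \eqref{k-equation} to a matrix identity for $f$ via the substitution appropriate to each case, write $f$ as a function of $(\sigma_1,\ldots,\sigma_n)$ by rotation invariance, expand $\Psi(f''(\theta))(\mI_n)$, and read off coefficients in the basis $\{\sigma_1'(\theta),\ldots,\sigma_n'(\theta)\}$. The reduction in case~(i) and the handling of the converse and initial conditions are fine.

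The genuine gap is in your treatment of $\sum_i G_i\,\Psi(\sigma_i''(\theta))(\mI_n)$. You verify only $m=2$ and then gesture at a Hessian decomposition along $D_n\oplus D_n^\perp$ together with the identity $\Psi(\mathrm{id})(\mI_n)=(1+(n-1)\beta/2)\mI_n$. That identity is correct and does settle $m=2$ (since $\sigma_2''=\mI_n\otimes\mI_n-\mathrm{id}$), but for $m\ge 3$ the Hessian $\sigma_m''(\theta)$ involves the derivative $[\theta^s]'(h)=\sum_{i=0}^{s-1}\theta^i h\theta^{s-1-i}$, and evaluating $\Psi$ on such operators is not reducible to $\Psi(\mathrm{id})$ alone. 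The paper treats this as the central technical step (its Proposition~\ref{P3.1}) and proves the closed form $\Psi(\sigma_m'')(\mI_n)=\frac{\beta}{2}(m-1-n)\,\sigma_{m-1}'$ by writing each $[\theta^s]'$ as a combination of the quadratic maps $\PP{\theta^i+\theta^{s-1-i}}-\PP{\theta^i}-\PP{\theta^{s-1-i}}$, applying the explicit formula \eqref{magie} for $\Psi(\PP{y})$, and then collapsing the resulting trace sums using Newton's identity \eqref{CH00}. Your decomposition idea is not obviously wrong, but it is a different route and you would need to carry it through for all $m$, not just $m=2$.

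You also misjudge where the weight lies: the product expansion of $\sigma_r'(\theta)\sigma_s'(\theta)$ in the $\sigma_j'$ basis, which you call the main technical obstacle, is in fact handled in the paper by a short one-line induction on the bracket identity $\sigma_{j+1}'\sigma_n'-\sigma_j'\sigma_{n+1}'=[\sigma_j,\sigma_n]$ (no Cayley--Hamilton needed beyond what is already encoded in $\sigma_{n+1}'=0$). The hard part is $\Psi(\sigma_m'')(\mI_n)$, not the product formula.
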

The left hand side $\mathbb{D}(g)$ of our systems of the PDE's  resembles the system of PDE's for the Bessel function of matrix argument given in  \cite[Eqtn (18)]{James:1955} when $\beta=1$, see also \cite[page 275 Eqtn (31)]{Muirhead:1982}.
 However, as we will see in Proposition \ref{P6.1}, in contrast to \cite{James:1955}  our system has multiple analytic solutions.

\subsection{Proof of Theorem \ref{T1}}

 The Cayley Hamilton theorem (for the case $\beta=4$, see \cite[Proposition II.2.1]{Faraut-Koranyi-94}) implies
\begin{equation}\label{CH}
\theta^n=\sum_{i=1}^n(-1)^{i-1}\sigma_i(\theta)\theta^{n-i}.\end{equation}
We use also without proof the Newton formula \cite[\S X.4]{Arnaudies:1987} %
\begin{equation}\label{CH0}m\sigma_m(\theta)=\sum_{i=0}^{m-1}(-1)^{m-1-i}\sigma_i(\theta)\tr(\theta^{m-i});\end{equation}
we remark that under the convention that $\sigma_m=0$ for $m>n$,
formula \eqref{CH0} holds true for all $m\geq 1$, {\em i.e.} also for $m>n$.
 Since $\tr \mI_n =n$, it can be rewritten  as
\begin{equation}\label{CH00}(m-n)\sigma_m(\theta)=\sum_{i=0}^{m}(-1)^{m-1-i}\sigma_i(\theta)\tr(\theta^{m-i}).\end{equation}

We also need  a differentiation formula.
\begin{proposition} Fix $n\geq 1$. Then for all $m\geq  1$,
\begin{equation}\label{CH1}\sigma'_m(\theta)=\sum_{i=0}^{m-1}(-1)^{m-1-i}\sigma_i(\theta)\theta^{m-1-i}.\end{equation}
\end{proposition}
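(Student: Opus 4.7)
The plan is to read off $\sigma'_m(\theta)$ from the coefficients of the matrix-valued adjugate $\mathrm{adj}(\mI_n+z\theta)=\det(\mI_n+z\theta)(\mI_n+z\theta)^{-1}$ expanded in powers of $z$. First I would differentiate the generating-function identity $\det(\mI_n+z\theta)=\sum_{m=0}^n\sigma_m(\theta)z^m$ in a Hermitian direction $h$; Jacobi's formula together with the relation $\det(M)M^{-1}=\mathrm{adj}(M)$ gives
\begin{equation*}
\sum_{m=0}^n\langle\sigma'_m(\theta),h\rangle\,z^m = z\,\tr\bigl(\mathrm{adj}(\mI_n+z\theta)\,h\bigr).
\end{equation*}
Since the pairing $\langle\,\cdot\,,\,\cdot\,\rangle=\Re\tr(\,\cdot\,\cdot)$ is non-degenerate on $\HH_{n,\beta}$, this identifies $\sigma'_m(\theta)$ as the coefficient of $z^{m-1}$ in the matrix polynomial expansion $\mathrm{adj}(\mI_n+z\theta)=\sum_{j\ge 0}A_j(\theta)\,z^j$.

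Next I would compute those coefficients from the defining identity $(\mI_n+z\theta)\,\mathrm{adj}(\mI_n+z\theta)=\det(\mI_n+z\theta)\mI_n$. Matching the coefficient of $z^j$ on both sides yields the two-term recursion
\begin{equation*}
A_0=\mI_n,\qquad A_j+\theta A_{j-1}=\sigma_j(\theta)\mI_n\quad(j\ge 1).
\end{equation*}
A short induction then gives $A_j=\sum_{i=0}^{j}(-1)^{j-i}\sigma_i(\theta)\theta^{j-i}$, and setting $m=j+1$ in $\sigma'_m(\theta)=A_{m-1}$ produces \eqref{CH1} exactly.

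The only subtle point is the quaternionic case $\beta=4$, where the determinant (and hence the adjugate) is defined only on Hermitian matrices, so one cannot literally perturb $\theta$ in an arbitrary direction. The remedy I would use is that both sides of \eqref{CH1} are polynomial and conjugation-equivariant in $\theta$ (the left side because $\sigma_m(U\theta U^*)=\sigma_m(\theta)$ and the inner product is $\mathcal{K}_{n,\beta}$-invariant), so it suffices to check the identity on the dense set $D_n^0$ of matrices with distinct real eigenvalues. For $\theta=\mathrm{diag}(\theta_1,\dots,\theta_n)\in D_n^0$ both sides are diagonal, and the $k$-th diagonal entry of \eqref{CH1} reduces to the classical scalar identity
\begin{equation*}
\frac{\partial e_m}{\partial\theta_k}(\theta_1,\dots,\theta_n)=\sum_{i=0}^{m-1}(-1)^{m-1-i}e_i(\theta_1,\dots,\theta_n)\,\theta_k^{m-1-i},
\end{equation*}
which follows at once from the factorization $\prod_j(1+z\theta_j)=\sum_m e_m(\theta_1,\dots,\theta_n)z^m$ by differentiating in $\theta_k$ and reading off the coefficient of $z^{m-1}$.
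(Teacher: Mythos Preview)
Your argument is correct and takes a genuinely different route from the paper's. The paper proceeds by induction on $m$: it differentiates Newton's identity $m\sigma_m=\sum_{i=0}^{m-1}(-1)^{m-1-i}\sigma_i\tr(\theta^{m-i})$, applies the induction hypothesis to the $\sigma_i'$ that appear, and shows that the resulting remainder term vanishes by another application of Newton's identity. This is entirely elementary and, importantly, works uniformly for $\beta=1,2,4$ without any case distinction, since Newton's identity and the relation $(\tr\theta^s)'=s\theta^{s-1}$ hold verbatim in all three cases.

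Your approach instead recognizes the right-hand side of \eqref{CH1} as the Faddeev--LeVerrier coefficients of $\mathrm{adj}(\mI_n+z\theta)$, and uses Jacobi's formula to tie these to $\sigma_m'$. This is more conceptual---it explains \emph{why} that particular alternating sum appears---and the two-term recursion $A_j+\theta A_{j-1}=\sigma_j\mI_n$ makes the induction trivial. The price is that Jacobi's formula and the adjugate are not standard for quaternionic Hermitian matrices, so you need the separate diagonalization argument for $\beta=4$. That argument is fine (equivariance forces $\sigma_m'(\theta)$ to be diagonal when $\theta$ is, and then the scalar identity is immediate), and in fact it already proves the result for all $\beta$ on its own; one could present it as the main proof rather than a fallback. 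One small wording point: $D_n^0$ itself is not dense in $\HH_{n,\beta}$---you are really using equivariance to pass to the $\mathcal{K}_{n,\beta}$-orbit of $D_n^0$, which is the dense set of matrices with distinct eigenvalues, and then polynomiality to close up.
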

\begin{proof} We prove \eqref{CH1} by induction with respect to $m$. Since
$\sigma'_1=\mI_n=\sigma_0\theta^0$, the formula holds true for $m=1$.
Suppose \eqref{CH1} holds for some $m\geq 1$. With
$t_j=\tr(\theta^j)$, we differentiate Newton's formula \eqref{CH0}
written for $\sigma_{m+1}$ and use the induction assumption. We get
\begin{multline}\label{Ind-Step}
(m+1)\sigma'_{m+1}=(-1)^m(m+1)\theta^m\\
+\sum_{j=1}^m(-1)^{m-j}
\Big((m+1-j)\sigma_j\theta^{m-j}
+t_{m+1-j}\sum_{s=0}^{j-1}(-1)^{j-1-s}\sigma_s\theta^{j-1-s}\Big)
\\
=(-1)^m(m+1)\theta^m+(m+1)\sum_{j=1}^m(-1)^{m-j}\sigma_j\theta^{m-j}+R\,,
\end{multline}
where
\begin{multline*}
  R=-\sum_{j=1}^m(-1)^{m-j}j\sigma_j\theta^{m-j}
+\sum_{j=1}^m(-1)^{m-j}t_{m+1-j}\sum_{s=0}^{j-1}(-1)^{j-1-s}\sigma_s\theta^{j-1-s}\\
=-\sum_{j=1}^m(-1)^{m-j}j\sigma_j\theta^{m-j}
+\sum_{s=1}^m(-1)^{m-s}\theta^{m-s}\sum_{j=0}^{s-1}(-1)^{s-1-j}\sigma_jt_{s-j}\,.
\end{multline*}
Using Newton's formula again, we see that
$\sum_{j=0}^{s-1}(-1)^{s-1-j}\sigma_jt_{s-j}=s\sigma_s$, so $R=0$
and the formula follows from \eqref{Ind-Step}.
\end{proof}

We now  consider separately the following cases, in which we re-write \eqref{k-equation}  by the indicated substitutions:
\begin{enumerate}
  \item[(i)] If $a\ne 0$, then
  $ f(\theta)= \exp(-4 a k(\theta)-b \tr(\theta)) $ solves
  \begin{equation}
    \label{System I}
    \Psi(f''(\theta))(\mI_n)=(b^2-4a) f(\theta)\mI_n\,.
  \end{equation}
  The  limits as $\theta\to \mO$  in $\Theta_\mX$ are:
 \begin{equation}
    \label{System I ini}
f(\theta)\to 1, \; f'(\theta)\to -b \mI_n\,.
  \end{equation}

   \item[(ii)] If $a=0,b\ne 0$, then
  $ f(\theta)=   k(\theta)+\frac{1}{2b} \tr(\theta) $ solves
  \begin{equation}
    \label{System II}
    \Psi(f''(\theta))(\mI_n)=2b f'(\theta)\mI_n\,.
  \end{equation}
  The  limits as $\theta\to \mO$  in $\Theta_\mX$ are:  \begin{equation}
    \label{System II ini}
    f(\theta)\to 0, \; f'(\theta)\to \frac{1}{2b}\mI_n\,.
  \end{equation}

  \item[(iii)] If $a=0,b=0$, then $f(\theta)=k(\theta)$ solves
  \begin{equation}
    \label{System III}
    \Psi(f''(\theta))(\mI_n)=\mI_n\,.
  \end{equation}
  The  limits as $\theta\to \mO$  in $\Theta_\mX$ are: \begin{equation}
    \label{System III ini}
f(\theta)\to 0, \;f'(\theta)\to\mO.
  \end{equation}
  \end{enumerate}
The remaining part of the proof consists of finding a suitable expression for $f$.

Now the fact that the law of $\mX$ is invariant under the rotations is equivalent to $k$ being  invariant under the substitution  $\theta\mapsto U\theta U^*$
for all $U\in\mathcal{K}_{n,\beta}$, and is equivalent to saying that in each of the three cases mentioned above,  there
exists a real analytic function $g$ on $U_\mX$   such that
$$f(\theta)=g(\sigma_1(\theta),\ldots,\sigma_n(\theta))$$
for all $\theta\in\Theta_\mX$ with distinct eigenvalues.
(This is Weyl's formula, see  \cite[Lemma 2.6.1]{Mehta91}.)
 Using \eqref{CH1} we get
\begin{equation}\label{f-g-ini}
f'(\theta)=\sum_{m=1}^n g_m\sigma'_m(\theta)
=\sum_{m=1}^ng_m
\sum_{i=0}^{m-1}(-1)^{m-1-i}\sigma_i(\theta)\theta^{m-1-i}.
\end{equation}

The second derivative of $f$ is trickier: we write $f''=A+B$ with
\begin{eqnarray*}A&=&\sum_{m=1}^n\sum_{i=1}^ng_{mi}\sigma'_m(\theta)\otimes\sigma'_i(\theta),\\
B&=&\sum_{m=1}^ng_m\sigma''_m(\theta).\end{eqnarray*} The calculation
of $\Psi(A)(\mI_n)$ is quick and gives
$$\Psi(A)(\mI_n)=\sum_{m=1}^n\sum_{i=1}^ng_{mi}\sigma'_m\sigma'_i\,.$$ For calculating
$\Psi(B)(\mI_n)$ we prove the surprising formula:

\begin{proposition}
  \label{P3.1}  $\Psi(\sigma''_m)(\mI_n)=\frac{\beta}{2}(m-1-n) \sigma'_{m-1}.$
\end{proposition}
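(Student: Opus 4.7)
The plan is to encode everything into a single generating function in an auxiliary variable $z$ and to apply $\Psi$ at the operator level before extracting coefficients. Set $D(z)=\det(\mI_n+z\theta)=\sum_{j=0}^n\sigma_j(\theta)z^j$ and $A(z)=(\mI_n+z\theta)^{-1}$. Formula \eqref{CH1} can be repackaged as the adjugate identity
\[
\sum_{m\geq 1}\sigma'_m(\theta)\,z^{m-1}\;=\;D(z)A(z),
\]
since expanding $(\mI_n+z\theta)^{-1}$ as a Neumann series and multiplying by $D(z)$ produces exactly the sum $\sum_{i,k}(-1)^k\sigma_i\theta^k z^{i+k}$ that appears in \eqref{CH1}.

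Differentiating once more in $\theta$ along a direction $h$, using the standard formulas $\tfrac{d}{dh}D=zD\tr(Ah)$ and $\tfrac{d}{dh}A=-zAhA$, I obtain the operator-valued identity
\[
\sum_{m\geq 1}\sigma''_m(\theta)\,z^{m-1}\;=\;zD(z)\bigl(A(z)\otimes A(z)-\PP{A(z)}\bigr)
\]
in $L_s(\HH_{n,\beta})$. Symmetry of the right-hand side is clear because $A$ is self-adjoint and all powers of $\theta$ commute, so $\Psi$ applies. The key algebraic cancellation now comes from \eqref{magie}: since $\Psi(A\otimes A)=\PP{A}$ and $\Psi(\PP{A})=\tfrac{\beta}{2}A\otimes A+(1-\tfrac{\beta}{2})\PP{A}$, the two contributions almost annihilate and leave
\[
\Psi\bigl(A\otimes A-\PP{A}\bigr)\;=\;\tfrac{\beta}{2}\bigl(\PP{A}-A\otimes A\bigr).
\]
Evaluating at $\mI_n$ yields $\tfrac{\beta}{2}(A^2-\tr(A)\,A)$, and therefore
\[
\sum_{m\geq 1}\Psi(\sigma''_m(\theta))(\mI_n)\,z^{m-1}\;=\;\tfrac{\beta}{2}\,zD(z)\bigl(A(z)^2-\tr(A(z))\,A(z)\bigr).
\]

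The final step is to rewrite this right-hand side as a generating series in $\sigma'_{m-1}$. Jacobi's formula combined with the trivial identity $z\theta A=\mI_n-A$ gives $zD'=D(n-\tr A)$, and differentiating $A=(\mI_n+z\theta)^{-1}$ yields $A'=(A^2-A)/z$. Multiplying out $(DA)'=D'A+DA'$ and clearing denominators produces
\[
z(DA)'=DA^2+(n-1-\tr A)DA,
\]
so that $zD(A^2-\tr(A)A)=z^2(DA)'-(n-1)\,zDA$. Since $zDA=\sum_{m\geq 1}\sigma'_m z^m$, the operator $z\tfrac{d}{dz}$ applied to this series gives $\sum_{m\geq 1}(m-n)\sigma'_m z^m=\sum_{m\geq 2}(m-1-n)\sigma'_{m-1}z^{m-1}$, which matches the desired right-hand side. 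Comparing coefficients of $z^{m-1}$ yields the claim; note that for $m=1$ both sides vanish, consistent with $\sigma'_0=0$.

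The main delicate point is the operator-algebra step: one has to check that $A\otimes A-\PP{A}$ really is a symmetric endomorphism (so that $\Psi$ applies to it), and that the formula \eqref{magie} propagates cleanly through the generating-function formalism. Once that is in place, everything else reduces to routine manipulation of the resolvent $(\mI_n+z\theta)^{-1}$.
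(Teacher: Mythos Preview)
Your argument is correct and takes a genuinely different route from the paper. The paper differentiates the explicit formula \eqref{CH1} for $\sigma'_m$ term by term, splits $\sigma''_m=C_m+D_m$ into the part coming from differentiating the coefficients $\sigma_i$ and the part coming from differentiating the powers $\theta^{m-1-i}$, applies Proposition~\ref{P1.1} separately to each piece, and then collapses the resulting double sums using Newton's identity \eqref{CH00}. Your approach instead packages all $m$ at once into the resolvent generating function $D(z)A(z)$, differentiates to get $\sigma''_m$ as the coefficient of $z^{m-1}$ in $zD(A\otimes A-\PP{A})$, and then exploits the single algebraic fact that $A\otimes A-\PP{A}$ lies in the $(-\beta/2)$--eigenspace of $\Psi$. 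This replaces all of the index bookkeeping by two resolvent identities ($zD'=D(n-\tr A)$ and $zA'=A^2-A$), and is considerably shorter. What the paper's computation buys, by contrast, is that it never leaves the polynomial world and avoids any convergence or formal-series issues with $(\mI_n+z\theta)^{-1}$; your version is cleanest if read as an identity of formal power series in $z$.

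One small slip in your write-up: the sentence ``the operator $z\tfrac{d}{dz}$ applied to this series gives $\sum_{m\geq 1}(m-n)\sigma'_m z^m$'' is not literally right, since $z\tfrac{d}{dz}$ produces the factor $m$, not $m-n$. What you actually need (and what your preceding line $z^2(DA)'-(n-1)zDA$ already gives) is the combination $z\tfrac{d}{dz}-n$ acting on $zDA$, or equivalently the direct expansion $z^2(DA)'=\sum_{m\geq 1}(m-1)\sigma'_m z^m$ together with $(n-1)zDA=\sum_{m\geq 1}(n-1)\sigma'_m z^m$. The conclusion is unaffected.
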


\begin{proof}
We split $\sigma''_m(\theta)$ into $\sigma''_m(\theta)=C_m+D_m$
where
\begin{eqnarray}C_m&=&
\sum_{i=0}^{m-1}(-1)^{m-1-i}\sum_{j=0}^{i-1}(-1)^{i-1-j}\sigma_j\theta^{i-1-j}\otimes\theta^{m-1-i}\nonumber
\\&=&\sum_{j=0}^{m-2}(-1)^{m-j}\sigma_j\sum_{s=0}^{m-2-j}\theta^{s}\otimes\theta^{m-2-j-s},\label {CH2}
\\
D_m&=&\sum_{j=0}^{m-1}(-1)^{m-1-j}\sigma_j[\theta^{m-1-j}]'.\label
{CH3}\end{eqnarray}
Since $z\otimes y+y\otimes x=(x+y)\otimes(x+y)-x\otimes x-y\otimes y$, pairing up the appropriate powers from
 Proposition \ref{P1.1} we get
 \begin{equation}\label {CH4}\Psi\left(\sum_{s=0}^{m-2-j}\theta^{s}\otimes\theta^{m-2-j-s}\right)(\mI_n)=
 (m-j-1)\theta^{m-j-2},\end{equation}
which  leads to the calculation of $\Psi(C_m)(\mI_n):$

\begin{equation}\label {CH5}
\Psi(C_m)(\mI_n)=\sum_{j=0}^{m-2}(-1)^{m-j}\sigma_j(m-j-1)\theta^{m-j-2}.
\end{equation}
Having in mind the calculation of  $\Psi(D_m)(\mI_n)$ we write
$$[\theta^{s}]'(h)=\sum_{i=0}^{s-1}\theta^i h\theta^{s-1-i}=%
\frac{1}{2}\sum_{i=0}^{s-1}
\left[\PP{\theta^i+\theta^{s-1-i}}-\PP{\theta^{i}}-\PP{\theta^{s-1-i}}\right](h).$$
From Proposition \ref{P1.1}, we have
$\Psi(\PP{x})=\left(1-\frac{\beta}{2}\right)\PP{x}+\frac{\beta}{2}x\otimes
x$ which leads to
$\Psi(\PP{x})(\mI_n)=(1-\frac{\beta}{2})x^2+\frac{\beta}{2}x
\tr(x)$ and to
$$\Psi(\PP{x+y}-\PP{x}-\PP{y})(\mI_n)=\left(1-\frac{\beta}{2}\right)(xy+yx)+\beta\frac{x \tr( y)+y \tr (x)}{2}.$$
Thus we can compute
\begin{eqnarray*}\Psi([\theta^{s}]')(\mI_n)&=&\frac{1}{2}\left(1-\frac{\beta}{2}\right)
\sum_{i=0}^{s-1}[\theta^{i}\theta^{s-i-1}+\theta^{s-i-1}\theta^{i}]+\frac{\beta}{4}
\sum_{i=0}^{s-1}[\theta^i \tr(\theta^{s-1-i}) + \theta^{s-1-i}\tr
(\theta^i)]\\&=&\left(1-\frac{\beta}{2}\right)s\theta^{s-1}+\frac{\beta}{2}
\sum_{i=0}^{s-1}\theta^i \tr(\theta^{s-1-i}).\end{eqnarray*}
We
apply this to $s=m-1-j$ and compute $\Psi(D_m)(\mI_n):$
\begin{multline}\label {CH6} \Psi(D_m)(\mI_n)=
\left(1-\frac{\beta}{2}\right)\sum_{j=0}^{m-2}(-1)^{m-1-j}\sigma_j(m-1-j)
\theta^{m-2-j}\\+\frac{\beta}{2}\sum_{j=0}^{m-2}(-1)^{m-1-j}\sigma_j
 \sum_{i=0}^{m-2-j}\theta^i \tr(\theta^{m-2-j-i}).
\end{multline} Note that we replaced $m-1$ by $m-2$ in the first
sum of the right hand side.  We obtain
\begin{eqnarray}\nonumber \Psi(\sigma''_m)(\mI_n)=&& \frac{\beta}{2}\sum_{j=0}^{m-2}(-1)^{m-j}\sigma_j(m-1-j)\theta^{m-2-j}\\&&\nonumber
+
\frac{\beta}{2}\sum_{i=0}^{m-2}(-1)^{i}\theta^i\sum_{j=0}^{m-2-i}(-1)^{m-1-j-i}\sigma_j\tr(\theta^{m-2-i-j})
\\ \label {CH8}=&&\frac{\beta}{2}\sum_{j=0}^{m-2}(-1)^{m-j}\sigma_j(m-1-j)\theta^{m-2-j}
\nonumber
\\&&
+\frac{\beta}{2}\sum_{i=0}^{m-2}(-1)^{i}\theta^i(m-2-i-n)\sigma_{m-2-i}
 \\\label {CH9}
=&&
\frac{\beta}{2}\sum_{j=0}^{m-2}(-1)^{m-j}\sigma_j(m-1-j)\theta^{m-2-j} \\ \nonumber &&
+
\frac{\beta}{2}\sum_{j=0}^{m-2}(-1)^{m-j}\theta^{m-2-j}(j-n)\sigma_{j}
\\ \nonumber
=&&\beta\frac{m-1-n}{2}\sum_{j=0}^{m-2}(-1)^{m-j}\sigma_j\theta^{m-2-j}\\ \label {CH10}
=&&\beta\frac{m-1-n}{2}\sigma'_{m-1}\;.
\end{eqnarray}
In this chain of equalities, the first is obtained by  combining
\eqref{CH5} and \eqref{CH6} and reversing the summation in $i$ and
$j$ in the last sum;  \eqref{CH8} is  obtained by applying
\eqref{CH00} (replacing $m$ by $m-2-i$); \eqref{CH9} is  obtained by
changing the variable $j=m-2-i.$ The last equality  \eqref{CH10}
uses the equality \eqref{CH1}. Thus the Proposition \ref{P3.1} is proved.

\end{proof}

Gathering all terms, using the
change  of variable $m=j-1$ in $\Psi(B)$ and the fact that
$\sigma'_0=0$, the expression $\Psi(f'')(\mI_n)=\Psi(A+B)(\mI_n)$ becomes,
$$\Psi(f'')(\mI_n)=\sum_{m=1}^n\sum_{i=1}^ng_{mi}\sigma'_m\sigma'_i
+\frac{\beta}{2}\sum_{j=1}^{n -1}(j-n)g_{j+1}\sigma'_{j}\,.$$
Since $\sigma_1'(\theta)=\mI_n$,  the three equations \eqref{System I},\eqref{System II} and \eqref{System III} respectively become
\begin{equation}\label{I}
  \sum_{m=1}^n\sum_{i=1}^ng_{mi}\sigma'_m\sigma'_i
+\frac{\beta}{2}\sum_{j=1}^{n -1}(j-n)g_{j+1}\sigma'_{j}=(b^2-4a) g \sigma_1'(\theta),
\end{equation}
\begin{equation}\label{II}
  \sum_{m=1}^n\sum_{i=1}^ng_{mi}\sigma'_m\sigma'_i
+\frac{\beta}{2}\sum_{j=1}^{n -1}(j-n)g_{j+1}\sigma'_{j}=2b\sum_{m=1}^n g_m\sigma'_m(\theta),
\end{equation}
\begin{equation}\label{III}
  \sum_{m=1}^n\sum_{i=1}^ng_{mi}\sigma'_m\sigma'_i
+\frac{\beta}{2}\sum_{j=1}^{n -1}(j-n)g_{j+1}\sigma'_{j}=\sigma_1'(\theta).
\end{equation}

Noting that
$\sigma'_1,\sigma'_2,\ldots,\sigma'_n $ is a basis of the algebra
generated by $\theta$ with distinct eigenvalues, the corresponding systems of PDEs arise by comparing the  coefficients
at $\sigma_j'$ in (\ref{I}-\ref{III}), respectively. To end the proof, we only need to find
 the coefficients $P_j(m,i)$
in the basis expansion
\begin{equation}\label{P-expansion}
\sigma'_m\sigma'_i=\sum_{j=1}^nP_j(m,i)\sigma'_j\,.
\end{equation}
This is accomplished by the following formula.
\begin{proposition}
For all $ r,s\geq 1$,
\begin{equation}\label{[*]}
\sigma'_r(\theta)\sigma'_s(\theta)=\sum_{j=0}^{r-1}[\sigma_j,\sigma_{r+s-1-j}],
\end{equation}
where
$$[f,g]:=f(\theta)g'(\theta)-f'(\theta)g(\theta).$$
\end{proposition}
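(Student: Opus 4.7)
My plan is to establish the three–term recursion
$$\sigma'_r(\theta)\sigma'_s(\theta) - \sigma'_{r-1}(\theta)\sigma'_{s+1}(\theta) = [\sigma_{r-1},\sigma_s]$$
for all $r\ge 1$, $s\ge 0$, and then to telescope it.

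The first step is an auxiliary shift identity
$$\sigma'_{m+1}(\theta) = -\theta\,\sigma'_m(\theta) + \sigma_m(\theta)\,\mI_n,$$
which follows immediately from \eqref{CH1} by separating out the $i=m$ summand (which equals $\sigma_m\mI_n$) and factoring $\theta$ from the remaining ones.

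The second step is the recursion itself. Expand $\sigma'_r\sigma'_s - \sigma'_{r-1}\sigma'_{s+1}$ using \eqref{CH1} on the $\sigma'_r$ and $\sigma'_{r-1}$ factors, and use the shift identity to replace $\sigma'_{s+1}$ by $-\theta\sigma'_s+\sigma_s\mI_n$. The two resulting sums multiplying $\sigma'_s$ have the same indexing but opposite signs and differ only by a shift of $\theta^{r-1-i}$ versus $\theta^{r-2-i}\cdot\theta$, so they cancel term by term except for the boundary contribution $\sigma_{r-1}\sigma'_s$ from $i=r-1$ in the first sum. The sum multiplying $\sigma_s$ is, by \eqref{CH1}, exactly $\sigma'_{r-1}\sigma_s$. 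The difference is therefore $\sigma_{r-1}\sigma'_s - \sigma'_{r-1}\sigma_s = [\sigma_{r-1},\sigma_s]$.

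The third step is telescoping. Applying the recursion successively to the pairs $(r,s),(r-1,s+1),\ldots,(1,s+r-1)$ gives
$$\sigma'_r(\theta)\sigma'_s(\theta) = \sigma'_0(\theta)\sigma'_{r+s}(\theta) + \sum_{i=1}^{r}[\sigma_{r-i},\sigma_{s+i-1}].$$
Since $\sigma'_0=0$, reindexing by $j=r-i$ turns the sum into $\sum_{j=0}^{r-1}[\sigma_j,\sigma_{r+s-1-j}]$, which is the claim.

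The main obstacle is the careful index bookkeeping in the cancellation of the second step, but the alternating signs in \eqref{CH1} make the collapse automatic. An equivalent and perhaps more conceptual route would be to pass to the generating function $S(u)=\sum_{m\ge 1}\sigma'_m(\theta)u^m = uP(u)(\mI_n+u\theta)^{-1}$ with $P(u)=\det(\mI_n+u\theta)$: the identity $(v-u)S(u)S(v)=uv\bigl(P(u)S(v)-S(u)P(v)\bigr)$ reduces at once to the algebraic triviality $(v-u)\mI_n = v(\mI_n+u\theta)-u(\mI_n+v\theta)$, and matching coefficients of $u^r v^s$ yields precisely the recursion above, from which the telescoping proceeds as before.
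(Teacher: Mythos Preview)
Your proof is correct and follows essentially the same approach as the paper: both derive the shift identity $\sigma'_{m+1}=\sigma_m\mI_n-\theta\sigma'_m$, both obtain the same three-term recursion (the paper's \eqref{ind-prod}), and both use it to reach the claim---the paper by induction on $r$, you by telescoping, which amounts to the same thing. One small simplification you may want to adopt: in your Step~2 there is no need to expand $\sigma'_r$ and $\sigma'_{r-1}$ via \eqref{CH1}; applying the shift identity to \emph{both} $\sigma'_r$ and $\sigma'_{s+1}$ and using commutativity of the algebra generated by $\theta$ gives
\[
\sigma'_r\sigma'_s-\sigma'_{r-1}\sigma'_{s+1}
=(\sigma_{r-1}-\theta\sigma'_{r-1})\sigma'_s-\sigma'_{r-1}(\sigma_s-\theta\sigma'_s)
=\sigma_{r-1}\sigma'_s-\sigma'_{r-1}\sigma_s
\]
in one line, which is exactly how the paper does it.
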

\begin{proof}
From \eqref{CH1} we get
$\sigma'_{j+1}=\sigma_j\theta^0-\theta\sigma_j'$. Since the algebra
generated by $\theta$ is commutative,
$\sigma'_{j+1}\sigma'_n-\sigma'_j\sigma'_{n+1}=(\sigma_j\theta^0-\theta\sigma_j')\sigma'_n-
\sigma'_j(\sigma_n\theta^0-\theta\sigma_n')$ simplifies to
\begin{equation}
  \label{ind-prod}
  \sigma'_{j+1}\sigma'_n-\sigma'_j\sigma'_{n+1}=[\sigma_j,\sigma_n].
\end{equation}
We now prove the proposition by induction. We assume that
\eqref{[*]} holds for some fixed $r$ and all $s\geq 1$. This is
trivially true for $r=1$ since
$\sigma_1'\sigma_s'=\sigma_s'=[\sigma_0,\sigma_s]$. If \eqref{[*]}
holds true for some $r\geq 1$ and all $s\geq 1$, then by
\eqref{ind-prod} and the induction assumption,
$$
  \sigma_{r+1}'\sigma_s'=\sigma_{r}'\sigma'_{s+1}+[\sigma_r,\sigma_s]=
\sum_{j=0}^{r-1}[\sigma_j,\sigma_{r+s-j}]+[\sigma_r,\sigma_s],
$$
which shows that \eqref{[*]} holds for $r+1$ and all $s\geq 1$.
\end{proof}
\begin{proof}[Proof of Theorem \ref{T1}]
Cases (i)-(iii) correspond to equations (\ref{System I}-\ref{System III}).
The left hand sides of the resulting three equations (\ref{I}-\ref{III})  are then re-written using \eqref{P-expansion}.
From \eqref{[*]} we see that coefficients $P_j(m,i)$ are linear with
respect to $\sigma_1,\sigma_2,\ldots,\sigma_n$. Rewriting
\eqref{[*]} as
$$\sum_j
P_j(r,s)\sigma_j'=
\sum_{j=s}^{r+s-1}\sigma_{r+s-1-j}\sigma'_j-\sum_{j=1}^{r-1}\sigma_{r+s-1-j}\sigma'_j,$$
we get the
 explicit formula
\begin{equation}
  \label{Pmij}
P_j(r,s)=\begin{cases} \sigma_{r+s-1-j} & \mbox{ if } \max\{r, s \}\leq j\leq r+s-1\leq n ,\\
 -\sigma_{r+s-1-j} & \mbox{ if }1\leq j < \min\{r, s\} \mbox{ and } j\leq r+s-1\leq n,\\
 0& \mbox{otherwise}.
\end{cases}
\end{equation}
With this notation equations (\ref{I}-\ref{III}) become the equations asserted in Theorem \ref{T1}.
To derive the corresponding boundary limits, we use \eqref{f-g-ini} and the respective limits \eqref{System I ini}, \eqref{System II ini}, or \eqref{System III ini}.

It is also clear that these calculations can be reversed.
From the equations in listed in (i)--(iii), we see that \eqref{k-equation}  holds for all $\theta\in\Theta_\mX$ with distinct eigenvalues. By uniqueness of analytic extension,  \eqref{k-equation} extends to the whole open set $\Theta_\mX$.
\end{proof}
\begin{remark}
Note that to have a solution of the system in each of the three cases is not a guarantee of existence of an ensemble: the function $L(\theta)$ still have to be a Laplace transform of a probability measure. For example,  for $n=2$ there are solutions of the system for all $a<0$, but according to Theorem \ref{T.U}(i) they are the Laplace transforms of a matrix ensemble only if $-1/(4a)\in\NN$.
\end{remark}

\begin{remark}
Particular Meixner ensemble can be found from the system of PDEs by restricting $g$ to be a function of $\sigma_1,\sigma_2$ only. In other worlds, we look for an ensemble such that the Laplace  transform  $L(\theta)$ is a function  of $\tr \theta, \tr\theta^2$. Such strategy leads us to the gamma ensemble on $\HH_{n,\beta}$ mentioned in Remark \ref{R:gamma-n} and to  Definition \ref {Def-Gauss}  of the Gaussian ensemble. This strategy leads only to trivial ensembles for $b^2\ne 4a$.
 \end{remark}

\section{Meixner ensembles on $2\times2$ matrices}\label{Morris-Meixner}

The goal of this section is to use Theorem \ref{T1} to prove that the constructions from Section \ref{Sect:Meixner_Ensemble} exhaust all Meixner ensembles on $\HH_{2,\beta}$. %
\begin{theorem}\label{T.U}
Up to affine transformations, the only Meixner ensembles on $\HH_{2,\beta}$ with the Laplace transform defined on a non-empty open subset of $\HH_{2,\beta}$ are of the six types, depending on the values of $a,b$ from \eqref{Meixner-Anshelevich}:
\begin{enumerate}
\item If $\mu$ is a Meixner ensemble on $\HH_{2,\beta}$ with $a<0$ and $\Theta_\mu\ne\emptyset$, then $-1/(4a)=N\in\NN$ and   $\mu$ is a binomial ensemble $Bin(N, q_1, q_2)$ from Definition \ref{Def-Bin} for some $q_1,q_2$.
\item  If $\mu$ is a Meixner ensemble on $\HH_{2,\beta}$  with $a=0$, $b>0$ and $\Theta_\mu\ne\emptyset$, then $\mu$ is a Poisson ensemble from Definition \ref{Def-Poiss} for some $\la>0$.
\item If $\mu$ is a Meixner ensemble on $\HH_{2,\beta}$ with  $b^2>4a>0$ and $\Theta_\mu\ne\emptyset$, then $\mu$ is one of the negative binomial ensembles from Definition \ref{Def-NB}.
\item  If $\mu$ is a Meixner ensemble on $\HH_{2,\beta}$  with    $a=b=0$ and $\Theta_\mu\ne\emptyset$, then $\mu$  is one of the Gaussian ensembles from Definition \ref{Def-Gauss}.
\item If $\mu$ is a Meixner ensemble on $\HH_{2,\beta}$  with  $b^2=4a>0$ and $\Theta_\mu\ne\emptyset$, then $\mu$ is one of the gamma ensembles from Definition \ref{Def-Gamma}.
\item %
If $\mu$ is a Meixner ensemble on $\HH_{2,\beta}$  with $b^2<4a$ and $\Theta_\mu\ne\emptyset$, then either $\mu$  is the trivial ensemble of the form $\xi \mI$ or $b=0$ and $\mu$ is  the exceptional hyperbolic Meixner ensemble from Definition  \ref{P3.11}. %
\end{enumerate}

\end{theorem}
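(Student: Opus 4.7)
The strategy is to apply Theorem \ref{T1} with $n=2$ and solve the resulting PDE system explicitly in each of the six $(a,b)$-regimes, then match the unique analytic solution to a Laplace transform of an ensemble from Section \ref{Sect:Meixner_Ensemble}. For $n=2$ the operator $\mathbb{D}$ reduces to
$$\mathbb{D}(g)=\begin{bmatrix} g_{11}-\sigma_2 g_{22}-\tfrac{\beta}{2}g_2\\ 2g_{12}+\sigma_1 g_{22}\end{bmatrix},$$
so the system consists of two explicit second-order PDEs in two variables. Expanding $g=\sum c_{jk}\sigma_1^j\sigma_2^k$, a coefficient-by-coefficient check shows that the second PDE determines every $c_{jk}$ with $k\ge 1$ in terms of coefficients with smaller $k$, while the first PDE together with the prescribed values of $g(0,0)$ and $g_1(0,0)$ from \eqref{S I ini g}, \eqref{S II ini g}, or \eqref{S III ini g} pins down the remaining $c_{j0}$'s recursively, so the analytic solution is unique. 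The classification therefore reduces to exhibiting one solution per case.

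For cases (ii)--(v) the candidate $g$ attached to the ensemble from Section \ref{Sect:Meixner_Ensemble} is either a low-degree polynomial (Poisson, Gaussian) or a real power of an expression of the form $1+\alpha_1\sigma_1+\alpha_2\sigma_1^2+\alpha_3\sigma_2$ (binomial, negative binomial, gamma); direct substitution into the system fixes the coefficients $\alpha_i$ in terms of $(a,b,\beta)$ and verifies both PDEs. For the hyperbolic case (vi) the key move is the change of variables $\tau=\sigma_1^2-4\sigma_2$: under it the second PDE separates, the first becomes a harmonic-oscillator equation in $\sigma_1$ together with the Bessel equation for $\calJ_{(\beta-1)/2}(\sqrt{\tau})$, and one finds the ansatz
$$g=\bigl((1-\lambda)(\cos\sigma_1-\rho\sin\sigma_1)+\lambda\calJ_{(\beta-1)/2}(\sqrt{\tau})\bigr)^{-\alpha},$$
matching \eqref{H-function}.

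To close the argument, given any Meixner ensemble $\mu$ on $\HH_{2,\beta}$ with parameters $(a,b)$, the function $g_\mu$ solves the same PDE system with the same boundary data as the candidate, so by the uniqueness above it coincides with the candidate on a neighbourhood of $0\in U_\mX$. Injectivity of the Laplace transform on open sets of $\HH_{2,\beta}$ then identifies $\mu$ with the corresponding ensemble up to the affine normalization absorbed by the standardization.

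The main obstacle is the integrality $-1/(4a)=N\in\NN$ in case (i). The PDE admits an analytic candidate for every $a<0$, but only when $N$ is a positive integer can that candidate be a Laplace transform of a probability measure. This is not a consequence of the PDE but of an additional argument: restricting the candidate $(1+\alpha_1\sigma_1+\alpha_2\sigma_1^2+\alpha_3\sigma_2)^N$ to the diagonal slice of $\HH_{2,\beta}$ and factoring the resulting polynomial, one is in the setting of Proposition \ref{P-Jorg-Bern}, whose J\o rgensen-set constraint together with a parity check forces $N\in\NN$. A secondary subtlety in case (vi) is that several distinct hyperbolic ensembles share the same $(a,b)$, reflecting a genuine $\lambda$-degeneracy of the PDE solutions compatible with the prescribed boundary data; handling that degeneracy is where the detailed analysis of the separated Bessel/trigonometric solutions does the real work.
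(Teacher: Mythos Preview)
Your central claim of uniqueness is false, and this undermines the entire strategy. You assert that the boundary data $g(0,0)$ and $g_1(0,0)$ together with the two PDEs determine the analytic solution uniquely. They do not: in every one of the six cases the space of solutions analytic at the origin and satisfying \eqref{S I ini g}--\eqref{S III ini g} is one-dimensional, not zero-dimensional. Concretely, a power-series calculation shows that $c_{0,1}=g_2(0,0)$ is a free parameter; the paper makes this explicit in Propositions \ref{P6.1}, \ref{Prop Poisson}, \ref{P.G}, where each solution carries an undetermined constant ($C$ or $\lambda$). You even notice this yourself in the last paragraph for case (vi), but the same degeneracy is present in all cases, so your ``uniqueness then matching'' argument cannot work anywhere.

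Because uniqueness fails, the real content of the theorem is not solving the PDEs but determining, for each case, \emph{which} values of the free constant yield a genuine Laplace transform of a probability measure. The paper does this by restricting $L$ to the diagonal matrices $\theta_t^{\pm}$ and using convexity of $\log L$ to pin down sign constraints on the constants; in case (i) an additional argument locating a simple zero of $\calI_{(\beta-1)/2}$ (via Picard-type and Lommel theorems, then Proposition \ref{P:LNN}) forces $-1/(4a)\in\NN$. Your sketch omits all of this. Separately, several of your ans\"atze for $g$ are wrong: in the binomial, Poisson, and negative-binomial cases $g$ is a linear combination of exponentials and the modified Bessel function $\calI_{(\beta-1)/2}$, not a polynomial or a power of a quadratic; and in the hyperbolic case $g$ is the expression inside the parentheses in \eqref{H-function}, not its $(-\alpha)$th power.
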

(The seventh case of non-random ensembles arises from degenerate affine transformations.)

\subsection{Laplace transforms of binomial, negative binomial and Poisson ensembles}

\begin{lemma}\label{S-Laplace}
If $\mP$ is a random projection in Bin$(1,1,0)$, then with $\sigma_j=\sigma_j(\theta)$, we have
\begin{equation}\label{F-function}
\E (e^{\langle\theta |\mP\rangle})=e^{\sigma_1/2}
\calI_{(\beta-1)/2}\left(\sqrt{\frac{\sigma_1^2}{4}-\sigma_2}\right),
\end{equation}
where $\calI_\nu$ is a Bessel function \eqref{F}.
\end{lemma}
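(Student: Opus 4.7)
The plan is to compute $\E(e^{\langle\theta|\mP\rangle})$ directly from the definition of a rank-one random projection and recognize the resulting integral as Poisson's representation of the modified Bessel function.

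First, since the law of $\mP$ is rotation invariant and both sides of \eqref{F-function} depend on $\theta$ only through $\sigma_1(\theta)$ and $\sigma_2(\theta)$, it suffices to treat a diagonal $\theta=\mathrm{diag}(\theta_1,\theta_2)$. Realize $\mP=uu^*$, where the column vector $u$ is Haar-distributed on the unit sphere of $\RR^2$, $\CC^2$, or $\HH^2$ according as $\beta=1,2,4$. Then
$$
\langle\theta|\mP\rangle=\Re\tr(\theta uu^*)=\theta_1|u_1|^2+\theta_2|u_2|^2,
$$
and it is standard that $|u_1|^2$ has the symmetric beta density on $[0,1]$ with both parameters equal to $\beta/2$. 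Hence
$$
L(\theta)=\frac{1}{B(\beta/2,\beta/2)}\int_0^1 e^{\theta_1 t+\theta_2(1-t)}\,t^{\beta/2-1}(1-t)^{\beta/2-1}\,dt.
$$

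Second, the substitution $t=(1+s)/2$ factors out the desired $e^{\sigma_1/2}$ prefactor (using $\sigma_1=\theta_1+\theta_2$) and leaves
$$
L(\theta)=\frac{e^{\sigma_1/2}}{2^{\beta-1}B(\beta/2,\beta/2)}\int_{-1}^{1}e^{((\theta_1-\theta_2)/2)s}(1-s^2)^{\beta/2-1}\,ds.
$$
Poisson's integral representation of $I_\nu$, applied with $\nu=(\beta-1)/2$ and $z=(\theta_1-\theta_2)/2$, identifies the right-hand side with $C_\beta\,e^{\sigma_1/2}(z/2)^{-\nu}I_\nu(z)$ for an explicit scalar $C_\beta$. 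Since $\sqrt{\sigma_1^2/4-\sigma_2}=|z|$ when $\theta$ is diagonal, converting from $I_\nu$ to $\calI_\nu$ via \eqref{F} absorbs the $z^\nu$ factor cleanly and leaves a constant times $\calI_{(\beta-1)/2}\bigl(\sqrt{\sigma_1^2/4-\sigma_2}\bigr)$.

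Finally, I would verify that $C_\beta$ collapses to $1$ using the Legendre duplication formula $\Gamma(\beta)=2^{\beta-1}\Gamma(\beta/2)\Gamma((\beta+1)/2)/\sqrt{\pi}$, together with $B(\beta/2,\beta/2)=\Gamma(\beta/2)^2/\Gamma(\beta)$ and $\Gamma(\nu+1)=\Gamma((\beta+1)/2)$. The main conceptual step is spotting the Poisson representation; the only delicate part is this bookkeeping of gamma factors, but it is forced by the duplication identity. An alternative route would be to check that the proposed right-hand side satisfies the Bernoulli PDE \eqref{LT-Bern} together with the normalization $L(\mO)=1$ and $L'(\mO)=\tfrac12\mI_2$ (since $\EE(\mP_1)=\tfrac12\mI_2$ by \eqref{Eq:P_moms}), but for the $n=2$ case the direct integration seems the cleanest path.
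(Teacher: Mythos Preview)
Your proposal is correct and follows essentially the same path as the paper: reduce to diagonal $\theta$, use that $|u_1|^2$ is $\mathrm{Beta}(\beta/2,\beta/2)$, center via the substitution $t=(1+s)/2$, and identify Poisson's integral for $I_{(\beta-1)/2}$. The only cosmetic difference is that the paper detours through the reflection $\mR=\mI_2-2\mP$ before writing the same integral, whereas you work with $\mP$ directly and make the constant check via the duplication formula explicit.
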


\begin{proof}
To construct $\mP$, we choose a direction $\macierz{X_1\\X_2}$ at random by taking
two independent standard normal random variables as the components of the vector. Note that this means that  $X_1=Z_1+i Z_2$ in the case $\beta=2$ or $X_1=Z_1+iZ_2+jZ_2+kZ_4$ 
in the case $\beta=4$, where $Z_1,Z_2,Z_3,Z_4$ are the standard
real-valued independent normal random variables.
The matrix representation of $\mP$  is
$$\mP=\frac{1}{|X_1|^2+|X_2|^2}\macierz{|X_1|^2 & \bar{X}_2X_1\\
\bar{X}_1X_2& |X_2|^2}.$$
Due to invariance under rotations, without loss of generality we take diagonal $\theta=\macierz{\theta_1&0\\0&\theta_2}$, so that
$$\langle \theta |\mP\rangle=\tr(\theta \mP)=\frac{1}{|X_1|^2+|X_2|^2}\left(\theta_1 |X_1|^2+\theta_2|X_2|^2\right)=\theta_1 U+\theta_2(1-U).$$
Since $|X_1|^2, |X_2|^2$ are independent  gamma distributed with shape parameter $\beta/2$ and scale parameter $1/2$, the distribution of $U=|X_1|^2/(|X_1|^2+|X_2|^2)$ is Beta$(\beta/2,\beta/2)$.
This gives %
$$\E (e^{\langle\theta |\mP\rangle})=
\frac{\Gamma(\beta)}{\Gamma^2(\beta/2)}\int_0^1 e^{u\theta_1+(1-u)\theta_2}u^{\beta/2-1}(1-u)^{\beta/2-1} du.
$$
Substituting $u=(1+v)/2$ so that $u(1-u)=(1-v^2)/4$ %
we get  %
\begin{equation}\label{F-function-I}
L_\mP(\theta)=e^{\theta_1/2+\theta_2/2} \frac{\Gamma(\beta)}{2^{\beta-1}\Gamma^2(\beta/2)}
\int_{-1}^1 e^{(\theta_1-\theta_2)v/2}(1-v^2)^{\beta/2-1} dv.
\end{equation}
The integral is expressed in terms of the Bessel functions in \cite[\S3.71 (9)]{Watson:1944}. %
\end{proof}
\begin{remark}
Denoting by $\theta_1,\theta_2\in \RR$ the eigenvalues of $\theta$, 
$$
\E(e^{\langle\theta |\mR\rangle})=\begin{cases}I_0(\theta_1-\theta_2) & \mbox{ if } \beta=1,\\ \\
\frac{\sinh(\theta_1-\theta_2)}{\theta_1-\theta_2}& \mbox{ if } \beta=2,\\ \\
3\left(\frac{\cosh(\theta_1-\theta_2)}{(\theta_1-\theta_2)^2}-\frac{\sinh(\theta_1-\theta_2)}{(\theta_1-\theta_2)^3}\right)& \mbox{ if } \beta=4.
\end{cases}
$$

\end{remark}

 Combining this with the Laplace transforms derived in the proofs of Propositions \ref{P.B0}. \ref{P.P0}, and \ref{P.NB0}, we get.
\begin{corollary}\label{Bin-Laplace} %
\begin{enumerate}
\item If $\mX$ has Bin$(N, q_1, q_2)$ law, then its Laplace transform is well defined for all  $\theta\in\HH_\beta$, and is given by
\begin{equation}\label{Bin-function}
\E( e^{\langle \theta |\mX\rangle })=e^{N\sigma_1/2}\left( q_0e^{-\sigma_1/2} + q_1
\calI_{(\beta-1)/2}\left(\sqrt{\frac{\sigma_1^2}{4}-\sigma_2}\,\right)+ q_2e^{\sigma_1/2} \right)^N,
\end{equation}
where $ q_0=1- q_1- q_2$.
\item If $\mX$ has Poiss$(\la_1,\la_2)$ law, then its Laplace transform is well defined for all  $\theta\in\HH_\beta$, and is given by
\begin{equation}\label{Poiss-function}
\E (e^{\langle \theta |\mX\rangle })= \exp\left(\la_1e^{\sigma_1/2}
\calI_{(\beta-1)/2}\left(\sqrt{\frac{\sigma_1^2}{4}-\sigma_2}\,\right)+\la_2 e^{\sigma_1}-\la\right)\,,
\end{equation}
where $\la=\la_1+\la_2$.
\item If $\mX$ has NB$(r, q_1,q_2)$ law, then its Laplace transform is well defined for $\theta$ in a neighborhood of $\mO$ in $\HH_{2,\beta}$, and is given by
\begin{equation}\label{NB-function}
\E (e^{\langle \theta |\mX\rangle })=
\frac{(1-(q_1+q_2))^r}{\left(1-\left(q_1e^{\sigma_1/2}
\calI_{(\beta-1)/2}\left(\sqrt{\frac{\sigma_1^2}{4}-\sigma_2}\,\right) +q_2 e^{\sigma_1}\right)\right)^{r}}\;.
\end{equation}
\end{enumerate}

\end{corollary}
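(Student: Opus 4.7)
The plan is to treat the Bernoulli ensemble on $\HH_{2,\beta}$ as the fundamental building block and obtain all three Laplace transforms as algebraic consequences of Lemma \ref{S-Laplace}, using only the specific constructions of $\mathrm{Bin}$, $\mathrm{Poiss}$, and $\mathrm{NB}$ already given in Section \ref{Sect:Meixner_Ensemble}.

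First, I would write down the Laplace transform of a generic Bernoulli ensemble on $\HH_{2,\beta}$ with parameters $q_1,q_2$ (so $q_0=1-q_1-q_2$). By Definition of the Bernoulli ensemble, such a random matrix equals $\mO$, $\mP_1$, or $\mI_2$ with probabilities $q_0,q_1,q_2$. Since $\langle\theta|\mI_2\rangle=\tr\theta=\sigma_1$ and the Laplace transform of $\mP_1$ (which is the $\mathrm{Bin}(1,1,0)$ ensemble) is given explicitly in Lemma \ref{S-Laplace}, we obtain
\begin{equation}\label{LB-formula}
L_B(\theta)=q_0+q_1 e^{\sigma_1/2}\calI_{(\beta-1)/2}\!\left(\sqrt{\tfrac{\sigma_1^2}{4}-\sigma_2}\right)+q_2 e^{\sigma_1}.
\end{equation}
This is already the content of the corollary for $N=1$ once we factor $e^{\sigma_1/2}$ out of each summand.

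Second, for (i): by Definition \ref{Def-Bin}, a $\mathrm{Bin}(N,q_1,q_2)$ matrix is the sum of $N$ independent copies of a single Bernoulli matrix, so its Laplace transform is $L_B(\theta)^N$. Factoring $e^{N\sigma_1/2}$ out of $L_B(\theta)^N$ yields the stated formula \eqref{Bin-function}. Finiteness on all of $\HH_{2,\beta}$ follows because $L_B$ is entire in $\theta$ (the Bessel function $\calI_\nu$ is entire).

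Third, for (ii): as observed in the proof of Proposition \ref{P.P0}, if $k(\theta)$ denotes the logarithm of the Poisson Laplace transform then $k(\theta)=\la(L_B(\theta)-1)$, where $L_B$ is the Bernoulli Laplace transform with $q_i=\la_i/\la$ and $\la=\la_1+\la_2$. Substituting \eqref{LB-formula} and using $\la q_0-\la=-\la_1-\la_2=-\la$ gives exactly \eqref{Poiss-function}. Again the expression is finite on all of $\HH_{2,\beta}$.

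Fourth, for (iii): by \eqref{Lap-BN} from the proof of Proposition \ref{P.NB0}, the negative binomial Laplace transform equals $p^r(1-qL_B(\theta))^{-r}$, where $p=1-(q_1+q_2)$, $q=q_1+q_2$, and $L_B$ is the Bernoulli transform with \emph{normalized} parameters $q_i/q$, so in particular its $q_0$-coefficient is $0$. Inserting \eqref{LB-formula} with those normalized parameters, the $q\cdot(q_i/q)$ factors collapse to $q_i$ and we obtain \eqref{NB-function}. Finiteness in a neighborhood of $\mO$ follows from $L_B(\mO)=1$ and $q<1$, giving $qL_B(\theta)<1$ near $\mO$ by continuity.

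The only nontrivial input is Lemma \ref{S-Laplace}, which is already proved; everything else is algebraic manipulation using the definitions of the three ensembles. There is no serious obstacle, only the bookkeeping needed to recognize that the factor $e^{\sigma_1/2}$ in the Bernoulli formula produces the various prefactors $e^{N\sigma_1/2}$, $e^{\sigma_1/2}$ (inside the exponent), and the $p^r$ normalization of \eqref{NB-function}.
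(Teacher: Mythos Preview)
Your proposal is correct and follows exactly the approach the paper indicates: the paper simply states that the corollary follows by ``combining [Lemma \ref{S-Laplace}] with the Laplace transforms derived in the proofs of Propositions \ref{P.B0}, \ref{P.P0}, and \ref{P.NB0},'' and your write-up carries out precisely that combination in detail.
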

\begin{remark} \label{Rem4Thm5.1}
A calculation shows that for a given $b^2>4a>0$,  parameter $p=1-q_1-q_2$ in \eqref{NB-function} can range over the set
$$\{p\in(0,1): p/(2-p)\leq \kappa/b \leq p\}\,,$$ while $q_2=(b p-\kappa)/\kappa\in[0,1-p]$ and $q_1=1-p-q_2$ are uniquely determined by $p$. (Here $\kappa=\sqrt{b^2-4a}$, $b>0$.)
\end{remark}

\subsection{Solutions of the system of PDEs for $n=2$}
\label{Sect: Quadratic Solutions}

We  consider in more detail the case $n=2$ with
\begin{equation}\label{U2}
U=\{(\sigma_1,\sigma_2):  \; 4\sigma_2< \sigma_1^2\}.
\end{equation}
We  require that the solutions extend by continuity to $\sigma_1=\sigma_2=0$.

\subsubsection{Case  $a\ne0$ (Theorem \ref{T1}(i))}
The system of PDEs simplifies to:
\begin{eqnarray}
  g_{11}-\sigma_2 g_{22}&=&\frac{\beta}{2}g_2+(b^2-4a) g ,\label{I b}\\
  2g_{12}+\sigma_1g_{22}&=&0 .\label{I a}
\end{eqnarray}
We will use as the  initial conditions
\begin{equation}\label{I ini}
g(0,0)=1,\; \frac{\partial g(\sigma_1,0)}{\partial \sigma_1}\big|_{\sigma_1=0}=-b.
\end{equation}

Denote $\kappa=\sqrt{|b^2-4a|}$.

\begin{proposition}\label{P6.1} Consider   the system \eqref{I b}, \eqref{I a} with initial conditions \eqref{I ini}.
\begin{enumerate}
  \item\label{i} For   $b^2=4a>0$ all solutions are
  $$g(\sigma_1,\sigma_2)=1-b \sigma_1 +C ( {  \beta\,{{{\sigma }_1}}^2} +4\sigma_2), $$
where $C$ is an arbitrary constant. Accordingly,
\begin{equation}\label{f ans}
L(\theta)=\frac{\exp(-\sigma_1(\theta)/b)}{(1-b \sigma_1(\theta) +C ( {  \beta\,{{{\sigma }_1}^2(\theta)}} +4\sigma_2(\theta)))^{1/b^2}}\;.
\end{equation}

\item\label{ii} For $b^2>4a$
 all solutions are
  \begin{equation}\label{gggg}
  g(\sigma_1,\sigma_2)=C_1 e^{\kappa \sigma_1}+C_2 e^{-\kappa \sigma_1}+
 C_3  \calI_{(\beta-1)/2}(\kappa\sqrt{\sigma_1^2/4-\sigma_2}),
\end{equation}
where $\calI_{(\beta-1)/2}$ is defined in \eqref{F}, and $C_1,C_2,C_3$ are arbitrary real numbers such that $C_1+C_2+C_3=1$ and $C_2-C_1=b/\kappa$.
  Accordingly,
    \begin{equation}\label{Bin-answer}
 L(\theta)=e^{- b \sigma_1/(4a)}\left(C_1e^{\kappa\sigma_1}+C_2e^{-\kappa\sigma_1}+C_3 \calI_{(\beta-1)/2}\left(\kappa\sqrt{\sigma_1^2-4\sigma_2}\right)\right)^{-{1}/{(4a)}},
\end{equation}
which can also be written as
\begin{multline}\label{NB-cosh}
 L(\theta)=e^{- b \sigma_1/(4a)}\Big((1-\la)\cosh(\kappa\sigma_1)-\frac{b}{\kappa}\sinh(\kappa\sigma_1)\\+\la \calI_{(\beta-1)/2}\left(\kappa\sqrt{\sigma_1^2-4\sigma_2}\right)\Big)^{-{1}/{(4a)}},
\end{multline}
where $\la\in\RR$ is an arbitrary constant.

 \item\label{iv} For   $b^2<4a$ all solutions are

$$g(\sigma_1,\sigma_2)=(1-\lambda) \cos \kappa\sigma_1-\frac{b}{\kappa}\sin \kappa\sigma_1+\lambda \calJ_{(\beta-1)/2}\left(\kappa\sqrt{\sigma_1^2-4\sigma_2}\right),$$
where $\la$ is an arbitrary real constant.
 Accordingly,
\begin{multline}\label{H-ans}
L(\theta)=e^{-b \sigma_1/(4a)} \Big((1-\lambda) \cos \kappa\sigma_1-\frac{b}{\kappa}\sin \kappa\sigma_1
 +\lambda \calJ_{(\beta-1)/2}\big(\kappa\sqrt{\sigma_1^2-4\sigma_2}\big)\Big)^{-1/(4a)}.\\
\end{multline}
 \end{enumerate}
\end{proposition}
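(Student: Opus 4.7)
The plan is to introduce the auxiliary coordinate $u=\sigma_1^2/4-\sigma_2$ and rewrite $g(\sigma_1,\sigma_2)$ as a function $F(\sigma_1,u)$. A direct chain-rule computation gives $2g_{12}+\sigma_1 g_{22}=-2F_{\sigma_1 u}$, so equation \eqref{I a} is equivalent to $F_{\sigma_1 u}=0$, which forces
\[
g(\sigma_1,\sigma_2)=A(\sigma_1)+B(u)
\]
for some one-variable functions $A$, $B$. Substituting this ansatz into \eqref{I b}, using $(\sigma_1^2/4-\sigma_2)B''(u)=uB''(u)$ and $g_2=-B'(u)$, one checks that the $\sigma_1$- and $u$-dependent parts must separately equal a constant $K\in\RR$, yielding the decoupled pair of ODEs
\[
A''(\sigma_1)-(b^2-4a)A(\sigma_1)=K,\qquad uB''(u)+\tfrac{\beta+1}{2}B'(u)-(b^2-4a)B(u)=-K.
\]

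The $A$-equation is elementary, with homogeneous solutions $\{1,\sigma_1\}$ in case (i), $\{e^{\kappa\sigma_1},e^{-\kappa\sigma_1}\}$ in case (ii), and $\{\cos\kappa\sigma_1,\sin\kappa\sigma_1\}$ in case (iv), each augmented with an explicit constant or quadratic particular solution. The $B$-equation has a regular singular point at $u=0$: in case (i) it collapses to $uB''+\tfrac{\beta+1}{2}B'=-K$, integrable by a single quadrature; in cases (ii) and (iv) the substitution $B(u)=\psi(2\kappa\sqrt u)$ reduces the homogeneous part to the defining ODE for the normalized Bessel function $\calI_{(\beta-1)/2}$ or $\calJ_{(\beta-1)/2}$, which I would verify by termwise differentiation of the series in \eqref{F} and \eqref{G}. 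In all three cases the second Frobenius mode near $u=0$ behaves like $u^{-(\beta-1)/2}$ for $\beta\in\{2,4\}$ and like $\log u$ for $\beta=1$; neither is compatible with the stated continuous extension of $g$ to $(\sigma_1,\sigma_2)=(0,0)$, so this branch is discarded. The constant pieces from the $A$- and $B$-particular solutions cancel, so three free parameters survive.

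The two initial conditions \eqref{I ini} impose two linear relations on these parameters: $g(0,0)=1$ uses $\calI_\nu(0)=\calJ_\nu(0)=1$ to fix the sum of the coefficients (for example $C_1+C_2+C_3=1$ in case (ii)), while $\partial_1 g(\sigma_1,0)|_{\sigma_1=0}=-b$ exploits that the Bessel-type summand is a power series in $u$ whose first partial in $\sigma_1$ vanishes at the origin, leaving only the $A$-term to contribute (giving $C_2-C_1=b/\kappa$ in case (ii), and the corresponding relations in the other cases). This yields the stated one-parameter families for $g$. The Laplace-transform formulas \eqref{f ans}, \eqref{Bin-answer}, \eqref{NB-cosh}, and \eqref{H-ans} then follow directly from $L=g^{-1/(4a)}\exp(-b\sigma_1/(4a))$ supplied by Theorem~\ref{T1}(i); the $(\cosh,\sinh)$ and $(\cos,\sin)$ reformulations in \eqref{NB-cosh} and \eqref{H-ans} come from the linear change of parameters $C_1+C_2=1-\la$, $C_1-C_2=-b/\kappa$. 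The main technical obstacle is the Frobenius-type regularity argument at $u=0$ that excludes the singular Bessel mode; once that branch is ruled out by continuity at the origin, uniqueness of the analytic family of solutions is immediate.
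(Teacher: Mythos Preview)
Your proof is correct and arrives at the same decoupled ODEs as the paper, but via a cleaner route. The paper does not change variables at the outset: instead it first records the $\sigma_1$-only solutions of $g_{11}=(b^2-4a)g$, then integrates \eqref{I a} in $\sigma_2$ to obtain $2g_1+\sigma_1 g_2=C(\sigma_1)$, differentiates in $\sigma_1$, and forms a linear combination with \eqref{I b} and \eqref{I a} to eliminate $g_{11}$ and $g_{12}$. This produces, for each fixed $\sigma_1$, the ODE $(\sigma_1^2/4-\sigma_2)g_{22}-\tfrac{1+\beta}{2}g_2-(b^2-4a)g=-C'(\sigma_1)/2$, which after the shift $t=\sigma_1^2/4-\sigma_2$ becomes your $B$-equation. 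The paper then writes the bounded solution as $K(\sigma_1)\calI_{(\beta-1)/2}(\kappa\sqrt{t})$ with an a priori unknown prefactor, and substitutes back into \eqref{I a} to discover $K'=0$.

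Your change of coordinates $(\sigma_1,\sigma_2)\mapsto(\sigma_1,u)$ makes the additive structure $g=A(\sigma_1)+B(u)$ fall out of \eqref{I a} in one line, after which \eqref{I b} separates into the two ODEs without any back-substitution step. This is more transparent and avoids the paper's two-pass argument (solve in $\sigma_2$, then constrain the $\sigma_1$-dependence). The paper's approach, on the other hand, is perhaps more mechanical and would generalize verbatim to the Poisson case \eqref{IIa}--\eqref{IIb} treated in Proposition~\ref{Prop Poisson}, where the right-hand side of \eqref{IIb} is $2bg_2$ rather than $0$ and your coordinate change no longer kills the mixed term outright. Both proofs handle the exclusion of the singular Frobenius branch at $u=0$ in the same way, by invoking the required continuity of $g$ at the origin.
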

\begin{proof}

The solutions that depend only on  $\sigma_1$ satisfy equation $g_{11}=(b^2-4a) g$ with two linearly independent solutions:
\begin{eqnarray}
& e^{\kappa\sigma_1},\;e^{-\kappa\sigma_1}  & \mbox{ if $b^2-4a>0$} ,\label{+}\\
& \cos\left(\kappa\sigma_1\right),\;\sin\left(\kappa\sigma_1\right)&  \mbox{ if $b^2-4a<0$} \label{-},\\
&1,\; \sigma_1 & \mbox{ if $b^2-4a=0$} .\label{0}
\end{eqnarray}

To find the solutions that depend also on $\sigma_2$, we first note that  \eqref{I a} implies that
\begin{equation}\label{C-eqn}
2g_1+\sigma_1 g_2=C(\sigma_1),
\end{equation}
so
\begin{equation}\label{C'-eqtn}
2g_{11}+g_2+\sigma_1 g_{12}=C'(\sigma_1).
\end{equation}
We now  eliminate $g_{11}$ and $g_{12}$ from the equations. Subtracting from \eqref{C'-eqtn} the linear combination of $2$ times equation \eqref{I b} and $\sigma_1/2$ times equation \eqref{I a} we get
\begin{equation}\label{ODE I}
(\sigma_1^2/4 -\sigma_2) g_{22}- \frac{1+\beta }{2}g_2-(b^2-4a) g=-C'(\sigma_1)/2.
\end{equation}
For fixed $\sigma_1$, we consider \eqref{ODE I} as a differential equation with respect to $\sigma_2$ in the interval $\sigma_2<\sigma_1^2/4$, since we need $g$ in this domain, see \eqref{U2}. For this reason, we denote
$t=\sigma_1^2/4-\sigma_2$ and, assuming $b^2\ne 4a$, we denote
$$y(t)=g(\sigma_1,\sigma_2)- \frac{C'(\sigma_1)}{2 (b^2-4a)}.
$$
Thus $g_2=-y'$, $g_{22}=y''$ and \eqref{ODE I} becomes
\begin{equation}\label{Gerard-I}
2t y''+(1+\beta)y'\pm 2\kappa^2 y =0,
\end{equation}
where the sign is chosen according to the sign of $4a-b^2$.
Substitution  $y(t)=t^{(1-\beta)/4}u(x)$  with $x=2\kappa \sqrt{t}$ converts
 \eqref{Gerard-I} into the Bessel equation
 $$x^2 u''+ x u'+\left(\pm x^2-\left(\frac{\beta-1}{2}\right)^2\right)u=0.$$
Using \cite[9.6.18]{abramowitz+stegun}  or \cite[9.1.20]{abramowitz+stegun},   after some calculation one verifies that  all solutions of \eqref{Gerard-I} which are bounded in a neighborhood of $0$ are proportional to
$\calI_{(\beta-1)/2}(\kappa\sqrt{t})$ when $b^2>4a$ and to $\calJ_{(\beta-1)/2}(\kappa\sqrt{t})$ when $b^2<4a$, see \eqref{F} and \eqref{G}.
Substituting $g(\sigma_1,\sigma_2)=K(\sigma_1) \calI_{(\beta-1)/2}(\kappa\sqrt{\sigma_1^2/4-\sigma_2})$
back into \eqref{I a}   we get that \eqref{I a} holds if $K'=0$. So we deduce that  $K$ is constant, and a direct verification shows that $g$ solves also \eqref{I b}.
(We repeat the same reasoning with $\calJ_{(\beta-1)/2}$ instead of $\calI_{(\beta-1)/2}$ if $b^2<4a$.)

Similarly, one works out the solutions for $b^2=4a>0$. In this case, after taking $y(t)=g(\sigma_1,\sigma_2)+ 2\sigma_2 C'(\sigma_1)/(1+\beta)$   we get equation \eqref{Gerard-I}  with $\kappa=0$.
After some work this leads to elementary solutions proportional to $g(\sigma_1,\sigma_2)=\frac{\beta\,{{{\sigma }_1}}^2}{4} + {{\sigma
}_2}$ and to additional solutions  $g(\sigma_1,\sigma_2)=\left(\sigma_1^2-4\sigma_2\right)^{(1-\beta)/2}$ for $\beta>1$ or $g(\sigma_1,\sigma_2)=\log(\sigma_1^2-4\sigma_2)$ for $\beta=1$, which are unbounded at the origin.

 To conclude the proof of \eqref{i}, we note that
 the general solution that is defined at $\sigma_1=\sigma_2=0$ is
$$g(\sigma_1,\sigma_2)=C_1+C_2\sigma_1+C_3 ( {  \beta\,{{{\sigma }_1}}^2} +4\sigma_2).$$
The initial conditions determine $
C_1= 1,\; C_2=-b$, which gives \eqref{f ans}.

To conclude the proof of \eqref{ii}, we note
 that the general solution bounded at the origin is \eqref{gggg},
 and the initial conditions are satisfied when $C_1+C_2+C_3=1$ and $C_2-C_1=b/\kappa$. This gives \eqref{Bin-answer}.

The initial conditions similarly imply \eqref{H-ans}.

\end{proof}

We are now ready to prove that gamma and hyperbolic Meixner ensembles are indeed Meixner.
\begin{proof}[Proof of Proposition \ref{Gamma-Laplace}]
 To calculate the moments, we differentiate \eqref{Gindikin} to get $\E (\xi_0)=2cp$, $\E(\xi_1)=0$,
 $\mbox{Var}(\xi_0)=2p(2c^2-1)$, $\E (\xi_1^2)=2p$, and  $\E(\xi_{\ell_1}\xi_\ell{_2})=0$ for $\ell_1\ne \ell_2$.

For   the standardized gamma ensemble $\widetilde\mX$ with  $\E (\widetilde \mX)=\mO$ and  $\E (\widetilde \mX^2)=\mI$, from
\eqref{Gamma-function} we get
\begin{equation}\label{Gamma-std}
\E (e^{\langle \theta |\widetilde\mX\rangle })=e^{-\sqrt{p}\tr\theta} \left( 1- \tr\theta/\sqrt{p}+  D(\beta\tr\theta^2+4\det\theta)\right)^{-p},
\end{equation}
where  $D=1/(4pc^2(1+\beta))$.  This is  \eqref{f ans} with $p=1/b^2$, so property \eqref{pre-ME}  follows from the converse part of Theorem \ref{T1}.
For the standardized version, \eqref{Meixner-Anshelevich} holds with $a=1/(4p)$ and $b=1/\sqrt{p}$.
Formula for the parameters $A,B,C$ is now re-calculated from formula \eqref{ABC2ab}.

Note that the admissible ranges of parameters are $p>\beta/2$ and $0\leq D<1/(4p(1+\beta))$.

\end{proof}

\begin{proof}[Proof of Proposition \ref{H-Laplace}]
We compute the moments of $\mX$ entrywise:   differentiating \eqref{H*H*} we get   
 $\E \xi_j=0$, $\E\xi_j^2=\alpha\la/(1+\beta)$ for $j\geq 1$, and
 $\E(\xi_{\ell_1}\xi_\ell{_2})=0$ for $\ell_1\ne \ell_2$. This gives the first two moments of $\mX$.

For   the standardized hyperbolic Meixner ensemble $\widetilde\mX$ with  $\E(\widetilde \mX)=\mO$ and  $\E (\widetilde \mX^2)=\mI$, from
\eqref{H-function} we get
$$
\E e^{\langle\theta|\widetilde \mX\rangle}=
 \Big(
1+
\calJ_{(\beta-1)/2}\big(\sqrt{\tr \theta^2-4\det\theta}/\sqrt{\alpha}\big)\Big)^{-\alpha}.
$$
Substituting $1/\alpha=4a$, we get \eqref{H-ans} with $b=0$, $\kappa=1/\sqrt{\alpha}$, and $\la=1$.
So   from the converse part of Theorem \ref{T1} we see that \eqref{Meixner-Anshelevich} holds with $a=1/(4\alpha)$ and $b=0$. Formula for the parameters $A,B,C$ is now re-calculated from formula \eqref{ABC2ab}.

\end{proof}

 \subsubsection{Poisson case: $a=0,b\ne0$  (Theorem \ref{T1}(ii))}

The system of PDEs simplifies to:
\begin{eqnarray}
  g_{11}-\sigma_2 g_{22}&=&\frac{\beta}{2}g_2+ 2b g_1, \label{IIa}\\
  2g_{12}+\sigma_1g_{22}&=&2b g_2 .\label{IIb}
\end{eqnarray}
We seek solutions such that
\begin{equation}\label{II-ini}
g(0,0)=0,\;   \frac{\partial g(\sigma_1,0)}{\partial \sigma_1}\big|_{\sigma_1=0}=\frac{1}{2b}.
\end{equation}

\begin{proposition}[Poisson ensemble]
\label{Prop Poisson}Consider the system \eqref{IIa}, \eqref{IIb}  with initial condition \eqref{II-ini}.
The solution is %
$$
g(\sigma_1,\sigma_2)=\frac{1}{2 b^2}\left(  (1-C)e^{2b \sigma_1}+(2C- 1)e^{b \sigma_1}\calI_{(\beta-1)/2}\left(b\sqrt{\sigma_1^2-4\sigma_2}\right)-C\right),
$$
where $C$ is an arbitrary constant.
Thus
\begin{multline}\label{Poiss-answer}
L(\theta)=\exp\Big(-\frac{\sigma_1}{2b}\\ +
\frac{1}{2 b^2}\left(  (1-C)e^{2b \sigma_1}+(2C- 1)e^{b \sigma_1}\calI_{(\beta-1)/2}\left(b\sqrt{\sigma_1^2-4\sigma_2}\right)-C \right)\Big).
\end{multline}

\end{proposition}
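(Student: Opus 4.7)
The proof runs in close parallel to that of Proposition \ref{P6.1} (case $b^2>4a$), and I would use the same change of variables $t = \sigma_1^2/4 - \sigma_2$ that was successful there, writing $g(\sigma_1,\sigma_2)=h(\sigma_1,t)$. A direct chain rule computation shows that in the new coordinates the $\sigma_1 h_{tt}$ contributions from $2g_{12}$ and from $\sigma_1 g_{22}$ cancel identically, so equation \eqref{IIb} collapses to the simple first-order relation
\[
h_{st}(s,t) \;=\; b\,h_t(s,t),
\]
where $s=\sigma_1$. Integrating first in $s$ and then in $t$ produces the clean decomposition
\[
h(s,t) \;=\; \psi(s) + e^{bs}\,\Phi(t)
\]
for suitable univariate $\psi$ and $\Phi$.

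Substituting this ansatz into \eqref{IIa} and separating the identity into its $s$-only part and its $e^{bs}$-coefficient (which must vanish independently, being linearly independent as functions of $s$) yields two uncoupled ODEs. The $s$-only part reads $\psi''(s)=2b\,\psi'(s)$, whose general solution is $\psi(s)=A_1e^{2bs}+A_0$. The $e^{bs}$-coefficient, after the $b\sigma_1$ terms coming from $\sigma_1 h_{st}$ and $2b g_1$ cancel, reduces to
\[
2t\,\Phi''(t) + (1+\beta)\,\Phi'(t) - 2b^2\,\Phi(t) \;=\; 0,
\]
which is exactly equation \eqref{Gerard-I} from the proof of Proposition \ref{P6.1}, with $\kappa=b$ (consistent with $b^2-4a=b^2$ since $a=0$). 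The Bessel analysis carried out there identifies the unique solution bounded at $t=0$ as $\Phi(t)=C_3\,\calI_{(\beta-1)/2}(b\sqrt{t})$, so that
\[
g(\sigma_1,\sigma_2) \;=\; A_1 e^{2b\sigma_1} + A_0 + C_3\,e^{b\sigma_1}\,\calI_{(\beta-1)/2}\!\bigl(b\sqrt{\sigma_1^2/4-\sigma_2}\bigr).
\]

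Finally, I would impose the initial conditions \eqref{II-ini}. Since $\calI_{(\beta-1)/2}(0)=1$ and the chain rule brings out a factor $\partial_{\sigma_1}(\sigma_1^2/4-\sigma_2)=\sigma_1/2$ that vanishes at the origin, the derivative of the Bessel factor contributes nothing to $g_1(0,0)$. Consequently \eqref{II-ini} becomes the two linear constraints $A_0+A_1+C_3=0$ and $2bA_1+bC_3=1/(2b)$, leaving a one-parameter family. Reparametrizing through a free constant $C$ via $A_1=(1-C)/(2b^2)$ forces $C_3=(2C-1)/(2b^2)$ and $A_0=-C/(2b^2)$, reproducing the announced closed form, and \eqref{Poiss-answer} then follows from Theorem \ref{T1}(ii) by exponentiation. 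The only genuine step—and the same one as in Proposition \ref{P6.1}—is the discarding of the second, singular, solution of the Bessel ODE; this is forced by the hypothesis that $g$ extend continuously to $(\sigma_1,\sigma_2)=(0,0)$ inside $U_\mX$, which is the main place where one must be careful.
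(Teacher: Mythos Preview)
Your argument is correct and reaches the same three–parameter family and the same initial–condition constraints as the paper, but the route is a bit different. The paper does not change variables at the outset; instead it integrates \eqref{IIb} in $\sigma_2$ to obtain $2g_1+\sigma_1 g_2-2bg=C(\sigma_1)$, differentiates in $\sigma_1$, and then eliminates $g_{11},g_{12},g_1$ by a linear combination with \eqref{IIa} and \eqref{IIb}, arriving at the Bessel–type ODE \eqref{Gerard-P} in $\sigma_2$ with $\sigma_1$ frozen. Only after solving that does it substitute back into \eqref{IIb} to determine the $\sigma_1$–dependent prefactor $K(\sigma_1)=e^{b\sigma_1}$. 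Your upfront passage to $(s,t)=(\sigma_1,\sigma_1^2/4-\sigma_2)$ collapses \eqref{IIb} to $h_{st}=bh_t$ immediately, giving the separated form $h=\psi(s)+e^{bs}\Phi(t)$ without the elimination step; this is arguably cleaner and explains structurally \emph{why} the prefactor must be $e^{bs}$.

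One small imprecision: when you substitute $h=\psi(s)+e^{bs}\Phi(t)$ into \eqref{IIa} and obtain
\[
[\psi''(s)-2b\psi'(s)]+e^{bs}\Big[t\Phi''(t)+\tfrac{1+\beta}{2}\Phi'(t)-b^2\Phi(t)\Big]=0,
\]
the two brackets need not vanish separately on linear–independence grounds alone, since the first bracket could equal $c_0 e^{bs}$ with the second equal to $-c_0$. What saves you is the gauge freedom in the decomposition $h=\psi+e^{bs}\Phi$ (shift $\Phi$ by a constant and $\psi$ by a multiple of $e^{bs}$), which lets you normalise $c_0=0$; after that your two ODEs are exactly the ones you wrote. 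It would be worth saying this explicitly. Also, to match the statement's argument $b\sqrt{\sigma_1^2-4\sigma_2}$, note that the bounded solution of $2t\Phi''+(1+\beta)\Phi'-2b^2\Phi=0$ is $\calI_{(\beta-1)/2}(2b\sqrt{t})$, i.e.\ $\calI_{(\beta-1)/2}\!\big(b\sqrt{\sigma_1^2-4\sigma_2}\big)$, rather than $\calI_{(\beta-1)/2}(b\sqrt{t})$.
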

\begin{proof}

The solution that depends on $\sigma_1$ only is in \cite{Laha-Lukacs60}; in our notation,
$$g(\sigma_1)=C_1+C_2 e^{2 b \sigma_1}.$$
To find the solutions that depend on both variables, we
first note that \eqref{IIb} implies
\begin{equation}\label{IIc}
2g_1+\sigma_1g_2-2ag=C(\sigma_1),
\end{equation}
which we differentiate with respect to $\sigma_1$ to get
\begin{equation}\label{IId}
2g_{11}+g_2+\sigma_1g_{12}-2ag_1=C'(\sigma_1).
\end{equation}
We now eliminate $g_{11}$, $g_{12}$ and $g_1$ from the equations.
To do so, from equation \eqref{IId} we subtract $2$ times equation \eqref{IIa}, $b$ times equation \eqref{IIc}, and
$\sigma_1/2$ times equation \eqref{IIb}. This gives
\begin{equation}\label{Gerard-P}
(\sigma_1^2/4-\sigma_2)g_{22}-\frac{1+\beta}2g_2- b^2 g=-(C'(\sigma_1)+aC(\sigma_1))/2.
\end{equation}
Noting similarity with \eqref{ODE I}, we again we consider \eqref{Gerard-P} as a differential equation with respect to $\sigma_2$ in the interval $\sigma_2<\sigma_1^2/4$. With  $b\ne 0$,   $t=\sigma_1^2/4-\sigma_2$, we take
$$
y(t)=g(\sigma_1,\sigma_2)-\frac{C'(\sigma_1)+aC(\sigma_1)}{2b^2},
$$
and we get
$$2t y''+(1+\beta)y'-2b^2 y =0.$$
Therefore, the solution bounded at the origin is   $g(\sigma_1,\sigma_2)=K(\sigma_1) \calI_{(\beta-1)/2}(b\sqrt{\sigma_1^2/4-\sigma_2})$. Substituting this back into \eqref{IIb}, we get  $K'=b K$, {\em i.e.},  $K(\sigma_1)=e^{b \sigma_1}$, and the resulting
$g(\sigma_1,\sigma_2)$ solves \eqref{IIa}, too.

Thus the general solution bounded at the origin is
$$
g(\sigma_1,\sigma_2)=C_1+C_2 e^{2 b \sigma_1}+C_3  e^{b \sigma_1} \calI_{(\beta-1)/2}(b\sqrt{\sigma_1^2/4-\sigma_2}).$$
The initial conditions are satisfied when $C_1+C_2+C_3=0$ and $2C_2+C_3=1/(2b^2)$.
 After some calculations, we get the answer.
\end{proof}

\subsubsection{Gaussian  case: $a=0,b=0$  (Theorem \ref{T1}(iii))}
The system of PDEs %
simplifies to:
\begin{eqnarray}
  g_{11}-\sigma_2 g_{22}&=&\frac{\beta}{2}g_2+ 1, \label{III b}\\
  2g_{12}+\sigma_1g_{22}&=&0 .\label{III a}
\end{eqnarray}

\begin{proposition}\label{P.G} Consider the system \eqref{III b}, \eqref{III a} with zero initial conditions. The solution is
\begin{equation}\label{Gauss-PDE-sol}
C\sigma_1^2/2-(1-C)\frac{2}{\beta}\sigma_2,
\end{equation}
where $C\in\RR$ is arbitrary. Thus
\begin{equation}\label{Gauss-answer}
L(\theta)=\exp\left(C\frac{\sigma_1^2}{2}-(1-C)\frac{2}{\beta}\sigma_2\right).
\end{equation}

\end{proposition}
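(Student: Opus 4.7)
The plan is to mirror the ODE-reduction strategy used for the Bernoulli--binomial case in Proposition \ref{P6.1} and the Poisson case in Proposition \ref{Prop Poisson}. Direct substitution shows at once that $g(\sigma_1,\sigma_2)=C\sigma_1^2/2-(1-C)(2/\beta)\sigma_2$ solves \eqref{III b}, \eqref{III a} and vanishes together with its $\sigma_1$-derivative at the origin, so the bulk of the argument is uniqueness modulo the one-parameter family.

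For uniqueness, I would first observe that \eqref{III a} is equivalent to $\partial_{\sigma_2}(2g_1+\sigma_1 g_2)=0$, so $2g_1+\sigma_1 g_2=C(\sigma_1)$ is independent of $\sigma_2$. Differentiating this in $\sigma_1$ and using \eqref{III b} to eliminate $g_{11}$ and \eqref{III a} to eliminate $\sigma_1 g_{12}$, one arrives, with $t=\sigma_1^2/4-\sigma_2$ and $y(t)=g(\sigma_1,\sigma_2)$ for fixed $\sigma_1$, at the ODE
\begin{equation*}
2t\,y''(t)+(\beta+1)\,y'(t)=D(\sigma_1),
\end{equation*}
where $D(\sigma_1)=2-C'(\sigma_1)$. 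The homogeneous ODE has two independent solutions $1$ and $t^{(1-\beta)/2}$ (or $\log t$ when $\beta=1$); only the constant solution is bounded at $t=0$, so the requirement that $g$ extend continuously to $\sigma_1=\sigma_2=0$ forces the unbounded branch to vanish. A particular solution of the inhomogeneous equation is $D(\sigma_1)t/(\beta+1)$, hence
\begin{equation*}
g(\sigma_1,\sigma_2)=K_1(\sigma_1)+\alpha(\sigma_1)\bigl(\sigma_1^2/4-\sigma_2\bigr),\qquad \alpha(\sigma_1)=D(\sigma_1)/(\beta+1).
\end{equation*}

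Plugging this Ansatz back into \eqref{III a} yields $\alpha'(\sigma_1)=0$, so $\alpha$ is a constant; plugging it into \eqref{III b} then collapses to $K_1''(\sigma_1)=1-\alpha(1+\beta)/2$. The two initial conditions $g(0,0)=0$ and $\partial_{\sigma_1}g(0,0)=0$ kill the integration constants in $K_1$, giving $K_1(\sigma_1)=(1-\alpha(1+\beta)/2)\sigma_1^2/2$. Setting $\alpha=2(1-C)/\beta$ and simplifying the coefficient of $\sigma_1^2$ as $1/2-\alpha\beta/4=C/2$ recovers exactly \eqref{Gauss-PDE-sol}, and then \eqref{Gauss-answer} follows from part (iii) of Theorem \ref{T1}, where $k(\theta)=g(\sigma_1(\theta),\sigma_2(\theta))$.

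The only delicate point is the discarding of the $t^{(1-\beta)/2}$ (or $\log t$) branch of the homogeneous ODE; this is the analogue of the boundedness argument used at the corresponding stage of Proposition \ref{P6.1}, and relies crucially on $(0,0)$ lying in the closure of the admissible domain $U_\mX$ (established in Section \ref{Sect:PDE}). The remaining steps are bookkeeping and should present no obstacle.
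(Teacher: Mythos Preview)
Your proposal is correct and follows essentially the same approach as the paper: the paper simply omits the details, noting that the homogeneous system coincides with the $b^2-4a=0$ case of Proposition \ref{P6.1}(i), and you have faithfully spelled out exactly that ODE reduction (including the same boundedness argument to discard the $t^{(1-\beta)/2}$ or $\log t$ branch). Your computation of the coefficients matches, and the passage from \eqref{Gauss-PDE-sol} to \eqref{Gauss-answer} via Theorem \ref{T1}(iii) is correct.
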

Since the homogeneous system is the same as in the case $b^2-4a=0$ of Proposition \ref{P6.1}(i), we omit the details.

We recall the following facts about univariate Laplace transforms, which  belong to the folklore:
\begin{theoremA}\label{P:LNN} 
Suppose $L(t)$ is a Laplace transform of a real random variable $X$. If $L(t)$ is defined on an open interval $\Theta$, then $L$ has analytic extension to $\Theta+ i\RR$. If $z_0\in \Theta+ i\RR$ is a  zero of $L$ of order $m\in\NN$, then $L^\alpha$ cannot be a Laplace transform of a probability measure unless $m \alpha\in\NN$.
\end{theoremA}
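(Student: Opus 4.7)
The plan is to split the statement into its two claims and address them in order. For the analytic extension, I will appeal to the standard argument: for $z$ with $\Re z\in\Theta$ the integral $\EE(e^{zX})$ converges absolutely because $|e^{zX}|=e^{(\Re z)X}$ and $\Re z$ lies in the domain of the real Laplace transform, and dominated convergence delivers a complex derivative $\EE(Xe^{zX})$ (using that $\Theta$ is open, so one can majorize $|X|e^{(\Re z)X}$ by $e^{(\Re z+\eps)X}+e^{(\Re z-\eps)X}$ for small $\eps$). This makes the resulting function holomorphic on the strip $\Theta+i\RR$, and it agrees with $L(t)$ on $\Theta$.

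For the integrality claim I will argue by contradiction: suppose some probability measure $\mu_\alpha$ has Laplace transform equal to $L^\alpha$ on $\Theta$. By the first part, this Laplace transform extends analytically to a single-valued holomorphic function $\Lambda$ on $\Theta+i\RR$. Since $L$ is nonzero on $\Theta$ and analytic on the strip, its zeros are isolated; I pick an open disk $D\subset \Theta+i\RR$ around $z_0$ on which $z_0$ is the only zero of $L$, write $L(z)=(z-z_0)^m h(z)$ with $h$ analytic and nonvanishing on $D$, and fix a branch of $h^\alpha$ analytic on $D$.

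The core step is to show that, on a slit disk around $z_0$, one has $\Lambda(z)=c\,(z-z_0)^{m\alpha}h(z)^\alpha$ for some unit constant $c$. To justify this I will pick any $t_0\in\Theta$ and choose a simply connected neighborhood $V$ of $t_0$ in which $L$ is nonvanishing; on $V$, the quotient $\Lambda(z)/h(z)^\alpha$ is holomorphic and coincides with a local branch of $(z-z_0)^{m\alpha}$ by comparison of real-valued $\alpha$-powers on $V\cap\Theta$. I then analytically continue this identity from $V$ to a point of $D\setminus\{z_0\}$ along a path in the zero-free connected set $(\Theta+i\RR)\setminus Z(L)$, which exists because $Z(L)$ is a discrete subset of a connected open strip.

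The conclusion is then immediate: $\Lambda$ is analytic at $z_0$ by construction and $h(z_0)^\alpha\neq 0$, so the factor $(z-z_0)^{m\alpha}$ must extend single-valuedly and analytically to $z_0$, which forces $m\alpha$ to be a non-negative integer; since $m\geq 1$ and $\alpha>0$ we obtain $m\alpha\in\NN$. The main technical obstacle will be the branch bookkeeping in the third step, namely matching the \emph{globally} single-valued extension $\Lambda$ with the \emph{branch-dependent} local factorization $(z-z_0)^{m\alpha}h(z)^\alpha$; this is where one must be careful that the analytic continuation along a zero-avoiding path does in fact transport the real-axis identity to the slit neighborhood of $z_0$.
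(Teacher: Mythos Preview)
Your argument is sound. Note, however, that the paper does not supply a proof of this proposition: it is introduced with the words ``We recall the following facts about univariate Laplace transforms, which belong to the folklore'' and is stated without justification. So there is nothing to compare your approach against; you are filling in a result the authors take as known.

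Two small comments on the write-up. First, the sentence ``on $V$, the quotient $\Lambda(z)/h(z)^\alpha$ is holomorphic and coincides with a local branch of $(z-z_0)^{m\alpha}$ by comparison of real-valued $\alpha$-powers on $V\cap\Theta$'' is slightly tangled: since $L>0$ on $\Theta$ forces $z_0\notin\RR$, the factor $h(t)=L(t)/(t-z_0)^m$ is genuinely complex on $\Theta$, so the phrase ``real-valued $\alpha$-powers'' does not apply cleanly to the pieces $h$ and $(z-z_0)^m$ separately. It is cleaner to continue a branch of $\log L$ itself (real on $\Theta$) along your zero-avoiding path into $D\setminus\{z_0\}$, and only there decompose $\log L=m\log(z-z_0)+\log h$.

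Second, your concluding step is most transparent as a monodromy statement: $\Lambda/h^\alpha$ is single-valued and holomorphic on all of $D$ (because $D$ is simply connected and $h$ nonvanishing there), while on the slit it agrees with a branch of $(z-z_0)^{m\alpha}$; traversing a small loop around $z_0$ then forces $e^{2\pi i m\alpha}=1$, and boundedness at $z_0$ excludes negative integers. That is precisely your argument, only phrased without the intermediate appeal to ``extends analytically to $z_0$''.
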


\begin{proof}[Proof of Theorem \ref{T.U}]  %
Without loss of generality, we may assume that $\mX$ is non-degenerate, and therefore that it is standardized with mean $\mO$ and variance $\mI_n$. Therefore, \eqref{Meixner-Anshelevich} holds with some constants $a,b$, as  explained in the introduction, replacing $\mX$ by $-\mX$ if needed, without loss of generality we may assume $b\geq 0$.
 So the Laplace transform of $\mX$  must be given by one of the %
 five formulas from Proposition \ref{P6.1}(i-iii), Proposition \ref{Prop Poisson} or Proposition \ref{P.G}.
 Then Corollary \ref{Bin-Laplace}, and Definitions \ref{Def-Gamma} and \ref{P3.11} indicate the ranges of parameters where the five formulas are indeed Laplace transforms of probability measures on $\HH_{n,\beta}$. 
  To end the proof we have  to show that these functions cannot be the Laplace transforms of probability measures when  the parameters fail to satisfy the conditions listed in Corollary \ref{Bin-Laplace}, and Definitions \ref{Def-Gamma} and \ref{P3.11}.
  Since the solutions of the PDEs and our definitions of Meixner ensembles depend  on the constraints satisfied by  parameters $a,b$,  we will consider each case separately.

In the proof, we repeatedly compute $L(\theta)$ on diagonal matrices
$$\theta^\pm_t=\left[\begin{matrix}
{t}/2&0 \\
0& \pm t/2
\end{matrix}\right].
$$

\begin{enumerate}
\item Case  $b^2>4a$ with $a<0$. %
In this case, all potential Laplace transforms for the standardized  ensembles are given by formula \eqref{Bin-answer}, and \eqref{Bin-function} shows that $L(\theta)$ is indeed the Laplace transform if $a=-1/(4N)$ and $C_1,C_2,C_3\geq 0$.
It remains to show that $L(\theta)$ is not a Laplace transform if $1/(4a)\not\in\NN$ or if one of the constants %
$C_j$ is negative.

Denote $\alpha=-1/(4a)>0$.
 If $C_3=0$, then $L(\frac{1}{\kappa}\theta_t^+)=(C_1e^t+C_2e^{-t})^\alpha$, and it is a classical fact  that $\alpha\in\NN$ and $C_1,C_2$ are the binomial probabilities,   compare Proposition \ref{P-Jorg-Bern}. Therefore, through the remainder of the proof we assume $C_3>0$.
We first show that $\alpha\in\NN$; the same  reasoning shows that the J\o rgensen set of a Bernoulli ensemble  on $\HH_{2,\beta}$ is $\{1,2,\dots\}$.
We write $L(\theta_t^-)=(f(s))^\alpha$, where
$$f(s)=1-C_3+C_3 \calI_{(\beta-1)/2}(s).$$
To show that $\alpha\in\NN$, we will apply Proposition \ref{P:LNN}.
We first observe that  \eqref{F} implies
\begin{equation}\label{***}
\calI_\nu(\sqrt{z})=\sum_{n=0}^\infty a_nz^n,
\end{equation}
where
$$a_n=\frac{1}{2^{2n}n!\Gamma(n+\nu+1)}.
$$
This shows that $\calI_\nu(\sqrt{z})$  is an entire function of order
$$
\rho=\limsup_{n\to\infty}\frac{\log n}{\frac1n\log a_n}=1/2.
$$
Therefore by a  refinement of
 the little Picard's theorem \cite[Theorem 9.11]{Markushevich:1977}, the equation $\calI_\nu(\sqrt{z})=(C_3-1)/C_3$ has an infinite number of roots.

 We also observe that
 \begin{equation}\label{IV}
  \frac{d}{dz}\calI_{\nu}(\sqrt{z})=\frac14 \calI_{\nu+1}(\sqrt{z}),
\end{equation}
 see \cite[page 479]{Watson:1944}. %

 Coming back to $f(s)$, we see that the function $s\mapsto f(s)$ has an infinite set $Z_0$ of roots. We claim that $Z_0$ has at least one simple root. If not, $Z_0$ would be included in the set $Z_1$ of roots of the derivative $f'(s)$.  Now from \eqref{IV} we see that $Z_1$ is the set of roots of $\calI_{(\beta+1)/2}$ which by Lommel's Theorem,
 lies on the imaginary axis (\cite[15.25]{Watson:1944}).  %
From \cite[page 199]{Watson:1944}  we see that
 $$
 \lim_{t\to \pm \infty}\calI_{\nu}(it)=0,
 $$
 therefore $\calI_\nu(it)$ cannot be equal to $(C_3-1)/C_3$ on an infinite subset of $Z_1$.
 Therefore $f$ has at least one simple zero, and from Proposition \ref{P:LNN} we deduce that $\alpha$ must be an integer.

The final step is to show that $C_1,C_2,C_3\geq 0$.
 Since the second derivative of $\log L(\frac{1}{\kappa}\theta_t^-)=\alpha \log(1-C_3+C_3\calI_{(\beta-1)/2}(t))$ is $\alpha C_3/(\beta+1)$, we see that $C_3\geq 0$.

Next we compute the second derivative of $\frac{1}{\alpha}\log L(\frac{1}{\kappa}\theta_t^+)$, which is
$$
\left(C_2 C_3+e^{2t }C_1 C_3+4C_1 C_2e^t\right)\frac{e^t}{\left(C_2+e^t \left(e^t  C_1+C_3\right)\right)^2} .
$$
Since $\alpha\in\NN$, the Laplace transform $L(\frac{1}{\kappa}\theta_t^+)$ is well defined for all real $t$.
 The sign of the second derivative for large $t$ is determined by $C_1C_3$, showing that $C_1\geq 0$.  The sign of the second derivative as $t\to-\infty$ is determined by $C_2C_3$, showing that $C_2\geq 0$. (Recall that we consider the case $C_3\ne0$ only.)

Thus Definition \ref{Def-Bin}  indeed covers all examples of Meixner ensembles on $\HH_{2,\beta}$ with $a<0$.

\item Case $b^2>0$,  $a=0$.   %
In this case, all potential Laplace transforms for the standardized  ensembles are given by formula \eqref{Poiss-answer}. %
 From \eqref{Poiss-function}, after passing to standardized $\nX$, from \eqref{Poiss-moments} we
  get \eqref{Poiss-answer} 
with $C=(\la_1+\la_2)/(\la_1+2\la_2)$ and $b=1/\sqrt{2\la_1+4\la_2}$. Since the Laplace transform of $-\nX$ is   \eqref{Poiss-answer}  with the same $C$ but $b=-1/\sqrt{2\la_1+4\la_2}$, we see that    \eqref{Poiss-answer}
  is indeed a Laplace transform when $1/2\leq C\leq 1$, $b> 0$.

To verify that $L(\theta)$   fails to be a Laplace transform of an ensemble in all other cases, we compute
$$\frac{d^2}{dt^2}\log L\left(\theta^-_t/b\right)\Big|_{t=0}=2C-1.$$
Thus we must have $C\geq 1/2$.
 Next, we note that in order for
 $$\frac{d^2}{dt^2}\log L\left(\theta^+_t/b\right) =e^t \left(4 e^t (1-C)+2
   C-1\right)$$
to be positive for large $t$, we must have $C\leq 1$.

This shows that Definition \ref{Def-Poiss} covers all examples of Meixner ensembles on $\HH_{2,\beta}$ with $b\ne 0$, $a=0$

\item Case $b^2>4a$, $a>0$. %
In this case, all potential Laplace transforms for the standardized  ensembles are given by formula \eqref{NB-cosh}.
To see which values of $\la$ correspond to the Laplace transform  \eqref{NB-function} of the negative binomial ensemble, we take $r=1/(4a)$ and define parameters $q_1,q_2$ and $p=1-q_1-q_2$ as follows
$$
\frac{2}{p}=1-\la +b/\kappa,\; q_2=(p-1) p\la,\; q_1=1-p-q_1.
$$
From the admissible range of  $p$ given in Remark \ref{Rem4Thm5.1} we see that
\eqref{NB-cosh} is a Laplace transform of an ensemble  when $1\leq 1-\la\leq b/\kappa$.

It remains to show that no other values of $\la$ are allowed.
The second derivative of
$$4a \log L(\theta^-_t/\kappa)=-\log(1-\la+\la\calI_{(\beta-1)/2}(t) ) $$ at $t=0$ is
 $- \la/(1+\beta)$, proving that $\la\leq 0$.

 Next suppose $1-\la>b/\kappa$.  Then, with $\rho=b/(\kappa(1-\la))<1$ we see that
 $$
 (1-\la)\cosh t-b/\kappa \sinh(t)=(1-\la)(\cosh t- \rho \sinh t)
 $$
 is an increasing function of $t>0$. So  $L(\theta_t^+/\kappa)$ is well defined for all $t>0$.
 We now compute the second derivative of
 \begin{equation}\label{L-NB***}
 4a\log L(\theta^+_t/\kappa)=-\log(\la+(1-\la)\cosh(t)-b/\kappa\sinh t).
\end{equation}
 We get
$$\frac{b^2-\kappa ^2 (\lambda -1)^2+\kappa  \lambda  (\kappa
   (\lambda -1) \cosh (t)+b \sinh (t))}{(-\kappa  \lambda +\kappa
   (\lambda -1) \cosh (t)+b \sinh (t))^2}\,.
   $$
 Since $\kappa(\la-1)+b<0$, this expression fails to be positive for large $t$, contradicting that $L$ is a Laplace transform of a probability measure.  Thus  we must have
 $1-\la\leq b/\kappa$.

This shows that Definition \ref{Def-NB} covers all examples of Meixner ensembles on $\HH_{2,\beta}$ with $a>0$, $b^2>4a$.

\item Case $a=0$, $b=0$ (Proposition \ref{P.G}).  It is clear that  Definition \ref{Def-Gauss} covers all examples of Meixner ensembles on $\HH_{2,\beta}$ with Laplace transform \eqref{Gauss-answer}.

\item Case $a>0$, $b^2=4a$ (Proposition \ref{P6.1}\eqref{i}).
From \cite{Letac-Wesolowski:2008} it follows that Definition \ref{Def-Gamma} covers all examples of Meixner ensembles on $\HH_{2,\beta}$ with $a>0$, $b^2=4a$.

\item %
Case $b^2<4a$ (Proposition \ref{P6.1}\eqref{iv}). The trivial Meixner case corresponds to $\la=0$. From 
\eqref{H-function}
we see that \eqref{H-ans} is a Laplace transform of a probability measure   also when $\la=1$, $b=0$.
    
It remains to show that \eqref{H-ans} fails to be a Laplace transform in the remaining cases.

Denote $\alpha=1/(4a)>0$. The second derivative of
$$\log L(\theta^-_t/\kappa)=-\alpha\log(1-\la+\la\calJ_{(\beta-1)/2}(t) ) $$ at $t=0$ is
 $\alpha \la/(1+\beta)$, proving that we must have $\la\geq 0$.

Next, for $\la\ne 1$ let  $\phi\in(-\pi/2,\pi/2)$ be such that $\sin \phi=\rho/\sqrt{(1-\la)^2+\rho^2}$.
Then the second derivative of
\begin{multline}\label{EHM-1}
\log L(\theta_t^+/\kappa)=-\alpha\log(\la+(1-\la)(\cos t-\rho/(1-\la)\sin t))\\=
\alpha\log\left(\la+(1-\la)\frac{\cos (t+\phi)}{\cos \phi} \right)
\end{multline}
 at
 $t=-\phi$ is
 $$\alpha\frac{1-\lambda }{(\frac{1}{\cos \phi}-1)
   \lambda +1}\,.
$$
Since we already know that $\la\geq 0$, this shows that we must have $\la\leq 1$.
Next we check that if $\la>0$ and  \eqref{H-ans} is a Laplace transform then $\la=1$ and $b=0$.

\begin{claim}\label{Claim-EHM}  If $\la<1$ then $\la \leq 1/2$.

\end{claim}
To prove this, we proceed by contradiction.  Suppose $1/2<\la<1$. From \eqref{EHM-1} we see that $\log L(\theta_t^+)$ is well defined for all $t>0$.
 Then the second derivative of $\log L(\theta_t^+)$ at $t=\pi-\phi$ is  
 $$\frac{4\alpha  (1-\lambda)}{1-(1+\cos \phi) \lambda }\leq \frac{4\alpha  (1-\lambda)}{1-2 \lambda }<0,$$
  giving a contradiction.
 
 \begin{claim}
   If $\la>0$ then $\la=1$.  
 \end{claim}
 To prove this, we again proceed by contradiction.
Suppose $0<\la<1$. Then 
 $$\log L(\theta^-_t/\kappa)=\frac{1}{\left(1-\la+\la \mathcal{J}_{(\beta-1)/2}(t)\right)^\alpha}$$
From the integral representation of the Bessel function \cite[\S3.3]{Watson:1944} we read out that $|\mathcal{J}_\nu(t)|\leq \mathcal{J}_\nu(0)=1$ for all real $t$. Since  from Claim \ref{Claim-EHM} we have $\la\leq 1/2$,  we see that all real arguments $t>0$ are allowed into $\log L(\theta^-_t/\kappa)$.
 Computing the second derivative of $\log L(\theta^-_t/\kappa)$, we get
$$
\frac{\la \alpha \left(\la (\calJ_{(\beta-1)/2}'(t))^2-(1-\la)\calJ_{(\beta-1)/2}''(t) -\la  \calJ_{(\beta-1)/2}(t)\calJ_{(\beta-1)/2}''(t) \right) }{(1-\la+\la \calJ_{(\beta-1)/2}(t))^2}
$$
The sign of this expression is determined by the sign of $\la (\calJ_{(\beta-1)/2}'(\ell_\beta))^2-(1-\la)\calJ_{(\beta-1)/2} ''(t) -\la  \calJ_{(\beta-1)/2}(t)\mathcal{J} ''(t)$. When $\beta=1,2,4$, this expression cannot be positive for $\la\leq 1/2$. To see this, we use Mathematica to evaluate it at a fixed point $t\in(3,4)$, say $t=\pi$.
This contradiction proves that $\la=1$.

 It remains to show that  \eqref{H-ans} is not a Laplace transform of a probability measure when $\la=1$ and $b\ne0$.  In this case, consider
$$
g(s,t)=\log L(\frac{1}{\kappa}(\theta^+_s+\theta^-_t))=-bs/(4a)-\alpha \log\left(\calJ_{(\beta-1)/2}(t)-b \sin (s)/\kappa\right).
$$
Then $g(s,t)$ is well defined on $(s,t)\in[0,\pi)\times(-\ell_\beta,\ell_\beta)$, where
 $\ell_\beta$  is the first positive zero of $ \calJ_{(\beta-1)/2}(x)$ {\em i.e.}, $\ell_1\approx 2.40483$, $\ell_2=\pi\approx 3.14159$, and $\ell_4\approx 4.49341$.

 We calculate the Hessian
$$
H(s,t)=\det \left[\begin{matrix}
\frac{\partial ^2}{\partial s^2}g(s,t) & \frac{\partial ^2}{\partial s \partial t} g(s,t) \\
\frac{\partial ^2}{\partial s \partial t} g(s,t)& \frac{\partial ^2}{\partial t^2}g(s,t)
\end{matrix}
\right]$$
at $s=0$ and we get
$$ H(0,t)=-\frac{b^2 \alpha ^2
   \calJ_{(\beta-1)/2}''(t)}{\kappa ^2
  ( \calJ_{(\beta-1)/2}(t))^3}.$$
Using  Mathematica, we verify that $ \calJ_{(\beta-1)/2}''(0) \calJ_{(\beta-1)/2}''(\ell_\beta)<0$, so for $b\ne 0$ the Hessian must change sign over the domain $(s,t)\in [0,\pi)\times(-\ell_\beta,\ell_\beta)$. Thus $\log L(\theta)$ cannot be a convex function, {\em i.e.},
when $\la=1$, $b\ne0$, function $L(\theta)$ cannot be  a Laplace transform of a probability measure for any $\alpha\ne 0$.

This shows that the only non-trivial Meixner ensemble $\HH_{2,\beta}$ is the exceptional hyperbolic ensemble from Definition \ref{P3.11}.

\end{enumerate}

\end{proof}

\section{Additional observations}\label{Sect_Add_Obs}
\subsection{Independence of $\mS$ and $\mS^{-1}\mX^2\mS^{-1}$}\label{Sect: Independence}
Bo\.zejko and Bryc \cite[Remark 5.8]{Bozejko-Bryc-04} raise the
question whether there exists a non-trivial law on positive
$n\times n$ matrices such that \begin{equation}   \label{Z,S}
\mZ:=\mS^{-1}\mX^2\mS^{-1}, \; \mS:=\mX+\mY
\end{equation} are independent when $\mX$ and $\mY$ are  i.i.d. matrices
with this law. Such laws could provide matrix models for the "free gamma" law.

Here we answer this question in negative, at least for  the laws on positive $2\times 2$ matrices which are invariant under orthogonal/unitary/symplectic group.
We show that in the case of $2\times 2$ positive random matrices,
$\mS^{-1}\mX^2\mS^{-1}$ and $\mS$ are independent only if  $\mX$ arises as a gamma random variable multiplied by $\mI_2$.

\begin{proposition}\label{C2.3} If $\mX,\mY\in\HH_{2,\beta}$ are  i.i.d.  square-integrable non-degenerate positive
random matrices  such that $\mS=\mX+\mY$ and  $\mS^{-1}\mX^2\mS^{-1}$ are
independent and rotation invariant, then $\mX=\xi \mI_2$ and $\xi$ has a univariate gamma law.
\end{proposition}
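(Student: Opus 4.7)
My plan has three parts: first, reduce to the Meixner identity with $B=C=0$ using the independence hypothesis; second, apply Theorem~\ref{T.U} to pin the Laplace transform of $\mX$ to the gamma family; and third, rule out the non-trivial gamma ensembles by a fourth-moment argument. For the first part, rotation invariance of $\mZ := \mS^{-1}\mX^2\mS^{-1}$ gives $\E(\mZ) = c\mI_2$ with $c > 0$ by positivity of $\mX$, and independence sharpens this to $\E(\mZ \mid \mS) = c\mI_2$; multiplying by $\mS$ on each side gives $\E(\mX^2 \mid \mS) = c\mS^2$. The $\mX \leftrightarrow \mY$ symmetry together with the identity $(\mX-\mY)^2 = 2(\mX^2+\mY^2) - \mS^2$ then yields the Meixner identity \eqref{pre-ME} with parameters $(A, B, C) = (4c - 1, 0, 0)$, where $A \in (-1, 1)\setminus\{0\}$ by non-degeneracy (Remark after Proposition~\ref{P1.2}) and positivity.

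For the second part, I apply Theorem~\ref{T.U}. Under an affine shift $\widetilde\mX = \alpha\mX + t\mI_2$ with $\alpha > 0$ enforcing $\widetilde B = 0$, the new value of the third parameter is $\widetilde C = \alpha^2(C - B^2/(4A))$ whenever $A \ne 0$. A direct check against the $(A, B, C)$-triples in Propositions~\ref{P.B0}, \ref{P.P0}, \ref{P.NB0} and \ref{H-Laplace} (and the Gaussian $(A, B, C) = (0, 0, 2)$ from the discussion preceding Definition~\ref{Def-Gauss}) shows that the identity $C = B^2/(4A)$ fails in cases (1)--(4) and (6) of Theorem~\ref{T.U}, while case (5), the gamma family, already satisfies $B = C = 0$. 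Hence $\mX$ lies in the gamma family, and Proposition~\ref{P6.1}(i) pins its Laplace transform to the one-parameter family \eqref{f ans}: the parameter value zero there corresponds to the trivial ensemble $\mX = \xi\mI_2$ with $\xi$ gamma, while any nonzero value produces a non-trivial gamma ensemble from Definition~\ref{Def-Gamma}.

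For the third part, I rule out the non-trivial case. Independence of $\mZ$ from $\mS$ forces all conditional moments $\E(\mZ^k \mid \mS)$ to be constant in $\mS$; beyond $\E(\mZ \mid \mS) = c\mI_2$, the next useful constraint is $\E(\mZ^2 \mid \mS) = \E(\mZ^2) = c'\mI_2$ by rotation invariance, equivalently
\begin{equation*}
\E\bigl(\mX^2 \mS^{-2} \mX^2 \mid \mS\bigr) = c'\mS^2 \qquad\text{for some } c' > 0.
\end{equation*}
The plan is to evaluate this fourth conditional moment on a non-trivial gamma ensemble using the Lorentz-cone representation \eqref{X-Gamma1}--\eqref{X-Gamma} and the Laplace transform \eqref{Gindikin}, and to check that at a diagonal $\mS$ with distinct eigenvalues $s_1, s_2 > 0$ the diagonal entries of the left-hand side fail to sit in the ratio $s_1^2 : s_2^2$ demanded by the right-hand side. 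The main technical obstacle is carrying out this explicit moment computation on the Lorentz cone, which requires careful bookkeeping of the joint moments of $(\xi_0, \xi_1, \dots, \xi_{\beta+1})$ under $\nu_{p,c}$; if the second conditional moment turns out to be consistent, iterate with $\E(\mZ^k \mid \mS)$ for higher $k$ until a contradiction emerges. Once the non-trivial case is excluded, $\mX = \xi\mI_2$ and the hypothesis reduces to the independence of $\xi/(\xi+\eta)$ from $\xi + \eta$, which by the classical Lukacs theorem \cite{Kagan-Linnik-Rao:1973} characterizes $\xi$ as gamma-distributed.
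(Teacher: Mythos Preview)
Your first two parts are essentially correct and track the paper's argument: rotation invariance plus independence gives $\E(\mX^2\mid\mS)=c\,\mS^2$, hence \eqref{pre-ME} with $B=C=0$, and standardizing puts you in the parabolic case $b^2=4a>0$, so by Theorem~\ref{T.U}(v) the Laplace transform lies in the one-parameter family \eqref{f ans}. (Your detour through the affine invariant $C-B^2/(4A)$ is correct but unnecessary: the paper simply computes $a,b$ from \eqref{ABC2ab} and reads off $b^2=4a$ directly.)

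The genuine gap is your third part. You propose to rule out the non-trivial gamma ensembles by evaluating $\E(\mZ^2\mid\mS)=\E(\mX^2\mS^{-2}\mX^2\mid\mS)$ explicitly, but this requires the \emph{conditional} law of $\mX$ given $\mS$, which is not supplied by the Laplace transform \eqref{Gindikin} of $\mX$ alone; the Lorentz-cone representation gives you marginal moments of $\mX$, not conditional ones. Nor does this reduce to differentiating Laplace transforms, since $\mS^{-2}$ is not polynomial in $\mS$. Your fallback (``iterate with higher $k$ until a contradiction emerges'') is not a proof.

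The paper sidesteps this entirely by passing to a \emph{scalar} consequence of the independence: since $\det\mZ=(\det\mX/\det\mS)^2$, independence of $\mZ$ and $\mS$ forces $\det\mX/\det\mS$ to be independent of $\det\mS$. This is now a condition that \emph{can} be written as an identity for the Laplace transform, namely
\[
L(\theta)\,\Bigl(\tfrac{\partial^2}{\partial\theta_1\partial\theta_2}-\tfrac14\tfrac{\partial^2}{\partial\theta_{12}^2}\Bigr)L(\theta)
= c_0\,\Bigl(\tfrac{\partial^2}{\partial\theta_1\partial\theta_2}-\tfrac14\tfrac{\partial^2}{\partial\theta_{12}^2}\Bigr)L^2(\theta),
\]
and plugging in the known quadratic form \eqref{L-quadr} yields an algebraic system in the coefficients (Lemma~\ref{P2.3}). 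The only non-degenerate solutions are $\xi\mI_2$ with $\xi$ gamma, or a Wishart law; the latter is then excluded because Wishart satisfies \eqref{WW}, not \eqref{pre-ME}. This is the missing idea: replace the intractable matrix conditional moment by a scalar independence that translates into a closed-form PDE-type constraint on $L$.
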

\begin{lemma}\label{P2.3} Suppose $\mX,\mY\in\HH_{2,\beta}$ are positive  i.i.d.  random
matrices with the Laplace transform
\begin{equation}\label{L-quadr}
L(\theta)=%
(1+A\tr \theta+B\tr^2\theta+C\tr\theta^2)^{-p}.
\end{equation}

Let $\mS=\mX+\mY$. If real random variables $\det(\mX)/\det(\mS)$ and $\det \mS$
are independent, then one of the following cases occurs:
\begin{itemize}
\item[(i)]
 $A=B=C=0$. (Then $\mX=\mI$.)
\item[(ii)] $A=\pm 2 \sqrt{B}$, $C=0$. (Then $\mX=\frac{A}{2} \xi \mI$ where $\xi$ is univariate gamma with shape parameter $2p$.)
\item[(iii)] $A=\pm \sqrt{2B}$, $C=-B$. (Then
$\mX=A\mY$ where $\mY$ has a Wishart law with shape parameter $p$, {\em i.e.}, with the Laplace transform $L(\theta)=(1-  \tr(\theta) + \det \theta)^{-p}=\det(\mI-\theta)^{-p}$.)

\end{itemize}
\end{lemma}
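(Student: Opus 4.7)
The strategy is to compute low-order moments of $\det\mX$ and $\det\mS$ directly from the Laplace transform, then exploit the independence $\det\mX/\det\mS\perp\det\mS$ to derive polynomial constraints on $(A,B,C)$ that isolate the three cases.

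First I would introduce a Pauli-type orthonormal basis $(\mI,\sigma_1,\dots,\sigma_{\beta+1})$ of $\HH_{2,\beta}$ (traceless involutions that mutually anticommute) and write $\mX=x_0\mI+\sum_{j=1}^{\beta+1}x_j\sigma_j$ with $x=(x_0,x')\in\RR^{\beta+2}$. Then $\tr\mX=2x_0$, $\tr\mX^2=2(x_0^2+|x'|^2)$, and crucially $\det\mX=x_0^2-|x'|^2$, so the Laplace transform of $x$ becomes the rotation-invariant expression
\begin{equation*}
\tilde L(s_0,s')=\bigl(1+As_0+B's_0^2+C'|s'|^2\bigr)^{-p},\qquad B'=B+C/2,\ C'=C/2.
\end{equation*}
Averaging $e^{s'\cdot x'}$ on the sphere $S^\beta$ via $\int_{S^\beta}e^{r\omega\cdot s'}\,d\sigma(\omega)=\calI_{(\beta-1)/2}(r|s'|)$ and matching coefficients of $|s'|^{2n}$ against the binomial expansion of $\tilde L$ in $|s'|^2$ produces the closed form
\begin{equation*}
\E\bigl[e^{s_0x_0}|x'|^{2n}\bigr]=(-2C)^n(p)_n\bigl(\tfrac{\beta+1}{2}\bigr)_n\,\bigl(1+As_0+B's_0^2\bigr)^{-p-n},
\end{equation*}
which realises every joint moment $\gamma_{j,n}:=\E[x_0^j|x'|^{2n}]$ as an explicit polynomial in $A,B,C$ (with $p,\beta$ as parameters).

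Setting $d_X=x_0^2-|x'|^2$ and $d_S=d_X+d_Y+2Z$ with $Z:=x_0y_0-x'\cdot y'$, and using independence of $\mX,\mY$ and the isotropy of $x'$ (so that $\E[Z]=\E[x_0]^2$, $\E[Zd_X]=\E[x_0d_X]\E[x_0]$, $\E[Z^2]=\E[x_0^2]^2+(\beta+1)\E[x_1^2]^2$, and similarly for the higher mixed moments), I would express every $\E[d_X^j d_S^k]$ with $j+k\le 3$ as an explicit polynomial in $A,B,C$. Independence of $d_X/d_S$ and $d_S$ forces each ratio $\E[d_X^jd_S^{k-j}]/\E[d_S^k]$ to be constant in $k$; in particular comparing $(j,k)=(1,1)$ with $(1,2)$, and $(2,2)$ with $(2,3)$, gives the two polynomial identities
\begin{equation*}
\E[d_X]\E[d_S^2]=\E[d_Xd_S]\E[d_S], \qquad \E[d_X^2]\E[d_S^3]=\E[d_X^2d_S]\E[d_S^2].
\end{equation*}
I expect the first identity to reduce, modulo a nonvanishing cofactor, to $A^2=4B+2C$; substituting this into the second should then factor as $C(B+C)\cdot(\mbox{nonvanishing})=0$. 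Together these two conditions cut out exactly the three alternatives $\{A=B=C=0\}$, $\{C=0,\,A^2=4B\}$, $\{B+C=0,\,A^2=2B\}$.

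Once $(A,B,C)$ is confined to one of the three alternatives, the Laplace transform is recognised at sight. In (i) $L\equiv 1$, so $\mX$ is degenerate. In (ii), $L$ collapses to $(1+(A/2)\tr\theta)^{-2p}$, the Laplace transform of $(A/2)\xi\mI_2$ with $\xi$ a univariate gamma of shape $2p$. In (iii), the quadratic part reassembles as $B\tr^2\theta+C\tr\theta^2=A^2\det\theta$, whence $L(\theta)=\det(\mI+A\theta)^{-p}$, the Laplace transform of a scalar multiple of a $2\times 2$ Wishart of shape $p$. The main obstacle is the algebraic bookkeeping: even with the clean closed form for $\gamma_{j,n}$, the moment $\E[d_S^3]$ involves sums of products of $\gamma_{j,n}$ with $(j,n)$ reaching $(6,3)$, and verifying the predicted factorisation of the second identity into $C(B+C)$ times a positive cofactor is the delicate step. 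A useful simplification is to first use the first identity to eliminate $A^2$ in terms of $(B,C)$, reducing the second to a polynomial in $B,C$ alone whose factorisation should then be transparent.
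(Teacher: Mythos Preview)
Your approach differs substantially from the paper's. Rather than computing individual moments, the paper observes that for $2\times 2$ matrices $\det$ is realised on the Laplace transform by the second-order operator $D=\partial_{\theta_1}\partial_{\theta_2}-\tfrac14\partial_{\theta_{12}}^2$ (in coordinates $\theta=\left[\begin{smallmatrix}\theta_1&\theta_{12}\\\theta_{12}&\theta_2\end{smallmatrix}\right]$), so that the independence hypothesis yields the functional identity $L(\theta)\cdot DL(\theta)=c_0\,D(L^2)(\theta)$ with $c_0=\E[\det\mX/\det\mS]$. Writing $L=f^{-p}$ with $f$ quadratic turns this into a polynomial identity in $\theta$; matching coefficients gives a small algebraic system in $(A,B,C)$ together with an auxiliary constant $c_1$, which is then solved directly. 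One value of $c_0$ is degenerate and produces a spurious branch $C=2B$, which the paper eliminates by invoking \cite{Letac-Wesolowski:2008} to show the corresponding $L$ is not a Laplace transform of a probability measure. The differential-operator device thus packages all the relevant moment relations simultaneously, whereas your route extracts two of them by hand at the cost of sixth-order moment bookkeeping.

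Your plan is reasonable but leaves the decisive steps as conjectures. The two factorisations you need---that the first identity reduces to $A^2-4B-2C=0$ and the second to $C(B+C)=0$, each times a nonvanishing cofactor---are only stated as expectations, and you yourself flag the second (which requires $\E[d_S^3]$) as the delicate step without carrying it out. More importantly, you should anticipate residual branches of exactly the kind the paper encountered: your ``nonvanishing'' cofactors may in fact vanish on lower-dimensional strata or for special $p$, and eliminating such branches will require either further moment identities or a separate argument that the corresponding $L$ fails to be a Laplace transform. Two scalar relations among three unknowns cannot be counted on to cut out precisely the target locus without that additional check.
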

\begin{proof}%
The independence of determinants implies
$$
\E\left( \det \mX \exp \tr(\theta \mS)\right)= c_0 \E \left( \det \mS \exp \tr(\theta \mS)\right) ,
$$
where $c_0=\E \left(\frac{\det \mX}{\det \mS}\right) >0$.
With $\theta=\left[\begin{matrix}\theta_1&\theta_{12}\\\theta_{12}&\theta_2
\end{matrix}\right]$, this becomes
$$
L(\theta) \left(\frac{\partial^2}{\partial \theta_1 \partial \theta_2}-\frac{\partial^2}{4\partial \theta_{12}^2}\right)L(\theta)=c_0 \left(\frac{\partial^2}{\partial \theta_1 \partial \theta_2}-\frac{\partial^2}{4\partial
\theta_{12}^2}\right)L^2(\theta).
$$
Writing  \eqref{L-quadr} as $L(\theta)=1/(f(\theta))^{p}$,  when $c_0\ne  \frac{p+1}{2(2p+1)}$ we get
$$
\frac{\partial f}{\partial \theta_1 }\frac{\partial f}{\partial \theta_2}-\frac1{4}\left(\frac{\partial f }{\partial \theta_{12}}\right)^2\ =c_1 f(\theta)\left(\frac{\partial^2 f}{\partial \theta_1 \partial \theta_2}-\frac14\frac{\partial^2
f}{\partial \theta_{12}^2}\right).$$
where
$$
c_1=\frac{2c_0-1}{2(2p+1)c_0-p-1}\;.
$$
Solving the resulting equations for $A,B,C,c_1$ we verify that the only non-zero solutions correspond either  to
$c_1=2$, $A=\pm 2\sqrt{B}$,  $C=0$ with $f(\theta)=(1+A \tr(\theta)/2)^2$, or to
$c_1=2/3$ with $A=\pm \sqrt{2B}$, $C=-B$.

When  $c_0= \frac{p+1}{2(2p+1)}$, the equations give $C=2B$ so
$f(\theta)= 1+A\tr\theta+B(\tr^2\theta+2\tr\theta^2)$.
Using  \cite[Theorem 3.1]{Letac-Wesolowski:2008} one can verify that in this case $L(\theta)$ is not  a Laplace transform.
\end{proof}
\begin{proof}[Proof of Proposition \ref{C2.3}]
Denote $\mZ=\mS^{-1}\mX^2\mS^{-1}$. Note that this is a positive-define matrix.
First we note that for positive matrices the Laplace transform is defined on $\theta\in\HH$ when
$-\theta$ is positive.

 Next we note that rotation invariance of the law of $\mX$ implies that $\E (\mX)=\alpha\mI$ with $\alpha>0$,  %
 $\E (\mX^2)=(\alpha^2+\beta^2)\mI$ with $\beta^2>0$, and
 $\E (\mZ)=c_0\mI$. Therefore,
$$\E (\mX^2|\mS)=\E (\mS \mZ \mS|\mS)=c_0 \mS^2.
$$
Taking the expected value of both sides, $c_0=\frac{\beta^2}{2(\beta^2+2\alpha^2)}$.
We also note that by exchangeability, $\E (\mX^2|\mS)=\E (\mY^2|\mS)$.
So
$$\E \big((\mX-\mY)^2\big|\mS\big)= \E (2\mX^2+2\mY^2-\mS^2|\mS) =(4c_0-1)\mS^2.$$

Passing to standardized matrices $\mX'=(\mX-\alpha\mI)/\beta$, we see that \eqref{Meixner-Anshelevich} holds with $b^2=4a=\alpha^2/\beta^2$.
Thus by Proposition \ref{P6.1}(i),
$$\E \left(\exp \tr(\theta\mX)\right)=e^{\alpha\tr\theta}\E \left(\exp \tr(\beta \theta\mX')\right) $$ and
equation \eqref{L-quadr} holds with $p=\beta^2/\alpha^2>1$.

Since $\det(\mX)/\det(\mS)=\sqrt{\det \mZ}$ is independent of $\det \mS$,   by Lemma \ref{P2.3}, there are only two non-degenerate  choices for the law of $\mX$.
It is known that  \eqref{Meixner-Anshelevich} does not hold for the Wishart law, see \cite[pg. 582]{Letac-Massam-98}, which
excludes the law from Lemma \ref{P2.3}(iii). Thus $\mX$ is the identity matrix multiplied by a Gamma random variable.
\end{proof}

\subsection{Some series solutions of the general system of PDEs}\label{Sect:Series_Sols}

 The Laplace transform of  Bernoulli ensembles can be computed readily. For example, if  $\mP$ is the orthogonal projection onto a randomly rotated line through the origin in $\CC^3$ then from \cite{Letac:2007} we have
 for %
  $\theta=Diag[\theta_1,\theta_2,\theta_3]$,
\begin{equation}\label{P-theta}
\E\exp\langle\theta|\mP\rangle=2\frac{e^{\theta _1}-e^{\theta  _2}}{\left(\theta  _1-\theta
_2\right) \left(\theta  _2-\theta
  _3\right)}+2\frac{e^{\theta  _1}-e^{\theta  _3}}{\left(\theta
_1-\theta  _3\right) \left(\theta
  _3-\theta  _2\right)}.
\end{equation}
However, it does not seem to be obvious how to express this Laplace transform in terms of the elementary symmetric functions
$\sigma_1=\theta_1+\theta_2+\theta_3, \; \sigma_2=\theta_1\theta_2+\theta_1\theta_3+\theta_2\theta_3, \;\sigma_3=\theta_1\theta_2\theta_3$. The following result gives the expansion for rank 1 projections in $\HH_{n,\beta}$ for all $n\geq 2$.    When combined  with properties of Meixner ensembles from  Section \ref{Sect:Meixner_Ensemble}, this can be used to produce  some  solutions of the system of PDEs, see e.g., Corollary \ref{C-P-P1}.

 Recall the Pochhammer symbol
 \begin{equation}\label{Pochhamer}
 (b)_k=\frac{\Gamma(b+k)}{\Gamma(b)}=b(b+1)\dots(b+k-1)\,.
\end{equation}

\begin{proposition}\label{P-P1} If $\mP_1$ is an random projection of ${\HH_{n,\beta}}$  invariant by rotation with trace 1, then $\E e^{\langle\theta|\mP_1\rangle}=L_n(\theta)$ has series expansion
$$
L_n(\theta)=\sum_{k=0}^\infty
\frac{(-1)^k}{\left(\frac{n\beta}{2}\right)_k}\sum_{\nu_1+2\nu_2+3\nu_3+\dots=k}
\frac{(-1)^{\nu_1+\nu_2+\dots}\left(\frac{n\beta}{2}\right)_{\nu_1+\nu_2+\dots}}{\nu_1!\nu_2!\nu_3!\dots} \sigma_1^{\nu_1}(\theta)\sigma_2^{\nu_2}(\theta)\dots
$$
\end{proposition}

 Since $\mP_2=\mI_3-\mP_1$ for $n=3$, we have the following.
 \begin{corollary}\label{C-P-P1}
 The Laplace transform of the Bernoulli ensemble on $\HH_{3,\beta}$ with parameters $q_1,q_2,q_3$ is
  $$
L(\theta)=q_0+q_1L_3(\theta)
+q_2e^{\sigma_1}L_3(-\theta)+q_3 e^{\sigma_1}\,.
$$

\end{corollary}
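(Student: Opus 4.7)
The plan is to reduce the corollary directly to Proposition \ref{P-P1} via the mixture decomposition of the Bernoulli ensemble together with the complementary projection identity $\mP_{2}\stackrel{d}{=}\mI_{3}-\mP_{1}$, which is special to dimension $n=3$. I expect no real obstacle here: all the analytic content already sits inside Proposition \ref{P-P1}, and what remains is linearity of expectation plus a short Haar-invariance argument.

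First, by the very definition of a Bernoulli ensemble on $\HH_{3,\beta}$, one has $\mu=q_{0}\delta_{\mO}+q_{1}\mu_{1}+q_{2}\mu_{2}+q_{3}\delta_{\mI_{3}}$, so its Laplace transform is the convex combination
$$
L(\theta)=q_{0}+q_{1}L_{\mu_{1}}(\theta)+q_{2}L_{\mu_{2}}(\theta)+q_{3}e^{\sigma_{1}(\theta)},
$$
using $\langle\theta|\mI_{3}\rangle=\tr(\theta)=\sigma_{1}(\theta)$. The rank-one contribution is immediate: Proposition \ref{P-P1} applied with $n=3$ gives $L_{\mu_{1}}(\theta)=L_{3}(\theta)$.

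Second, I will rewrite the rank-two contribution through the complementation identity. Writing $\mP_{2}=\mU\mI_{2,3}\mU^{*}$ with $\mU$ Haar-distributed on $\mathcal{K}_{3,\beta}$, one has $\mI_{3}-\mP_{2}=\mU\,\mathrm{diag}(0,0,1)\,\mU^{*}$; since $\mathrm{diag}(0,0,1)$ is conjugate to $\mI_{1,3}=\mathrm{diag}(1,0,0)$ by a permutation matrix, which belongs to $\mathcal{K}_{3,\beta}$ for every $\beta\in\{1,2,4\}$, this permutation is absorbed into $\mU$ by bi-invariance of Haar measure, yielding $\mI_{3}-\mP_{2}\stackrel{d}{=}\mP_{1}$. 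Consequently,
$$
L_{\mu_{2}}(\theta)=\E\exp\langle\theta|\mI_{3}-\mP_{1}\rangle=e^{\sigma_{1}(\theta)}\,\E\exp\langle-\theta|\mP_{1}\rangle=e^{\sigma_{1}(\theta)}\,L_{3}(-\theta).
$$
Assembling the four pieces gives the asserted formula. The only substantive input is Proposition \ref{P-P1}; the complementation step is a one-line Haar-invariance argument, and the remainder is linearity of expectation, so there is no genuine difficulty to overcome.
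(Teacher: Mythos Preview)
Your proof is correct and follows exactly the paper's approach: the paper simply states ``Since $\mP_2=\mI_3-\mP_1$ for $n=3$'' as the one-line justification for the corollary, and you have spelled out this distributional identity via the Haar-invariance/permutation argument together with the mixture decomposition. There is nothing to add.
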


For the proof of the proposition we use facts about Jack polynomials (spherical polynomials) which require additional notation.
 \subsubsection{Partitions}
 A partition $\kappa=(k_1,k_2,\dots,k_r)$ of $k$ into at most $r$ parts is the sequence
 $k_1\geq k_2\geq \dots \geq k_r\geq 0$ such that $k_1+k_2+\dots+k_r=k$. We write $\kappa\vdash k$.  Other standard notation is $|\kappa|=k_1+k_2+\dots$ (which is just $k$). We denote the length (the number of parts) of a partition $\kappa$ by $\ell(\kappa)=\#\{j: k_j>0\}$.

 \subsubsection{Pochhammer symbols}
For a partition $\la\vdash k$, we  need the $\beta$-Pochhammer symbol:
 \begin{equation}\label{Pochhamer2}
 (b)_{\lambda;\beta}=\prod_{j=1}^{\ell(\la)}\left(b-\frac{\beta}{2}(j-1)\right)_{\la_j}\,.
\end{equation}
\subsubsection{Jack polynomials}
 For a not necessarily symmetric  $n\times n$ matrix $\theta$ with eigenvalues $
 \theta_1,\theta_2,\dots,\theta_n$, the Jack polynomials $\{\JJJJ_\kappa(\theta;\beta)\}$ are the symmetric polynomials in $
 \theta_1,\theta_2,\dots,\theta_n$, indexed by the partitions $\kappa$ of $k=|\kappa|$ into  $r=\ell(\kappa)\leq n$ parts.   In  \cite[formula (1)]{Jack:2006a} the Jack polynomial $\JJJJ_\kappa(\theta;\beta)$ is defined
 as  the coefficient at $t_1t_2\dots t_r$ in the expansion of $$
(2/\beta)^r\det(I_n-t_{1} \theta^{k_1}-t_{2}\theta^{k_2}-\dots-t_r\theta^{k_r})^{-\beta/2} \,. $$
 It is convenient to set $ \JJJJ_\kappa(\theta;\beta)=0$   if the partition $\kappa$ has more than $n$ parts.
 (The usual notation for $ \JJJJ_\kappa(x;\beta)$ is $C_\kappa(x;\alpha)$ with $\alpha=2/\beta$.)

We need several properties of Jack polynomials.
\begin{theoremA}
\begin{enumerate}
\item If $\ell(\kappa)\leq n$ and $A,B$ are $n\times n$ matrices, then
\begin{equation}\label{J-int}
\int_{\mathcal{K}_{n,\beta}}  \JJJJ_{\kappa}(UAU^*B;\beta)dU=\frac{ \JJJJ_{\kappa}(A;\beta) \JJJJ_{\kappa}(B;\beta)}{ \JJJJ_{\kappa}(\mI_n;\beta)}\,,
\end{equation}
where the integral is over the normalized Haar measure on the orthogonal group $O_n$ ($\beta=1$), on the unitary group $U_n$ ($\beta=2$), or on the symplectic  group $K_n$  ($\beta=4$).
\item  If $A$ is an $n\times n$ matrix and $k\geq 0$, then
\begin{equation}\label{J-tr}
(\Re \tr(A))^k=\sum_{\kappa\vdash k} \JJJJ_\kappa(A;\beta).
\end{equation}

\item If $\ell(\la)\leq \min\{m,n\}$, then
\begin{equation}\label{J(I)}
\frac{\JJJJ_{\la}(\mI_{m,n};\beta)}{ \JJJJ_{\la}(\mI_n;\beta)}=\frac{\left(\frac{m\beta}{2}\right)_{\la;\beta}}{\left(\frac{n\beta}{2}\right)_{\la;\beta}}\,.
\end{equation}
(If $\ell(\la)>m$ then $ \JJJJ_{\la}(\mI_{m,n};\beta)=0$.)
\item For a one-part partition $\la=(k)\vdash k$, we have
\begin{equation}\label{J2e}
 \JJJJ_{(k)}(A;\beta)=  \frac{(-1)^k k!}{\left(\frac{\beta}{2}\right)_k}\sum_{\nu_1+2\nu_2+3\nu_3+\dots=k}\frac{(-1)^{\nu_1+\nu_2+\dots}(\frac{\beta}{2})_{\nu_1+\nu_2+\dots}}{\nu_1!\nu_2!\dots}\sigma_1^{\nu_1}\sigma_2^{\nu_2}\dots
\end{equation}
\end{enumerate}

\end{theoremA}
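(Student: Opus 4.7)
\medskip

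\noindent\textbf{Proof plan for Proposition \ref{P-P1}.}

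The plan is to expand $L_n(\theta)=\EE\,e^{\langle\theta|\mP_1\rangle}$ as a power series in the trace, decompose each power of the trace into a sum of Jack polynomials via \eqref{J-tr}, integrate each Jack polynomial against the Haar measure that produces $\mP_1$ using \eqref{J-int}, observe that only one-part partitions survive by \eqref{J(I)} with $m=1$, and finally convert the resulting Jack polynomials into elementary symmetric functions using \eqref{J2e}.

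Concretely, I would first write
\begin{equation*}
L_n(\theta)=\sum_{k=0}^{\infty}\frac{1}{k!}\,\EE\bigl(\Re\tr(\theta\mP_1)\bigr)^k,
\end{equation*}
and apply \eqref{J-tr} with $A$ replaced by $\theta\mP_1$ to rewrite the integrand as $\sum_{\kappa\vdash k}\JJJJ_\kappa(\theta\mP_1;\beta)$. Next I would realize $\mP_1$ as $U\mI_{1,n}U^*$ with $U$ Haar distributed on $\mathcal{K}_{n,\beta}$. Since Jack polynomials depend only on the eigenvalues of their argument, and since $\theta U\mI_{1,n}U^*$ and $U\mI_{1,n}U^*\theta$ share the same spectrum, \eqref{J-int} applied with $A=\mI_{1,n}$ and $B=\theta$ yields
\begin{equation*}
\EE\,\JJJJ_\kappa(\theta\mP_1;\beta)=\frac{\JJJJ_\kappa(\mI_{1,n};\beta)\,\JJJJ_\kappa(\theta;\beta)}{\JJJJ_\kappa(\mI_n;\beta)}.
\end{equation*}

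At this point \eqref{J(I)} with $m=1$ kicks in: $\JJJJ_\kappa(\mI_{1,n};\beta)=0$ as soon as $\ell(\kappa)>1$, so in the sum over $\kappa\vdash k$ only the one-part partition $\kappa=(k)$ contributes. For that partition the $\beta$-Pochhammer symbol \eqref{Pochhamer2} collapses to the ordinary Pochhammer $(b)_{(k);\beta}=(b)_k$, giving the ratio $(\beta/2)_k/(n\beta/2)_k$. Combining these steps leads to the compact intermediate identity
\begin{equation*}
L_n(\theta)=\sum_{k=0}^{\infty}\frac{1}{k!}\,\frac{(\beta/2)_k}{(n\beta/2)_k}\,\JJJJ_{(k)}(\theta;\beta).
\end{equation*}

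The final step is bookkeeping: substitute \eqref{J2e} for $\JJJJ_{(k)}(\theta;\beta)$, cancel $k!$ and $(\beta/2)_k$ between the two factors, and rearrange to obtain the form displayed in \eqref{Projection}. The main obstacle, as usual with Jack-polynomial manipulations, is not any single step but keeping the Pochhammer and factorial prefactors straight through the substitution and making sure the outer sum over $k$ and the inner sum over compositions $\nu_1+2\nu_2+\cdots=k$ match the stated formula; a routine justification of termwise integration (dominated convergence, using the compactness of $\mathcal{K}_{n,\beta}$ and boundedness of $\mP_1$) legitimizes exchanging $\EE$ with the series in the first step.
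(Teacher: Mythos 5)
There is a basic mismatch here: the statement you were asked to prove is the proposition collecting the four Jack-polynomial identities \eqref{J-int}, \eqref{J-tr}, \eqref{J(I)} and \eqref{J2e}, but your proposal does not prove any of them. Instead it takes all four as given and uses them to compute the Laplace transform $L_n(\theta)$ of a rank-one random projection --- that is, it is a proof of Proposition \ref{P-P1}, a different result. Invoking \eqref{J-tr} to expand $\bigl(\Re\tr(\theta\mP_1)\bigr)^k$, \eqref{J-int} to integrate over $\mathcal{K}_{n,\beta}$, \eqref{J(I)} with $m=1$ to discard partitions of length greater than one, and \eqref{J2e} to pass to elementary symmetric functions cannot establish those identities: the argument presupposes them. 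So as a proof of the stated proposition there is nothing to assess --- the key content is entirely missing.

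Two remarks to put this in context. First, as a proof of Proposition \ref{P-P1} your plan is correct and coincides step for step with the paper's own argument (series expansion of the exponential, \eqref{J-tr}, \eqref{J-int} with $A=\mI_{1,n}$ and $B=\theta$, vanishing for $\ell(\kappa)>1$, then \eqref{J(I)} and \eqref{J2e}); the only point worth flagging there is that \eqref{J(I)} reduces to $(\beta/2)_k/(n\beta/2)_k$ only after checking that for the one-part partition the $\beta$-Pochhammer symbol \eqref{Pochhamer2} collapses to the ordinary one, which you do note. Second, the paper itself does not prove the four identities either: it quotes \eqref{J-int} from Godement and Hanlon--Stanley--Stembridge (see also Faraut--Koranyi), \eqref{J-tr} and \eqref{J2e} from Stanley's combinatorial theory of Jack polynomials, and the evaluation behind \eqref{J(I)} from James. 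A genuine proof of the statement would have to derive these facts --- for instance \eqref{J-tr} and \eqref{J2e} from the definition of $\JJJJ_\kappa(\theta;\beta)$ as a coefficient in the expansion of $\det(\mI_n-t_1\theta^{k_1}-\dots-t_r\theta^{k_r})^{-\beta/2}$, and \eqref{J-int} from the theory of spherical functions on symmetric cones --- none of which appears in your proposal.
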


Formula \eqref{J-int} appears  in \cite{Hanlon-Stanley-Stembridge-92}, where it is attributed to  \cite[(45)]{Godement:1952}, see also \cite[Corollary XI.3.2]{Faraut-Koranyi-94}.   %
 Formula \eqref{J-tr} appears in \cite[Proposition 2.3]{Stanley:1989}, see also \cite[page 234]{Faraut-Koranyi-94}.
Ref. \cite[(21)]{James:1964} gives $ \JJJJ_\kappa(\mI_m;1)$.
 Formula \eqref{J2e}  is \cite[Proposition 2.2(c)]{Stanley:1989}.

We are now ready to prove Proposition \ref{P-P1}.
\begin{proof}[Proof of Proposition \ref{P-P1}]Recall that  $\mP_1=\mU^*\mI_{1,n} \mU$.
From \eqref{J-tr},
$$
\E\exp\langle\theta|\mP_1\rangle=\sum_{k=0}^\infty \frac{1}{k!} \E \tr^k(\mU\theta \mU^*\mI_{1,n})=
\sum_{k=0}^\infty \frac{1}{k!} \sum_{\kappa\vdash k} \E  \JJJJ_\kappa(\mU\theta \mU^*\mI_{1,n};\beta)
\,.$$
By \eqref{J-int}, this becomes
$$
\sum_{k=0}^\infty \frac{1}{k!} \sum_{\kappa\vdash k}  \frac{ \JJJJ_\kappa(\theta ;\beta) \JJJJ_\kappa(\mI_{1,n};\beta)}{ \JJJJ_\kappa( \mI_n;\beta)}\,.
$$
Since $ \JJJJ_\kappa(\mI_{1,n};\beta)=0$ when $\ell(\kappa)>1$, this gives
$$
\E\exp\langle\theta|\mP_1\rangle=\sum_{k=0}^\infty \frac{1}{k!}  \frac{ \JJJJ_{(k)}(\theta ;\beta) \JJJJ_{(k)}(\mI_{1,n};\beta)}{ \JJJJ_{(k)}( \mI_n;\beta)}\,.
$$
Now  by \eqref{J(I)} this becomes
$$
\E\exp\langle\theta|\mP_1\rangle=\sum_{k=0}^\infty \frac{\left(\frac{\beta}{2}\right)_{k}}{k! \left(\frac{n\beta}{2}\right)_{k}}    \JJJJ_{(k)}(\theta ;\beta),
$$
and formula \eqref{J2e} gives the answer.

\end{proof}
\subsection{Connection with free probability}\label{Sect:CWFP}

The free probability analog of Meixner family was introduced by Anshelevich, Saitoh and Yoshida
\cite{Anshelevich01,Saitoh-Yoshida01}  as the orthogonality measure of certain orthogonal polynomials. These free
Meixner laws include the Kesten law (a special case of the free binomial law, which is the free additive
convolution of the Bernoulli law), the Marchenko-Pastur law (also called the  free-Poisson law), and the  Wigner
semi-circle law (a free probability  analog of the normal law). The free Meixner family of laws again appeared as
the laws characterized by a quadratic regression property in free probability \cite{Bozejko-Bryc-04}, in a class
of Markov processes, and as generating measures of Cauchy-Stieltjes kernel families with quadratic variance
function \cite{Bryc-06-08}. See also \cite{Anshelevich:2007},  \cite{Anshelevich:2008} and \cite[Theorem
4.3]{Bryc-Wesolowski-03}.

Except for the free binomial law, the remaining  free-Meixner laws are infinitely divisible with respect to free
additive convolution and appear as limit laws of large dimensional random matrices
\cite{Benaych-Georges04,Cabanal-Duvillard:2005}. However, this correspondence is based on  the Bercovici-Pata
bijection  \cite{Berkovici-Pata99}, which, as observed by Anshelevich \cite[page 241]{Anshelevich01}, %
is different
from the correspondence based on the kernel families or regression properties.

\subsection{Connection with Euclidean Jordan algebras}\label{Sect7} The natural framework of the Anshelevich
question is rather the one of the Euclidean Jordan algebras well described in the book of Faraut and Koranyi
\cite{Faraut-Koranyi-94}. The spaces $\HH_{n,\beta}$ for $\beta=1,2,4$ are the 3 more important types of
(irreducible) Euclidean Jordan algebras. But there are two more types to be considered: the exceptional 27
dimensional Albert algebra which can be roughly be considered as a space of (3,3) Hermitian matrices  on
octonions, and the Lorentz algebra on $\RR^2\times E$ where $E$ is a Euclidean space of dimension $d-2$. In this
algebra, the product of $(x_1,x_2,\vec{v})$ and $(y_1,y_2,\vec{w})$ is
$$(x_1y_1+\<\vec{v},\vec{w}\>,x_2y_2+\<\vec{v},\vec{w}\>,
\frac{1}{2}(y_1+y_2)\vec{v}+\frac{1}{2}(x_1+x_2)\vec{w})$$
which leads to the square of $(x_1,x_2,\vec{v})$ as $(x_1^2+\|v\|^2,x_1^2+\|v\|^2, 2(x_1+x_2)\vec{v})$. A good way
to memorize this product is to write formally $(x_1,x_2,\vec{v})$ as a $2\times 2$ matrix
$X=\left[\begin{array}{cc}x_1&\vec{v}\\^t \vec{v}&x_2\end{array}\right]$, doing the same with $Y$ for
$(y_1,y_2,\vec{w})$ and to consider the Jordan
product
$$X \cdot Y=\frac{1}{2}(XY+YX).$$
If $d=\beta+2$ with $\beta=1,2,4$  this algebra is $\HH_{2,\beta}.$
For a Jordan algebra $V,$ rotational invariance in $\HH_{2,\beta}$ is generalized to the invariance by a certain
compact group $K$ acting on $V$, as  described in Faraut Koranyi \cite[page 6 and 55]{Faraut-Koranyi-94}. The
Anshelevich problem can be solved for the Lorentz algebra for any $d\geq 1$ since the rank is 2, and the preceding
study for $\HH_{2,\beta}$ extends easily to this case. Since the consideration of the Lorentz and Albert algebras
can be seen as rather academic, we have refrained from placing %
our paper in this framework.

\subsection{Unresolved questions}

\begin{question} For fixed $A,B,C$ the research of the set $M(A,B,C)$ of the ensembles on $\mathbb{H}_{n,\beta}$ which satisfies $\mathbb{E}((\textbf{X}-\textbf{Y})^2|\textbf{S})=A\textbf{S}^2+B\textbf{S}+C\textbf{I}_n$ led for $n=1$ to the Laha-Lukacs study \cite{Laha-Lukacs60} which shows $M(A,B,C)$ is a natural exponential family. For $n\geq 2$ the structure of the $M(A,B,C)$ is not really understood: is it possible to pass easily from the knowledge of one element of $M(A,B,C)$ to a knowledge of the whole set as it is the case for an exponential family? One-parameter solutions are given in Remark \ref{Rem:Exp-Fam}. The answer is hidden in the structure of the PDE system.
\end{question}
\begin{question} The dimension of the set of solutions of the PDE systems from Theorem \ref{T1} is equal $4$ if $n=2$ and is unknown if $n\geq 3$.  The subset of solutions that are analytic at zero has dimension $3$ for $n=2$, and is unknown for $n\geq 3$.
A related system of PDEs in \cite{James:1955}, has a one-dimensional set of  solutions analytic at $0$.

\end{question}
\begin{question}
Since the free binomial law is well defined for all {\em real} values of $N\geq 1$, it would be interesting to determine the J\o rgensen set of the Bernoulli ensembles for $n\geq 3$.
\end{question}
\begin{question}For  $n\geq 3$, it is not known whether a Meixner ensemble with parameters $A=-1/(2N-1)$, $B=2N/(2N-1)$, $C=0$ is a binomial ensemble from Definition \ref{Def-Bin}.
Similarly, the converses to Propositions \ref{P.P0} and \ref{P.NB0} are not known  beyond dimension $n=2$ from Theorem \ref{T.U}.
\end{question}

\begin{question} For $n\geq 3$, do we have non-Gaussian ensembles on $\HH_{n,\beta}$ such that if $\mX,\mY$ are i.i.d. then
$\E((\mX-\mY)^2|\mX+\mY)=2\mI_n$?
\end{question}

\begin{question}The distribution of eigenvalues as the dimension $n\to\infty$ was studied in \cite{Cabanal-Duvillard:2005} for a special case
of Poisson ensembles. It is interesting to study this limit for other Meixner ensembles.
\end{question}

\subsection*{Acknowledgements}
The authors benefited from discussions with Michael Anshelevich, Philippe Biane,  Marek Bo\.zejko, Alban Quadrat,
and  Marc Yor. This research was  partially supported by NSF grant  DMS-0904720  and by Taft Research Seminar
2008-09.
The first author thanks  the Universit\'e Paul Sabatier and  the second author thanks the University of Cincinnati for  hospitality during the preparation of the paper.
The authors thank the referee for a thorough and helpful review.


\begin{thebibliography}{10}

\bibitem{abramowitz+stegun}
{\sc Abramowitz, M., and Stegun, I.~A.}
\newblock {\em Handbook of Mathematical Functions with Formulas, Graphs, and
  Mathematical Tables}.
\newblock Dover, New York, 1964.

\bibitem{Anderson-Guionnet-Zeitouni:2010}
{\sc Anderson, G., Guionnet, A., and Zeitouni, O.}
\newblock {\em An Introduction to Random Matrices}.
\newblock Cambridge University Press, 2010.

\bibitem{Anshelevich01}
{\sc Anshelevich, M.}
\newblock Free martingale polynomials.
\newblock {\em J. Funct. Anal. 201\/} (2003), 228--261.
\newblock arXiv:math.CO/0112194.

\bibitem{Anshelevich-06}
{\sc Anshelevich, M.}
\newblock Free {M}eixner distributions and random matrices, 2006.
\newblock http://web.mit.edu/sea06/agenda/talks/Anshelevich.pdf.

\bibitem{Anshelevich:2007}
{\sc Anshelevich, M.}
\newblock Free {M}eixner states.
\newblock {\em Comm. Math. Phys. 276}, 3 (2007), 863--899.

\bibitem{Anshelevich:2008}
{\sc Anshelevich, M.}
\newblock Orthogonal polynomials with a resolvent-type generating function.
\newblock {\em Trans. Amer. Math. Soc. 360}, 8 (2008), 4125--4143.

\bibitem{Arnaudies:1987}
{\sc Arnaudi{\`e}s, J.-M., and Fraysse, H.}
\newblock {\em Cours de math\'ematiques. 1}.
\newblock Dunod Universit\'e: Ouvrages de Base. [Dunod University: Basic
  Works]. Dunod, Paris, 1987.
\newblock Alg{\`e}bre. [Algebra].

\bibitem{Benaych-Georges04}
{\sc Benaych-Georges, F.}
\newblock Classical and free infinitely divisible distributions and random
  matrices.
\newblock {\em Ann. Probab. 33}, 3 (2005), 1134--1170.

\bibitem{Berkovici-Pata99}
{\sc Bercovici, H., and Pata, V.}
\newblock Stable laws and domains of attraction in free probability theory.
\newblock {\em Ann. of Math. (2) 149}, 3 (1999), 1023--1060.
\newblock With an appendix by Philippe Biane.

\bibitem{Bozejko-Bryc-04}
{\sc Bo\.zejko, M., and Bryc, W.}
\newblock On a class of free {L}\'evy laws related to a regression problem.
\newblock {\em J. Funct. Anal. 236\/} (2006), 59--77.
\newblock arxiv.org/abs/math.OA/0410601.

\bibitem{Bryc-06-08}
{\sc Bryc, W.}
\newblock Free exponential families as kernel families.
\newblock {\em Demonstr. Math. XLII}, 3 (2009), 657--672.
\newblock arxiv.org:math.PR:0601273.

\bibitem{Bryc-Wesolowski-03}
{\sc Bryc, W., and Weso{\l}owski, J.}
\newblock Conditional moments of $q$-{M}eixner processes.
\newblock {\em Probab. Theory Related Fields Fields 131\/} (2005), 415--441.
\newblock arxiv.org/abs/math.PR/0403016.

\bibitem{Cabanal-Duvillard:2005}
{\sc Cabanal-Duvillard, T.}
\newblock A matrix representation of the {B}ercovici-{P}ata bijection.
\newblock {\em Electron. J. Probab. 10\/} (2005), 632--661.

\bibitem{Casalis:1990}
{\sc Casalis, M.}
\newblock {\em Families exponentielles naturelles invariantes par un sous
  groupe affine}.
\newblock PhD thesis, Laboratoire de Statistique et Probabilit\'es,
  Universit\'e Paul Sabatier, Toulouse, France, 1990.

\bibitem{Casalis-Letac-96}
{\sc Casalis, M., and Letac, G.}
\newblock The {L}ukacs-{O}lkin-{R}ubin characterization of {W}ishart
  distributions on symmetric cones.
\newblock {\em Ann. Statist. 24}, 2 (1996), 763--786.

\bibitem{chikuse1986multivariate}
{\sc Chikuse, Y.}
\newblock Multivariate {M}eixner classes of invariant distributions.
\newblock {\em Linear Algebra Appl. 82\/} (1986), 177--200.

\bibitem{Faraut-Koranyi-94}
{\sc Faraut, J., and Kor{\'a}nyi, A.}
\newblock {\em Analysis on {S}ymmetric {C}ones}.
\newblock Oxford Mathematical Monographs. The Clarendon Press Oxford University
  Press, New York, 1994.
\newblock Oxford Science Publications.

\bibitem{Feinsilver:1986}
{\sc Feinsilver, P.}
\newblock Some classes of orthogonal polynomials associated with martingales.
\newblock {\em Proc. Amer. Math. Soc. 98}, 2 (1986), 298--302.

\bibitem{Godement:1952}
{\sc Godement, R.}
\newblock A theory of spherical functions. {I}.
\newblock {\em Trans. Amer. Math. Soc. 73}, 3 (1952), 496--556.

\bibitem{Hanlon-Stanley-Stembridge-92}
{\sc Hanlon, P.~J., Stanley, R.~P., and Stembridge, J.~R.}
\newblock Some combinatorial aspects of the spectra of normally distributed
  random matrices.
\newblock In {\em Hypergeometric functions on domains of positivity, Jack
  polynomials, and applications (Tampa, FL, 1991)}, vol.~138 of {\em Contemp.
  Math.} Amer. Math. Soc., Providence, RI, 1992, pp.~151--174.

\bibitem{Ismail-May78}
{\sc Ismail, M. E.~H., and May, C.~P.}
\newblock On a family of approximation operators.
\newblock {\em J. Math. Anal. Appl. 63}, 2 (1978), 446--462.

\bibitem{Jack:2006a}
{\sc Jack, H.}
\newblock A class of polynomials in search of a definition, or the symmetric
  group parametrized.
\newblock In {\em Jack, {H}all-{L}ittlewood and {M}acdonald polynomials},
  vol.~417 of {\em Contemp. Math.} Amer. Math. Soc., Providence, RI, 2006,
  pp.~75--106.

\bibitem{James:1955}
{\sc James, A.~T.}
\newblock A generating function for averages over the orthogonal group.
\newblock {\em Proc. Roy. Soc. London. Ser. A. 229}, 1178 (1955), 367--375.

\bibitem{James:1964}
{\sc James, A.~T.}
\newblock Distributions of matrix variates and latent roots derived from normal
  samples.
\newblock {\em Ann. Math. Statist. 35\/} (1964), 475--501.

\bibitem{Kagan-Linnik-Rao:1973}
{\sc Kagan, A.~M., Linnik, Y.~V., and Rao, C.~R.}
\newblock {\em Characterization Problems in Mathematical Statistics}.
\newblock John Wiley \& Sons, New York-London-Sydney, 1973.
\newblock Translated from the Russian by B. Ramachandran.

\bibitem{Koornwinder:2010}
{\sc {Koornwinder}, T.~H., and {Bouzeffour}, F.}
\newblock Nonsymmetric {A}skey-{W}ilson polynomials as vector-valued
  polynomials.
\newblock http://adsabs.harvard.edu/abs/2010arXiv1006.1140K, June 2010.

\bibitem{Laha-Lukacs60}
{\sc Laha, R.~G., and Lukacs, E.}
\newblock On a problem connected with quadratic regression.
\newblock {\em Biometrika 47}, 300 (1960), 335--343.

\bibitem{Letac:1989}
{\sc Letac, G.}
\newblock Le probl\`eme de la classification des familles exponentielles
  naturelles de {${\bf R}^d$} ayant une fonction variance quadratique.
\newblock In {\em Probability Measures on Groups, {IX} ({O}berwolfach, 1988)},
  vol.~1379 of {\em Lecture Notes in Math.} Springer, Berlin, 1989,
  pp.~192--216.

\bibitem{Letac:2007}
{\sc Letac, G.}
\newblock Uniform probability in the tetrahedron.
\newblock unpublished manuscript, October 2007.

\bibitem{Letac-Massam-98}
{\sc Letac, G., and Massam, H.}
\newblock Quadratic and inverse regressions for {W}ishart distributions.
\newblock {\em Ann. Statist. 26}, 2 (1998), 573--595.

\bibitem{Letac-Wesolowski:2008}
{\sc Letac, G., and Wesolowski, J.}
\newblock Laplace transforms which are negative powers of quadratic
  polynomials.
\newblock {\em Trans. Amer. Math. Soc. 360}, 12 (2008), 6475--6496.

\bibitem{Letac-Wesolowski:2010}
{\sc Letac, G., and Wesolowski, J.}
\newblock Why {J}ordan algebras are natural in statistics; quadratic regression
  implies {W}ishart distribution.
\newblock {\em Bull. Soc. Math. France\/} (to appear 2010).

\bibitem{Markushevich:1977}
{\sc Markushevich, A.~I.}
\newblock {\em Theory of Functions of a Complex Variable. {V}ol. {I}},
  {English}~ed.
\newblock Chelsea Publishing Co., New York, 1977.
\newblock Translated and edited by Richard A. Silverman.

\bibitem{Mehta91}
{\sc Mehta, M.~L.}
\newblock {\em Random Matrices}, second~ed.
\newblock Academic Press Inc., Boston, MA, 1991.

\bibitem{Meixner-40}
{\sc Meixner, J.}
\newblock {O}rthogonale {P}olynomsysteme mit einer besonderen {G}estalt der
  erzeugenden {F}unktion.
\newblock {\em J. London Math. Soc. 9\/} (1934), 6--13.

\bibitem{Morris82}
{\sc Morris, C.~N.}
\newblock Natural exponential families with quadratic variance functions.
\newblock {\em Ann. Statist. 10}, 1 (1982), 65--80.

\bibitem{Muirhead:1982}
{\sc Muirhead, R.~J.}
\newblock {\em Aspects of Multivariate Statistical Theory}.
\newblock John Wiley \& Sons Inc., New York, 1982.

\bibitem{Saitoh-Yoshida01}
{\sc Saitoh, N., and Yoshida, H.}
\newblock The infinite divisibility and orthogonal polynomials with a constant
  recursion formula in free probability theory.
\newblock {\em Probab. Math. Statist. 21}, 1 (2001), 159--170.

\bibitem{Schoenberg:1938a}
{\sc Schoenberg, I.~J.}
\newblock Metric spaces and completely monotone functions.
\newblock {\em Ann. of Math. (2) 39}, 4 (1938), 811--841.

\bibitem{Shanbhag:1972}
{\sc Shanbhag, D.~N.}
\newblock Some characterizations based on the {B}hattacharya matrix.
\newblock {\em J. Appl. Probab. 9\/} (1972), 580--587.

\bibitem{Shanbhag:1979}
{\sc Shanbhag, D.~N.}
\newblock Diagonality of the {B}hattacharyya matrix as a characterization.
\newblock {\em Teor. Veroyatnost. i Primenen. 24}, 2 (1979), 424--427.

\bibitem{Spitzer:1964}
{\sc Spitzer, F.}
\newblock {\em Principles of Random Walk}.
\newblock The University Series in Higher Mathematics. D. Van Nostrand Co.,
  Inc., Princeton, N.J.-Toronto-London, 1964.

\bibitem{Stanley:1989}
{\sc Stanley, R.~P.}
\newblock Some combinatorial properties of {J}ack symmetric functions.
\newblock {\em Adv. Math. 77}, 1 (1989), 76--115.

\bibitem{Watson:1944}
{\sc Watson, G.~N.}
\newblock {\em A Treatise on the Theory of {B}essel Functions}.
\newblock Cambridge University Press, Cambridge, England, 1944.

\end{thebibliography}
\end{document}